\newtheorem{thm}{Theorem}[section]
\newtheorem{prop}[thm]{Proposition}
\newtheorem{cor}[thm]{Corollary}
\newtheorem{lem}[thm]{Lemma}
\newtheorem*{sublem}{subLemma}
\newtheorem{const}[thm]{Constants}
\newtheorem*{claim}{Claim}
\newtheorem*{claim1}{Claim 1}
\newtheorem*{claim2}{Claim 2}
\newtheorem{conv}[thm]{Convention}
\theoremstyle{definition}
\newtheorem{defn}[thm]{Definition}
\theoremstyle{remark}
\newtheorem{obs}[thm]{Observation}
\newtheorem*{rem}{Remark}
\newcommand{\Gx }{\mathscr{G} (G, S)}
\newcommand{\GP }{(G, \mathcal P)}
\newcommand{\act}{\curvearrowright}
\newcommand{\pGf}{\partial_\lambda{G}}
\newcommand{\pG}{\Lambda G}
\newcommand{\pX}{\partial X}
\newcommand{\PS}[1]{\mathcal P_{#1}(s,o)}
\newcommand{\Ps}[1]{\mathcal P_{#1}(s,1)}
\newcommand{\g}[1]{\delta_{#1}}
\newcommand{\diam }[1]{{\textbf{diam}(#1)}}
\newcommand{\proj }{\mbox{Pr}}
\newcommand{\dirac}[1]{{\mbox{Dirac}}{(#1)}}
\newcommand{\len }{\ell}
\title[Purely exponential growth of cusp-uniform actions]{Purely exponential growth of cusp-uniform actions}
\author{Wen-yuan Yang}
\address{Beijing International Center for Mathematical Research \&
School of Mathematical Sciences, Peking University, Beijing, 100871,
P.R.China}
\email{yabziz@gmail.com}
\begin{document}

\subjclass[2000]{Primary 20F65, 20F67, 37D40}

\date{}

\dedicatory{}

\keywords{Purely exponential growth, relatively hyperbolic groups, cusp-uniform actions, Patterson-Sullivan
measures}

\begin{abstract}
Suppose that a countable group $G$ admits a cusp-uniform action on a hyperbolic space $(X,d)$ such that $G$ is of divergent type. The main result of the paper is characterizing the purely exponential growth type of the orbit growth function by  a condition introduced by Dal'bo-Otal-Peign\'e.  For geometrically finite Cartan-Hadamard manifolds with  pinched negative curvature this condition ensures the finiteness of Bowen-Margulis-Sullivan measures. In this case, our result recovers a theorem of Roblin (in a coarse form). Our main tool is the Patterson-Sullivan measures on the Gromov boundary of $X$, and a variant of the Sullivan shadow lemma called partial shadow lemma. This allows us to prove that the purely exponential growth of either cones, or partial cones or horoballs is also equivalent to the condition of Dal'bo-Otal-Peign\'e. These results are further used in the paper \cite{YANG7}.
\end{abstract}

\maketitle

\setcounter{tocdepth}{1} \tableofcontents

\section{Introduction}\label{Section1}

Suppose that a group $G$ admits a proper and isometric action on a proper geodesic
hyperbolic space $(X, d)$ such that $G$ does not fix a point in the Gromov boundary $\pX$. We are interested in studying the asymptotic feature of the $G$-orbits in $X$ via the Patterson-Sullivan measure on the boundary $\pX$. This is a recurring scheme in the setting of simply connected Cartan-Hadamard manifolds with pinched negative curvature. The novelty of the present paper is the generality of the results obtained for a class of cusp-uniform actions we explain now.

The \textit{limit set} $\Lambda (G)$ of $G$ is the set of accumulation points in $\pX$ of a $G$-orbit in $X$. 
It is well-known that $G$ acts minimally on $\Lambda (G)$ as a convergence group action. A point $p\in \Lambda(G)$ is called \textit{parabolic} if the stabilizer $G_p=\{g\in G: gp=p\}$ is infinite so that its limit set is $\{p\}$. Denote by $\mathcal P$ the set of
maximal parabolic subgroups in $G$. Let $\mathcal C(\Lambda G)$ be the \textit{convex-hull} of $\Lambda(G)$ that is the union of all bi-infinite geodesics with both endpoints in $\Lambda(G)$. It is known that $\mathcal C(\Lambda G)$ is a $G$-invariant quasiconvex subset. So $\mathcal C(\Lambda G)$ is itself a hyperbolic space.  

We consider the following definition stated in \cite{Hru}, generalizing the definition of a
geometrically finite Kleinian group in \cite{Marden}. 
\begin{defn}[Cusp-uniform action]\label{RHdefn}
Assume that there is a
$G$-invariant system of (open) horoballs $\mathbb U$ centered at
parabolic points of $G$ such that the action of $G$ on the complement
$$\mathcal C(\Lambda G) \setminus \mathcal U,\; \mathcal U := \cup_{U \in \mathbb U} U$$
is co-compact. Then the pair $\GP$ is said to be \textit{relatively
hyperbolic}, and the action of $G$ on $X$ is called
\textit{cusp-uniform}.
\end{defn}
By a compactness argument, we see that $\mathbb U$ contains only finitely many $G$-orbits. Consequently, the set $\mathcal P$ of maximal parabolic subgroups has only finitely many conjugacy classes. See \cite{Bow1} and \cite{Tukia} for more details.
\begin{rem}
\begin{enumerate}
\item
We emphasize that $G$ is not required to be finitely generated: since $\pX$ is metrizable, the group $G$ could be at most countable \cite[Corollary 7.1]{Ge1}. The free product of two non-finitely generated countable groups gives rise to a non-finitely generated cusp-uniform action.  
\item 
We do not request that $\pX=\Lambda G$ and consider using the convex hull of $\Lambda G$ instead. This allows us to include  examples of a geometrically finite Kleinian group acting on $\mathbb H^n$ $(n\ge 2)$ with $\Lambda G \subsetneq \mathbb S^{n-1}$. 
\end{enumerate}
\end{rem}

In fact, a group (pair) being relatively hyperbolic  
admits many equivalent formulations, for instance \cite{Farb}, \cite{Bow1},
\cite{DruSapir}, \cite{Osin}, \cite{Hru} and \cite{Ge1}.  Since we are interested in the asymptotic growth  of the orbits of a cusp-uniform action, the notion of a critical exponent shall be our focus. 

Choose a basepoint $o \in X$. Set $N(o, n):=\{g\in G: d(o, go)\le n\}$. Consider the Poincar\'e series for a subset $\Gamma \subset G$:
$$\PS{\Gamma} = \sum\limits_{g \in \Gamma} \exp(-sd(o, go)), \; s \ge 0.$$
Note that the \textit{critical exponent} of $\PS{G}$ is given by
$$\g \Gamma = \limsup\limits_{n \to \infty} \frac{\log \sharp(N(o, n)\cap \Gamma)}{n},$$
which is independent of the choice of $o \in X$. Observe that $\PS{\Gamma}$ diverges for $s<\g \Gamma$ and converges for $s>\g \Gamma$.
 
The group $G$ is of \textit{divergent type} (resp.
\textit{convergent type}) (with respect to the action of $G$ on $X$) if $\PS{G}$ is divergent (resp. convergent) at
$s=\g G$. Note that whether $\PS{G}$ is of divergent type does not depend on the choice of $o$.

\subsection{Patterson-Sullivan measures and the partial shadow lemma}  
The primary tool in the paper is the Patterson-Sullivan measures (PS-measures for shorthand) on the Gromov boundary of $X$. For discrete groups acting on $n$-dimensional hyperbolic spaces $(n\ge 2)$ the theory of PS-measures was established by Patterson \cite{Patt} for $n=2$ and generalized by Sullivan
\cite{Sul} in all dimensions. Furthermore, Sullivan gave a way to construct a flow-invariant measure on the unit tangent bundle for hyperbolic $n$-manifolds, which coincides with the Bowen-Margulis measures studied earlier. Sullivan's construction is very robust and applies in rather general settings, for instance in CAT(-1) spaces in \cite{Roblin}. Following  \cite{Roblin} we call this measure Bowen-Margulis-Sullivan measure  (BMS measure for short). If $X=\mathbb H^n$ ($n\ge 2$) is a real hyperbolic space, then the BMS measure is finite \cite[Theorem 3]{Sul2}. However, for Cartan-Hadamard manifolds with pinched negative curvature,  it was observed in \cite{Yue} that the finiteness of the BMS measure depends crucially on the geometry on cusps.

In what follows, we shall discuss in details three conditions with increasing generalities on parabolic subgroups.
 
By definition, a cusp-uniform action of $G$ on $X$ has a \textit{parabolic gap property} (PGP) if $\g
G > \g P$ for every maximal parabolic subgroup $P$. This property was introduced by Dal'bo-Otal-Peign\'e in \cite{DOP} to deduce  that $G$ is of divergent type. They also introduced in the same paper another condition which ensures the finiteness of the BMS measure for Cartan-Hadamard manifolds  with pinched negative curvature.  We formulate their condition in the setting of cusp-uniform actions.

\begin{defn}
Let $G$ admit a cusp-uniform action as above such that $\g G < \infty$. Then $G$ satisfies  a \textit{Dal'bo-Otal-Peign\'e (DOP) condition} if the following 
\begin{equation}\label{BMS}
\sum_{p\in P} d(o, po) \exp(-\g G d(o, po)) < \infty
\end{equation}
holds for every $P\in \mathcal P$ and $o\in X$.
\end{defn}
\begin{rem}
The condition (\ref{BMS})  depends only on conjugacy classes of $P\in \mathcal P$. In practice, it suffices to verify this condition for finitely many conjugacy classes in $\mathcal P$. Note also that (\ref{BMS}) does not depend on the choice of $o\in X$. 
\end{rem}

\begin{thm}\cite{DOP}\label{DOPthm}
Let $G$ admit a cusp-uniform action on a simply connected Cartan-Hadamard manifold $X$ with pinched negative curvature. Suppose that $G$ is of divergent type. Then the BMS measure is finite on the unit tangent bundle of $X/G$ if and only if $G$ satisfies the DOP condition. 
\end{thm}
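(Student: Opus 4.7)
The plan is to use Sullivan's construction of the Bowen--Margulis--Sullivan measure from the Patterson--Sullivan density on $\pX$ and then compute the BMS mass by decomposing the quotient $T^1(X/G)$ into a compact thick part and finitely many cusp regions, one per conjugacy class in $\mathcal P$. First I build, via a Patterson-type limit procedure, a $G$-equivariant conformal density $\{\mu_x\}_{x\in X}$ of dimension $\g G$ on $\pX$, and define a measure on $\pX\times\pX\setminus\mathrm{diag}$ by
$$d\tilde m(\xi,\eta)=\exp\bigl(2\g G\,(\xi\mid\eta)_o\bigr)\,d\mu_o(\xi)\,d\mu_o(\eta),$$
which is $G$-invariant by conformality. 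Identifying $T^1 X$ with $\pX\times\pX\setminus\mathrm{diag}$ times $\mathbb R$ (Hopf parametrization) and using Lebesgue measure on the $\mathbb R$-factor produces a geodesic-flow invariant measure descending to the BMS measure $m$ on $T^1(X/G)$, whose finiteness does not depend on $o$.

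Next, fix a $G$-invariant horoball system $\mathbb U$ coming from the cusp-uniform action. Because $G$ acts cocompactly on $\mathcal C(\Lambda G)\setminus \mathcal U$, the contribution of the thick part of $T^1(X/G)$ to $m$ is finite. It thus suffices to show that, for each $U\in\mathbb U$ centered at a parabolic fixed point $p$ with stabilizer $P\in\mathcal P$, the cusp mass $m(T^1(U)/P)$ is finite if and only if the series in (\ref{BMS}) converges for $P$.

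The core step is a \emph{partial shadow lemma} for horoballs. Writing $U_t$ for the sub-horoball obtained by pushing $\partial U$ inward by $t$, I would prove that the shadow $\mathcal{O}_t\subset \pX$ of $U_t$ seen from $o$ satisfies
$$\mu_o(\mathcal{O}_t)\;\asymp\;\sum_{\substack{p\in P\\ d(o,po)\ge 2t-C}}\exp(-\g G\,d(o,po)),$$
with $C$ depending only on $\mathbb U$. The idea is that the $P$-orbit of a fixed base on $\partial U$ tiles the horosphere up to bounded error; each parabolic translate $po$ contributes a definite shadow on $\pX$ of PS-mass $\asymp\exp(-\g G\,d(o,po))$ by the usual Sullivan shadow lemma applied in the thick part; and the Busemann identity $d(o,po)\simeq 2t$ when $po$ has depth $t$ inside $U$ converts the depth condition into a distance condition on $p$. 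Finally, the Hopf coordinates on $T^1(U)/P$ and Fubini give
$$m(T^1(U)/P)\;\asymp\;\int_0^\infty\mu_o(\mathcal{O}_t)\,dt\;\asymp\;\sum_{p\in P}d(o,po)\exp(-\g G\,d(o,po)),$$
since each orbit point $po$ contributes to $\mathcal O_t$ for $t$ in an interval of length $\asymp d(o,po)/2$. This is exactly the DOP sum. The divergent-type hypothesis is used to guarantee that the PS density is atom-free and unique up to scalar, so that $m$ is intrinsically defined.

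The main obstacle is establishing the partial shadow lemma. The ordinary shadow lemma degenerates inside horoballs because parabolic orbits accumulate radially at the fixed point and their shadows cease to be disjoint. To control this one has to group parabolic orbit points by their distance to $o$, quantify the overlap of their shadows using the exponential decay of the Busemann cocycle in pinched negative curvature, and simultaneously rule out concentration of $\mu_o$ at parabolic fixed points using divergence. All remaining steps are relatively routine quantitative bookkeeping.
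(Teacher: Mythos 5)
The paper does not actually prove this theorem: it is imported verbatim from Dal'bo--Otal--Peign\'e \cite{DOP}, so there is no internal proof to compare against, only the original source. Your outline --- the Hopf-parametrization construction of the BMS measure from the $\g G$-dimensional Patterson--Sullivan density, finiteness of the thick part by cocompactness, and reduction of each cusp mass to $\sum_{p\in P} d(o,po)\exp(-\g G\, d(o,po))$ via the horoball shadow estimate $\mu_o(\mathcal{O}_t)\asymp\sum_{d(o,po)\ge 2t-C}\exp(-\g G\, d(o,po))$ integrated over the depth $t$ --- is essentially the structure of their argument, and the step you single out as the main obstacle (the shadow lemma inside horoballs, which needs the atom-freeness at parabolic points coming from divergence) is indeed the technical heart of that proof.
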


It is obvious that if $G$ has the parabolic gap property then $G$ satisfies the DOP condition. They imply  the third one - \textit{a parabolic convergence property} (PCP): 
\begin{equation}\label{PCP}
\sum_{p\in P} \exp(-\g G d(o, po)) < \infty
\end{equation}
for every $P\in \mathcal P$ and $o\in X$. This property turns out to be always true: it is trivial for the convergent case; see Lemma \ref{convpara} for the divergent case. Moreover, the PCP property is one of crucial facts used to establish the partial shadow lemma below. 

On the other hand, there exist  examples of cusp-uniform actions of divergent type but without DOP condition, and satisfying the DOP condition but without PGP property. See \cite{Peigne} for these examples. The above three conditions  on parabolic groups will be important in further discussions. 

Generalizing Patterson and Sullivan's work, Coornaert  \cite{Coor} has established the theory of PS-measures
on the limit set of a discrete group acting on a $\delta$-hyperbolic
space. Assuming that $G$ is of divergent type, we first generalize Dal'bo-Otal-Peign\'e's results to the setting of cusp-uniform actions.
  
\begin{thm}[=Proposition \ref{PSMeasureX}]\label{ThmP} 
Let $G$ admit a cusp-uniform action on $X$ such that $\g G < \infty$ and $G$ is of divergent type.
Then the Patterson-Sullivan measure $\{\mu_v\}_{v\in G}$ on $\pX$ is a quasi-conformal
density without atoms. Moreover, $\{\mu_v\}_{v\in G}$ is unique and ergodic.
\end{thm}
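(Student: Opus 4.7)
The plan is to follow the classical Patterson–Sullivan scheme, adapted to cusp–uniform actions, and to do the work in four stages. First I would construct a candidate conformal density. Because $G$ is of divergent type, there is no need for Patterson's slowly varying correction: fix $s_n \searrow \g G$, form the normalized Dirac averages
\begin{equation*}
\mu_{o,n} \;=\; \frac{1}{\PS{G}}\sum_{g\in G} \exp(-s_n d(o,go))\,\dirac{go},
\end{equation*}
and extract a weak-$*$ subsequential limit $\mu_o$ on the compactification $X\cup\pX$. Divergence at $\g G$ forces all mass to escape to $\pX$, so $\mu_o$ is supported on $\Lambda G$. Setting $\mu_v := \lim \mu_{v,n}$ along the same subsequence produces a $G$-equivariant family, and a routine Gromov-product estimate (Coornaert's argument adapted to $\delta$-hyperbolic $X$) shows that the Radon–Nikodym derivatives are comparable to $\exp(-\g G\beta_\xi(v,o))$, giving a $\g G$-quasi-conformal density.

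The second and main stage is to rule out atoms in $\Lambda G$. The limit set is the disjoint union of the conical limit set and the (countable) set of parabolic fixed points, so it suffices to treat these two types separately. For a conical point $\xi$ one argues as in the classical setting: a sequence $g_n o \to \xi$ staying within bounded Hausdorff distance of a geodesic ray $[o,\xi)$ yields nested shadows whose $\mu_o$-masses are estimated from above by the Sullivan shadow lemma and add up to a divergent Poincaré tail, forcing $\mu_o(\{\xi\}) = 0$. For a parabolic fixed point $p$, I would show directly that
\begin{equation*}
\mu_o(\{p\}) \;\lesssim\; \sum_{h\in P}\exp(-\g G\, d(o,ho))
\end{equation*}
by approximating $\{p\}$ by shadows of horoball-based points; the conclusion $\mu_o(\{p\}) = 0$ then follows from the parabolic convergence property \eqref{PCP} asserted in Lemma \ref{convpara}. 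This is the step I expect to be the main obstacle, because controlling shadows near a parabolic fixed point requires a Beardon–Maskit-type decomposition of the orbit into parabolic cosets times bounded-length transversals, an estimate that in the cusp-uniform (as opposed to CAT($-1$)) setting has to be done via the partial shadow lemma the paper is about to prove.

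Finally, for uniqueness and ergodicity I would invoke a Hopf–Tsuji–Sullivan dichotomy. Given any other $\g G$-quasi-conformal density $\{\nu_v\}$ without atoms, the shadow lemma identifies both the $\mu_o$- and $\nu_o$-masses of a shadow $\mathrm{Sh}(go)$ with $\exp(-\g G\, d(o,go))$ up to a multiplicative constant; combining this with the divergence of the Poincaré series shows, via a standard covering argument on $\Lambda G$, that the Radon–Nikodym derivative $\mathrm d\nu_o/\mathrm d\mu_o$ is $G$-invariant almost everywhere. Divergence also yields ergodicity of the $G$-action on $(\Lambda G,\mu_o)$ by the same Borel–Cantelli type argument (conical points form a full $\mu_o$-measure set, and one shows any $G$-invariant measurable subset has $\mu_o$-measure $0$ or $1$ by covering it with shadows). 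Hence $\mathrm d\nu_o/\mathrm d\mu_o$ is constant, proving uniqueness up to scalar, and ergodicity is immediate. Throughout, the only genuinely new input beyond \cite{Coor} and \cite{Roblin} is handling parabolic points via the DOP/PCP framework introduced earlier in the section.
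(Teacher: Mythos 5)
Your overall architecture (Patterson construction without the auxiliary function, split of $\Lambda G$ into conical and bounded parabolic points by geometrical finiteness, PCP as the engine for killing parabolic atoms, Hopf--Tsuji--Sullivan for uniqueness and ergodicity) matches the paper's, and the conical-point step is fine: a conical point lies in infinitely many shadows $\Pi_r(g_no)$ with $d(o,g_no)\to\infty$, so the ordinary shadow lemma alone gives $\mu_o(\{\xi\})=0$; no summation over a divergent tail is even needed. The uniqueness/ergodicity part is only sketched, but the paper itself outsources that statement to \cite{YANG7}, so I will not press on it.

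There is, however, one genuine logical flaw in your treatment of parabolic points. You propose to obtain the bound $\mu_o(\{p\})\lesssim\sum_{h\in P}\exp(-\g G\,d(o,ho))$ ``via the partial shadow lemma the paper is about to prove.'' That is circular: the proof of the partial shadow lemma (Lemma \ref{PShadowX}) relies crucially on the fact that $\mu_1(\Lambda G_Y)=0$ (Lemma \ref{parabnoatom2}), i.e.\ precisely on the absence of atoms at parabolic points that you are trying to establish. The correct order of logic, and the one the paper follows, is Dal'bo--Otal--Peign\'e's: first prove Lemma \ref{convpara} (for any subgroup $H$ with $\Lambda H\subsetneq\Lambda G$, the series $\PS{H}$ converges at $s=\g G$) using only the quasi-conformal density property of the limit measure and a Borel fundamental domain for $H\curvearrowright\Lambda G\setminus\Lambda H$; then, for a parabolic $P=G_Y$, use the compact fundamental domain $K$ for $P$ acting on $(X\cup\pX)\setminus(Y\cup\Lambda Y)$ (Lemma \ref{cpctdomain}) to form shrinking neighborhoods $V_n=\bigcup_{d(o,po)>n}pK$ of $\Lambda Y$, and estimate $\mu_1^s(V_n)\prec\mu_1^s(K)\sum_{d(o,po)>n}\exp(-s\,d(o,po))$ at the level of the \emph{pre-limit} orbital measures, using the projection lemmas \ref{contracting} and \ref{projection} rather than any shadow lemma. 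Letting $s\to\g G$ and invoking the convergence from Lemma \ref{convpara} gives $\mu_1(\Lambda P)=0$. So your identification of PCP as the decisive input is right, but the mechanism you name for converting it into a measure estimate would not be available at that stage of the argument; you need the fundamental-domain estimate on the pre-limit measures instead.
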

\begin{rem}
The statement that $\{\mu_v\}_{v\in G}$ on $\pX$ is a quasi-conformal
density was proved by Coornaert \cite{Coor}. The new point here are the ``without atoms" and ``moreover" statements.
\end{rem}

In the theory of PS-measures, a key tool is the Sullivan Shadow
Lemma, which connects the geometry inside and the measure on
boundary.   We shall prove a variant of
the Shadow Lemma that holds for \textit{partial shadows}, which excludes a ``small'' set of points shadowed by a system of quasiconvex subsets. To make this idea precise, we need to introduce a technical definition.

Let $\mathbb U\subset \mathbb Y$ be a $G$-finite system of quasiconvex subsets with bounded intersection such that, for each $Y\in \mathbb Y\setminus \mathbb U$, the stabilizer $G_Y$ acts co-compactly on $Y$.  The notion of transition points was due to Hruska \cite{Hru} in the setting of Cayley graphs.  Here it is formulated in a general metric setting. 

\begin{defn}
For a path $p$ in $X$, a point $v \in p$ is called
an \textit{$(\epsilon, R)$-transition} point for $\epsilon, R \ge 0$
if the $R$-neighborhood $\displaystyle{p \cap B(v, R)}$ around $v$ in $p$ is not
contained in the $\epsilon$-neighborhood of any $Y \in \mathbb Y$.
\end{defn}

Hence, the \textit{partial shadow} $\Pi_{r,\epsilon, R}(go)$ at $g$
for $r \ge 0$ is the set of boundary points $\xi \in \pG$ such that
some geodesic $[o, \xi]$ intersects $B(go, r)$ and
contains an $(\epsilon, R)$-transition point $v$ in $B(go, 2R)$. Thanks to parabolic convergence property  and no atoms at parabolic points, the partial shadow lemma claims that the excluded points are negligible when $R$ is sufficiently large.  

\begin{lem}[=Lemma \ref{PShadowX}]\label{PSLem}
Under the assumption of Theorem \ref{ThmP}, there are constants $r_0,
\epsilon, R\ge 0$ such that the following holds
$$
\exp(-\g G d(o, go)) \prec \mu_1(\Pi_{r, \epsilon, R}(go)) \prec_r
\exp(-\g G d(o, go)) ,
$$
for any $g \in G$ and $r \ge r_0$.
\end{lem}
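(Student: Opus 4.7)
The plan is to prove the two inequalities separately; the upper bound follows readily by containment in the classical shadow, while the lower bound requires a careful accounting of the geodesics excluded from $\Pi_{r,\epsilon,R}(go)$.

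For the upper bound, observe that $\Pi_{r,\epsilon,R}(go) \subseteq \Pi_r(go)$, where $\Pi_r(go) := \{\xi \in \pX : [o,\xi] \cap B(go, r) \neq \emptyset\}$ is the classical Sullivan shadow. Theorem~\ref{ThmP} supplies a quasi-conformal density without atoms, which is precisely what one needs to run the Sullivan--Coornaert shadow lemma and conclude $\mu_1(\Pi_r(go)) \prec_r \exp(-\g G d(o, go))$.

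For the lower bound, first fix $r_0$ large enough that the classical shadow lemma also gives $\mu_1(\Pi_{r_0}(go)) \succ \exp(-\g G d(o, go))$. It then suffices to show that for $\epsilon$ small enough and $R$ large enough, the excluded set $E := \Pi_{r_0}(go) \setminus \Pi_{r_0,\epsilon,R}(go)$ has measure at most $\tfrac{1}{2}\mu_1(\Pi_{r_0}(go))$. By definition, every $\xi \in E$ admits a geodesic $[o,\xi]$ whose intersection with $B(go, 2R)$ lies entirely in the $\epsilon$-neighborhood of some $Y \in \mathbb Y$. I would split $E$ according to whether $Y$ is a horoball ($Y \in \mathbb U$) or a cocompactly stabilized quasiconvex subset ($Y \in \mathbb Y \setminus \mathbb U$). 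For the cocompact type, bounded intersection combined with $G$-finiteness leaves only boundedly many such $Y$ passing $\epsilon$-close to $B(go, 2R)$; each contributes a ``thin'' shadow whose quasi-conformal measure shrinks with $\epsilon$, making the total contribution negligible.

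The horoball contribution is the main obstacle. A geodesic staying $\epsilon$-close to some $U \in \mathbb U$ throughout $B(go, 2R)$ must enter $U$ deeply, so its boundary endpoint lies in a shadow centered at an orbit point $po$ for some $p$ in the parabolic stabilizer $P := G_U$, with $d(o, po) - d(o, go)$ growing linearly in $R$. Summing the classical shadow estimates $\mu_1(\Pi_{r'}(po)) \prec \exp(-\g G d(o, po))$ over the relevant $p \in P$ dominates this contribution by a tail of the parabolic series $\sum_{p\in P}\exp(-\g G d(o, po))$, which converges by the PCP (Lemma~\ref{convpara}); hence the tail is arbitrarily small once $R$ is large. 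The no-atoms statement of Theorem~\ref{ThmP} disposes of the degenerate case in which $\xi$ equals the parabolic fixed point of $U$ and $[o,\xi]$ never exits $U$. Combining both estimates for appropriate $\epsilon$ and $R$ yields $\mu_1(E) \le \tfrac{1}{2}\mu_1(\Pi_{r_0}(go))$, completing the proof.
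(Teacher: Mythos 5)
Your upper bound and your treatment of the horoball case are sound and match the paper's argument: the excluded endpoints land in shadows $\Pi_r(p\,o)$ with $d(o,po)-d(o,go)$ growing linearly in $R$, the resulting sum is a tail of the $G_U$-Poincar\'e series at $s=\g G$, that series converges by Lemma \ref{convpara}, and the no-atoms statement disposes of the parabolic fixed point itself. This is exactly how the paper controls the excluded set.

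The gap is in your treatment of $Y\in\mathbb Y\setminus\mathbb U$. You claim each such $Y$ contributes a ``thin'' shadow whose measure shrinks with $\epsilon$, but this is not available: $\epsilon$ cannot be taken arbitrarily small (the transition-point machinery of Lemma \ref{PairTransX} forces $\epsilon\ge 2\delta+\epsilon_0$ where $\epsilon_0$ is the quasiconvexity constant of the $Y$'s), and even for a fixed small $\epsilon$ the set of $\xi$ whose geodesic $[o,\xi]$ tracks $Y$ within $\epsilon$ for length $2R$ past $go$ has measure controlled by how far the geodesic travels along $Y$, not by $\epsilon$. If $Y$ is, say, the convex hull of the limit set of $E(h)$ for a hyperbolic element $h$, this set is essentially a union of shadows $\Pi_r(f t_Y o)$, $f\in G_Y$, whose total mass is a tail of the $G_Y$-Poincar\'e series --- small only because that series converges (again by Lemma \ref{convpara}, since bounded intersection forces $\Lambda Y\subsetneq\Lambda G$) and because $R$ is large. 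In other words, the non-horoball $Y$'s must be handled by \emph{the same} exit-point/tail-of-series argument you used for horoballs; there is no separate, cheaper estimate, and the paper indeed treats all $Y\in\mathbb Y$ uniformly (it also uses that $\mu_1(\Lambda G_Y)=0$ for these $Y$, from Lemma \ref{parabnoatom2}, to ensure the geodesic actually exits $N_\epsilon(Y)$). A secondary issue: the number of $Y$ meeting $N_\epsilon(B(go,2R))$ is not bounded independently of $R$; one must instead use the $(\epsilon,R)$-deep point lying in the fixed-radius ball $B(go,r)$ to restrict attention to the uniformly finite family $\{Y: Y\cap B(go,r+\epsilon)\ne\emptyset\}$.
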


We have pointed out that the PCP property holds automatically if $G$ is of convergent type. So the other essential ingredient is due to the divergence of $G$ implying that parabolic points (and  the limit set of $G_Y$ as above) are not charged at all by PS-measures (cf. Lemma \ref{parabnoatom2}).

We conclude  by giving some comments on the above partial things and the choice of a system $\mathbb Y$.  Usually, the properties of parabolic subgroups  play  a decisive role in establishing negatively-curved aspects of relatively hyperbolic groups. So it is common practice to take the effect of them in control to secure an analogous theory of hyperbolic groups.  In this regard, the notion of partial shadows appear to be a natural generalization of, and a useful complement to normal shadows. Furthermore, the choice of a strictly larger system $\mathbb U\subset \mathbb Y$ is essential feature being made use of in \cite{YANG7}. So in next subsection, we shall indeed deal with partial cones with respect to a general system $\mathbb U\subset \mathbb Y$ with applications towards \cite{YANG7} and future development.

\subsection{Characterizing   purely exponential type of growths}
This is the main contribution of this paper. The idea of using PS-measures to study growth
problems goes back to works of Patterson \cite{Patt} and Sullivan
\cite{Sul}, see \cite[Theorem 9]{Sul} for example. We now introduce several growth functions associated to orbits and horoballs before stating the main result.

For $\Delta \ge 0, n \ge 0$, consider the orbit in an annulus 
\begin{equation}\label{OrbitFunction}
A(go, n, \Delta) := \{h \in G:  n-\Delta \le d(o, ho) - d(o, go) <
n+\Delta\},
\end{equation}
for any $g \in G$.  

\paragraph{\textbf{Partial cones}} Corresponding to a partial shadow, a notion of a partial cone could be similarly defined with respect to a system $\mathbb Y$ of quasiconvex subsets with bounded intersection. 

The \textit{$r$-cone} $\Omega_r(go)$ at $go$ for $r\ge 0$ is the
set of elements $h \in G$ such that some geodesic $[o, ho]$
intersects $B(go, r)$. For $r, \epsilon, R > 0$, the notion of an \textit{$(\epsilon, R)$-partial $r$-cone}
$\Omega_{r, \epsilon, R}(go)$ at $go$ is defined similarly, by
demanding the existence of $(\epsilon, R)$-transition points on $[o,
ho]$ $2R$-close to $go$. See Section \ref{Section3} for precise definitions.

Consider the orbit in a cone:
\begin{equation}\label{ConeOrbitFunction}
\Omega_r(go, n, \Delta) := \Omega_r(go) \cap A(go, n, \Delta)
\end{equation}
and in a partial cone:
\begin{equation}\label{PConeOrbitFunction}
\Omega_{r, \epsilon, R}(go, n, \Delta) := \Omega_{r, \epsilon, R}(go)
\cap A(go, n, \Delta),
\end{equation}
for any $g \in G, n \ge 0$. 

\paragraph{\textbf{Purely Exponential growth}}
We say that the orbit growth of $G$ is of \textit{purely exponential type} if there exists $\Delta>0$ such that 
\begin{equation}\label{PExpGrowth}
\sharp A(o, n, \Delta) \asymp \exp(\g G n)
\end{equation}
for $n\ge 1$. If there exist $r, \epsilon, R, \Delta>0$ such that (\ref{PExpGrowth}) holds for $\sharp \Omega_r(go, n, \Delta)$ (resp. $\sharp\Omega_{r, \epsilon, R}(go, n, \Delta)$) then the orbit growth in cones (resp. partial cones) of $G$ is of \textit{purely exponential type}.

The main result of the paper is that the above growth functions of purely exponential type are all equivalent to the DOP condition. They are in fact also equivalent to the purely exponential growth of horoballs defined as follows.

Let $\mathbb U$ be the collection of horoballs in   definition \ref{RHdefn} of a cusp-uniform action. Consider 
\begin{equation}\label{HExpGrowth}
H(o, n, \Delta):=\{U\in \mathbb U:   \Delta \le d(o, U)-n  < \Delta \}.
\end{equation}
We say that the horoball growth of $G$ is of \textit{purely exponential type} if $$\sharp H(o, n, \Delta)\asymp \exp(\g G n)$$ for some $\Delta>0$. It is clear that the   equivalent type of the horoballs growth function  $\sharp H(o, n, \Delta)$ does not depend on the choice of $o$ and $\mathbb U$.

So our main theorem reads.
\begin{thm}\label{mainthm}
Suppose $G$ admits a cusp-uniform action on a proper hyperbolic space $(X,d)$ such that $0<\g G <\infty$ and $G$ is of divergent type. Then the following statements are equivalent:
\begin{enumerate}
\item
$G$ satisfies the DOP condition.  
\item
$G$ has the purely exponential orbit growth.
\item
$G$ has the purely exponential orbit growth in (partial) cones.
\item
$G$ has the purely exponential horoball growth.  
\end{enumerate}
\end{thm}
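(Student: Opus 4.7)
My plan is to prove the four conditions equivalent via the cycle $(1)\Rightarrow (3)\Rightarrow (2)\Rightarrow (1)$ together with $(1)\Rightarrow (4)\Rightarrow (1)$. The workhorse throughout is the partial shadow lemma (Lemma \ref{PSLem}) applied with the horoball system $\mathbb U$ or an enlargement $\mathbb Y\supset\mathbb U$, coupled with the ergodicity and no-atoms conclusions of Theorem \ref{ThmP}. Two geometric facts underlie everything: first, the partial shadows $\Pi_{r,\epsilon,R}(go)$ centered at distinct orbit points in an annulus have uniformly bounded overlap multiplicity, since the transition-point requirement forces geodesics realizing two overlapping partial shadows to fellow-travel and hence to have basepoints close to each other; second, by Lemma \ref{PSLem}, each mass $\mu(\Pi_{r,\epsilon,R}(go))$ is comparable to $\exp(-\g G d(o,go))$.

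\textbf{DOP implies the growth estimates.} For $(1)\Rightarrow (3)$ I would fix $g_0\in G$ and analyse the partial cone $\Omega_{r,\epsilon,R}(g_0 o,n,\Delta)$. Summing over $g$ in this partial cone, the bounded overlap yields
$$\#\Omega_{r,\epsilon,R}(g_0 o,n,\Delta)\cdot \exp(-\g G n)\prec 1,$$
which is the desired upper bound. For the lower bound, I would show that the collection of partial shadows at these $g$ covers a positive-$\mu$-measure ``shadow of the cone at $g_0 o$'' modulo a null set. The excluded boundary points are those $\xi\in\pX$ whose geodesic $[o,\xi]$ enters $B(g_0 o,r)$ but then stays trapped in a single horoball with no further transition-point escape; the DOP condition, combined with divergence and the no-atoms statement of Theorem \ref{ThmP}, forces this ``deeply parabolic'' set to be $\mu$-null. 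The implication $(3)\Rightarrow (2)$ follows by taking $g_0=1$, since $\Omega_{r,\epsilon,R}(o,n,\Delta)\subset A(o,n,\Delta)$ supplies the lower bound while the global overlap estimate supplies the matching upper bound. For $(1)\Rightarrow (4)$ I would identify horoballs at distance $\sim n$ from $o$ with cosets of maximal parabolic subgroups in a coarse annulus of parabolic points and apply the same covering argument to a Sullivan-style shadow lemma for horoballs, where the DOP condition enters as the finiteness of the natural normalizing mass $\sum_U d(o,U)\exp(-\g G d(o,U))$.

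\textbf{Converses and main obstacle.} For $(2)\Rightarrow (1)$ and $(4)\Rightarrow (1)$ I would argue by contraposition. If DOP fails for some $P\in\mathcal P$, the partial-shadow covering of $\pX$ must leave a set $E\subset\pX$ of positive $\mu$-measure uncovered, concentrated in horoball shadows. Running the counting argument on $\pX\setminus E$ alone then produces estimates for $\#A(o,n,\Delta)$ which, when matched with an independent lower bound on the ``horoball-trapped'' part of the orbit extracted directly from the divergent DOP tail $\sum_{p\in P} d(o,po)\exp(-\g G d(o,po))=\infty$, forces $\#A(o,n,\Delta)\exp(-\g G n)$ to oscillate between a finite upper bound and an unbounded lower bound along a subsequence, contradicting (2). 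The same scheme applies to $(4)$, where the horoball count itself is what must oscillate. The main obstacle I anticipate is precisely this ``DOP is necessary'' step, because the partial shadow lemma cleanly isolates the non-parabolic asymptotics but leaves the parabolic contribution untouched; quantifying how the parabolic part of the orbit detects DOP-failure, and showing that the two contributions cannot cancel, is the essential content of the converse direction and the place where the finer no-atoms information in Theorem \ref{ThmP} must be leveraged.
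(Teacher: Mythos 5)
Your forward direction $(1)\Rightarrow(3)$ has a genuine gap in the lower bound. You describe the boundary points excluded from the covering by annular partial shadows as those whose geodesic is ``trapped in a single horoball with no further transition-point escape,'' and you invoke DOP to make this set $\mu$-null. But points trapped forever in a horoball are exactly the (bounded) parabolic limit points, and those are already null by divergence alone (Lemma \ref{parabnoatom2}) --- no DOP needed. If that were really the excluded set, your argument would prove purely exponential growth for every divergent-type action, which is false: there are divergent actions without DOP (Peign\'e's examples), and by Corollary \ref{Roblin} these fail purely exponential growth. The set that actually must be removed at scale $n$ consists of \emph{conical} points whose geodesic is merely deep inside some horoball at distance $d(o,go)+n$ from $o$; for each fixed $n$ this set has positive measure, and the entire content of Section \ref{Section5} is the quantitative bound $\mathbf{HSha}(g,n,L)\prec \exp(-\g G d(o,go))\cdot\sum_{V}\sum_{j\ge 2L}\mathcal{A}_{V}(o_V,j,\Delta)$, obtained by layering the horoballs by depth $i$ (Lemma \ref{UnionSets}), counting $\prec\exp(i\g G)$ horoballs per layer, and summing tails of the parabolic Poincar\'e series. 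DOP is precisely the convergence $\sum_{j}\mathcal{A}_{V}(o_V,j,\Delta)<\infty$ (Lemma \ref{DOP}), which makes the tail a \emph{small} fraction of $\exp(-\g G d(o,go))$ for large $L$; it cannot be upgraded to ``null,'' and without this quantitative estimate the lower bound does not follow.

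Your converse mechanism is also broken. You propose to contradict $(2)$ by showing that failure of DOP forces $\sharp A(o,n,\Delta)\exp(-\g G n)$ to be unbounded along a subsequence; but this quantity is bounded above \emph{unconditionally} by Coornaert's estimate (Lemma \ref{ballgrowthII}), which requires only the existence of a $\g G$-dimensional quasi-conformal density. Purely exponential growth can only fail through the lower bound, so no contradiction of the kind you describe is available. The paper's route is direct rather than by contraposition: $(2)\Rightarrow(4)$ is a separate combinatorial argument (Proposition \ref{orbit=horoball}) using only the automatic convergence of the parabolic Poincar\'e series at $\g G$, and then $(4)\Rightarrow(1)$ reads the measure decomposition at $g=1$ in the reverse direction --- purely exponential horoball growth yields $\mathbf{HSha}(1,n,L)\succ\sum_{j= 2L}^{n}\mathcal{A}_{V}(o_V,j,\Delta)$ (Lemma \ref{Uexpdecay}(2)), and since the left-hand side is bounded uniformly in $n$ by the total mass $\mu_1(\Pi_{2\tilde r}(o))\prec 1$, letting $n\to\infty$ forces convergence of the DOP series. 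You would need to supply something equivalent to this reverse inequality; the oscillation argument as stated cannot work.
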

\begin{rem} (on the proof)
It is possible that $\g G=\infty$, see Example 1 in \cite[Section 3.4]{GabPau}. The direction ``$(4)\Rightarrow(2)$" follows by definition of cusp-uniform actions (Lemma \ref{horoball=orbit});  ``$(2)\Rightarrow(4)$" is proved in Section \ref{Section4} by using the PCP property. The main direction is to prove $``(1)\Rightarrow(2), (3)"$ in Section \ref{Section5}, along the way the converse direction could be a bit easier established.   The refinement in partial cones of $``(3)"$ is due to the partial shadow lemma. 
\end{rem}

The following corollary follows from Theorem \ref{DOPthm}, recovering a result of Roblin in the setting of CAT(-1) spaces. 
\begin{cor}\cite[Th\'eor\`eme 4.1]{Roblin}\label{Roblin}
Suppose that $G$ admits a cusp-uniform action on a simply connected Riemannian manifold with pinched negative curvature such that $G$ is of divergent type. Then $G$ has finite BMS measure if and only if the orbit growth   of $G$ is purely exponential. 
\end{cor}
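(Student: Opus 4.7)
The plan is immediate: this corollary is a formal consequence of chaining together Theorem \ref{DOPthm} with the equivalence $(1)\Leftrightarrow(2)$ in Theorem \ref{mainthm}. Both hypotheses of the latter must be verified in the Riemannian setting; once done, the rest is a one-line composition.

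First I would check that Theorem \ref{mainthm} applies. A simply connected Riemannian manifold with pinched negative curvature $-b^2 \le K \le -a^2 < 0$ is a proper, CAT($-a$), hence Gromov hyperbolic, geodesic space, so the ``proper hyperbolic space $(X,d)$'' hypothesis is satisfied. The divergence type assumption is given. It remains to verify that $0 < \g G < \infty$. Finiteness follows from the upper curvature bound: the Bishop--Gromov comparison gives $\mathrm{vol}(B(o,n)) \preceq \exp(2bn)$, and a standard packing argument comparing the orbit to a disjoint union of small balls yields $\sharp N(o,n) \preceq \exp(2bn)$, hence $\g G \le 2b < \infty$. Positivity follows from the fact that a cusp-uniform action is necessarily non-elementary (the limit set contains parabolic points and is perfect), so $G$ contains a loxodromic isometry $g$, and then $\g G \ge \g{\langle g\rangle} > 0$.

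With these verifications, Theorem \ref{mainthm} gives the equivalence
\[
G \text{ satisfies the DOP condition} \;\Longleftrightarrow\; G \text{ has purely exponential orbit growth}.
\]
On the other hand, Theorem \ref{DOPthm} of Dal'bo--Otal--Peign\'e, whose hypotheses are exactly the ones in the statement of the corollary, gives
\[
G \text{ has finite BMS measure on } T^1(X/G) \;\Longleftrightarrow\; G \text{ satisfies the DOP condition}.
\]
Concatenating these two equivalences yields the desired statement: finite BMS measure is equivalent to purely exponential orbit growth. No further work is needed, and there is no real obstacle; the only point that requires a moment of thought is the verification $0 < \g G < \infty$, which is automatic in pinched negative curvature but must be pointed out because Theorem \ref{mainthm} is stated under this standing assumption (the remark following it notes that $\g G = \infty$ can indeed occur for more general cusp-uniform actions).
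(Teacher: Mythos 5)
Your overall route is exactly the paper's: the corollary is obtained by concatenating Theorem \ref{DOPthm} (finite BMS measure $\Leftrightarrow$ DOP condition) with the equivalence $(1)\Leftrightarrow(2)$ of Theorem \ref{mainthm}, and the paper offers no more than this one-line composition. Your extra care in checking the standing hypothesis $0<\g G<\infty$ is welcome, and the finiteness half is fine. However, your positivity argument is wrong as stated: for a loxodromic isometry $g$ the orbit $\langle g\rangle o$ grows \emph{linearly}, so $\sharp\bigl(N(o,n)\cap\langle g\rangle\bigr)\asymp n$ and $\g{\langle g\rangle}=0$, not $>0$. To get $\g G>0$ you should instead invoke the fact that a non-elementary discrete group (which a cusp-uniform $G$ is, since $\sharp \Lambda G>3$) contains a quasi-isometrically embedded free subgroup of rank two (a Schottky subgroup generated by high powers of two independent loxodromics), whose orbit grows exponentially; this gives $\g G\ge \g F>0$. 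With that correction the verification is complete and the rest of your argument is the paper's own.
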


The following corollary, in particular, the item (3)  of Theorem \ref{mainthm} will be used in \cite[Section 7]{YANG7}. 

\begin{cor}\label{ThmG} Suppose that $G$ satisfies the DOP condition or the stronger PGP property. There exist $r, \epsilon, R, \Delta >0$ such that the following holds  for any $n \ge 0$.

\begin{enumerate}
\item
$\sharp A(o, n, \Delta) \asymp \exp(n\g G)$.
\item
$\sharp (\Omega_{r}(go, n, \Delta) \asymp \exp(n\g
G)$ for any $g \in G$.
\item
$\sharp (\Omega_{r, \epsilon, R}(go, n, \Delta) \asymp \exp(n\g
G)$ for any $g \in G$.

\end{enumerate}
\end{cor}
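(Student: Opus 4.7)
The plan is to deduce the corollary directly from Theorem \ref{mainthm} combined with the partial shadow lemma (Lemma \ref{PSLem}). Since the PGP property implies the DOP condition, either hypothesis places us inside the scope of the main theorem, so $G$ has purely exponential growth of orbits, of cones, and of partial cones. Item (1) of the corollary is exactly item (2) of Theorem \ref{mainthm}, so nothing new is required there.

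For items (2) and (3) the real content beyond the main theorem is \emph{uniformity} of the implicit multiplicative constants as $g$ ranges over $G$. The key input is Lemma \ref{PSLem}, which furnishes
$$\mu_1\bigl(\Pi_{r,\epsilon,R}(go)\bigr)\;\asymp_r\; \exp(-\g G\, d(o,go))$$
with constants depending only on $r$ and the fixed geometric data, not on $g$. I would establish the upper bound in (3) by summing this estimate over $h\in\Omega_{r,\epsilon,R}(go, n, \Delta)$: each partial shadow $\Pi_{r,\epsilon,R}(ho)$ sits, up to enlarging constants, inside an ordinary shadow at $go$, while distinct partial shadows of elements at comparable distance from $o$ have uniformly bounded multiplicity on the Gromov boundary. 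The resulting estimate
$$\sharp\,\Omega_{r,\epsilon,R}(go, n, \Delta)\cdot \exp\bigl(-\g G(d(o,go)+n)\bigr)\;\prec\; \exp(-\g G\, d(o,go))$$
collapses to $\sharp\,\Omega_{r,\epsilon,R}(go, n, \Delta)\prec \exp(\g G n)$ with constant independent of $g$.

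The matching lower bound comes from showing that the partial shadows at $ho$, for $h$ in the annular partial cone, jointly cover a definite fraction of the shadow at $go$. This is the geometric heart of the direction $(1)\Rightarrow(3)$ in Theorem \ref{mainthm}; once the covering is in hand, its proportion is controlled by the DOP condition through the same Poincar\'e-series computation used in the main theorem, and the argument is automatically uniform in $g$ because all the distance and shadow estimates involved are translation-invariant. Item (2) then follows from (3) by the monotonicity $\Omega_{r,\epsilon,R}(go,n,\Delta)\subset \Omega_r(go,n,\Delta)$, combined with the easier classical shadow upper bound for $\Omega_r(go,n,\Delta)$.

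The principal obstacle is the uniform bounded-multiplicity assertion for partial shadows of elements at comparable distance from $o$, which is exactly what keeps the upper bound independent of $g$. It should follow from hyperbolicity of $X$ together with the bounded-intersection property of the system $\mathbb Y$ built into the definition of transition points, and is essentially the same geometric ingredient that underlies Lemma \ref{PSLem}; making it quantitative here is the only genuinely new step beyond assembling Theorem \ref{mainthm} with its proof.
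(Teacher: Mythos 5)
Your proposal is correct and follows essentially the same route as the paper: Corollary \ref{ThmG} is obtained there precisely by combining Theorem \ref{mainthm} with the partial shadow lemma, the containment of shadows $\Pi_r(ho)\subset\Pi_{2\tilde r}(go)$, and the bounded-multiplicity (evenly covering) property of shadows over an annulus. The one step you flag as "genuinely new" --- uniform bounded multiplicity --- is in fact Coornaert's known result (\cite[Lemme 6.5]{Coor}, quoted as the Claim in Proposition \ref{ApproxShadowWithMeasure}), and since partial shadows are subsets of ordinary shadows it applies verbatim, so no new work is needed there.
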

\begin{rem}
In hyperbolic groups, the statements (1) and (2) were previously known in
\cite[Th\'eor\`eme 7.2]{Coor} and in \cite[Lemma 4]{AL} respectively.
Without any assumption on the group $G$, we obtain
analogous results for word metrics on a relatively hyperbolic group $G$ in \cite{YANG7}.  
\end{rem} 
 
It is worth pointing out that, in \cite[Th\'eor\`eme 4.1]{Roblin},  a precise asymptotic formula of $\sharp A(go, n, \Delta)$ was obtained instead of a bi-Lipschitz inequality in Corollary \ref{Roblin}. However, Corollary \ref{ThmG} is sharp in our context:  there exists a generating set $S$ of $G=PSL(2, \mathbb Z)$ acting on $\Gx$ such that the limit of $\displaystyle\frac{\sharp A(go, n, \Delta)}{\exp(\g G n)}$ does not exist. See \cite[Section 3]{GriH} for related discussions.  On the other hand, in many real applications, a coarse formula of $\sharp A(go, n, \Delta)$ works equally well so the generality of our result allows potential applications in a broader setting, for instance, of coarse geometry.
 
\subsection{Organization of paper}
Section \ref{Section2} discusses some dynamical properties  of a cusp-uniform action on boundaries and a notion of transition points relative to a contracting system. Theorem \ref{ThmP} is proved in Section \ref{Section3}, which is used to show the partial shadow lemma. Sections \ref{Section4} and \ref{Section5} prepare necessary ingredients in the proof of Theorem \ref{mainthm} in Section \ref{Section6}.   

The large portion of this paper is essentially an extraction from a previous version of the paper \cite{YANG7}, where Corollary \ref{ThmG} was proved under the assumption that $G$ has the parabolic gap property. Some results in Section \ref{Section3} found the analogous ones in \cite{YANG7}. We, however, include the detailed proof to address the differences and also make this paper independent.
 
\ack  
The author is grateful to Marc Peign\'e for a helpful conversation in 2013 indicating him the results of T. Roblin, which leads eventually to Theorem \ref{mainthm}. Thanks go to the referee for many useful remarks and suggestions on improving the exposition. 

\section{Preliminaries}\label{Section2}

\subsection{Notations and Conventions}

Let $(X, d)$ be a geodesic metric space. We collect some notations
and conventions used globally in the paper.
\begin{enumerate}
\item
$B(x, r) := \{y: d(x, y) \le r\}$ and $N_r(A): = \{y \in X: d(y, A) \le r \}$ for a subset $A$ in $X$.
\item
$\diam {A}$ denotes the diameter of $A$ with respect to $d$.
\item
All the paths we consider
are rectifiable. Let $p$ be a rectifiable path  in $X$ with arc-length parametrization.  Denote by $\len (p)$ the length
of $p$.  Then $p$ goes from  the initial endpoint $p_-$ to the terminal endpoint $p_+$.  Let $x, y \in p$ be two points which are given by parametrization. Then denote by $[x,y]_p$ the parametrized
subpath of $p$ going from $x$ to $y$.
\item
Given a property (P), a point $z$ on $p$ is
called the \textit{first point} satisfying (P) if $z$ is among the
points $w$ on $p$ with the property (P) such that $\len([p_-, w]_p)$
is minimal. The \textit{last point} satisfying (P) is defined in a
similar way.

\item
For $x, y \in X$, denote by $[x, y]$ a geodesic $p$ in $X$ with
$p_-=x, p_+=y$. Note that the geodesic between two points is usually
not unique. But the ambiguity of $[x, y]$ is usually made clear or
does not matter in the context.

\item
Let $p$ a path and $Y$ be a closed subset in $X$ such that $p \cap Y
\neq \emptyset$. So the \textit{entry} and \textit{exit} points of
$p$ in $Y$ are defined to be the first and last points $z$ in $p$
respectively such that $z$ lies in $Y$.


\item
Let $f, g$ be two real-valued functions with domain understood in
the context. Then $f \prec_{c_1, c_2, \cdots, c_n} g$ means that
there is a constant $C >0$ depending on parameters $c_i$ such that
$f < Cg$.  And $f \succ_{c_1, c_2, \cdots, c_n} g,  f \asymp_{c_1,
c_2, \cdots, c_n} g$ are used in a similar way.  
\end{enumerate}

\subsection{Hyperbolic spaces}\label{HypSSection}
We briefly recall some basics of hyperbolic spaces and direct the reader to \cite{Gro}\cite{GH} for more details.
Recall that in a geodesic triangle, two points $x,  y$ in sides $p$ and $q$ respectively are
called \textit{congruent} if $d(x, o)=d(y, o)$ where $o$ is the
common endpoint of $p$ and $q$. Define the Gromov product $(x, y)_z=(d(x, z)+d(y, z)-d(x, y))/2$ for $x, y, z\in X$. We make use of the following definition of hyperbolic spaces.
\begin{defn}
A geodesic space $(X, d)$ is called \textit{$\delta$-hyperbolic} for $\delta \ge 0$ if any geodesic triangle is \textit{$\delta$-thin}: let $p, q$ be any two sides such that $o:=p_-=q_-$. Then a point $x$ in $p$ such that $d(x, p_-)\le (p_+,q_+)_o$ is $\delta$-close to a congruent point in $q$. 
\end{defn}
 \begin{rem}
Roughly, the Gromov product $(p_+,q_+)_o$ is the \textit{fellow-travel} time of $p$ and $q$ staying within $\delta$-neighbourhoods before diverging rapidly in opposite directions. It measures also the distance from $o$ to the side $[p_+, q_+]$, up to a bounded error.
 \end{rem}

A useful corollary follows from $\delta$-hyperbolicity. 
\begin{lem}\label{thintri}
Let $p, q$ be two geodesics such that $p_-=q_-$ then any point $x\in p$ with $d(x, p_+)>d(p_+, p_+)$ has at most $\delta$-distance to $q$.  
\end{lem}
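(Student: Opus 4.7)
The statement I read is the standard fellow-travelling corollary of the thin-triangle axiom: for the triangle with sides $p$, $q$, and $[p_+,q_+]$, any point on $p$ that lies in the ``pre-bifurcation'' segment (i.e.\ at distance at most $(p_+,q_+)_o$ from the shared origin $o:=p_-=q_-$) is within $\delta$ of a congruent point on $q$. So the plan is to translate the hypothesis $d(x,p_+)>d(p_+,q_+)$ into the Gromov-product inequality $d(o,x)\le (p_+,q_+)_o$ and then invoke the $\delta$-thin definition verbatim.

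Concretely, I would proceed in two short steps. First, using the triangle inequality $d(o,p_+)\le d(o,q_+)+d(p_+,q_+)$, I would bound the Gromov product from below:
\[
(p_+,q_+)_o=\tfrac{1}{2}\bigl(d(o,p_+)+d(o,q_+)-d(p_+,q_+)\bigr)\ge d(o,p_+)-d(p_+,q_+).
\]
Second, since $x$ lies on the geodesic $p$ emanating from $o$, we have $d(o,x)=d(o,p_+)-d(x,p_+)$, and the hypothesis $d(x,p_+)>d(p_+,q_+)$ immediately gives
\[
d(o,x)<d(o,p_+)-d(p_+,q_+)\le (p_+,q_+)_o.
\]
Applying the $\delta$-hyperbolic thin-triangle condition to the triangle $(o,p_+,q_+)$ with the side $p$ in the role of the first side, the point $x\in p$ with $d(x,p_-)\le (p_+,q_+)_o$ is $\delta$-close to its congruent point on $q$, in particular $d(x,q)\le \delta$.

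I do not anticipate any real obstacle: this is pure bookkeeping of Gromov products and congruent-point transfer. The only point worth being explicit about is the (evident) typo $d(p_+,p_+)$ in the statement, which I read as $d(p_+,q_+)$; with that reading the argument above is a two-line consequence of the definition already recalled in the subsection.
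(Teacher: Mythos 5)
Your proof is correct and is exactly the intended derivation: the paper states this lemma without proof as an immediate consequence of its definition of $\delta$-thinness, and your two-step reduction (bounding $(p_+,q_+)_o\ge d(o,p_+)-d(p_+,q_+)$ by the triangle inequality, then translating the hypothesis into $d(o,x)\le (p_+,q_+)_o$) is the standard bookkeeping that fills it in. Your reading of the typo $d(p_+,p_+)$ as $d(p_+,q_+)$ is also the right one, consistent with how the lemma is invoked later (e.g.\ in Lemma \ref{PairTransX}).
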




\paragraph{\textbf{Gromov Boundary and Visual metric}} A proper $\delta$-hyperbolic space can be naturally compactified by the \textit{Gromov boundary} $\pX$ which is the set of  {asymptotic} geodesic rays: two rays are \textit{asymptotic} if they have finite Hausdorff distance. By $\delta$-thin triangle property, we deduce that for any $\xi\ne \zeta \in \pX$, there exists a bi-infinite geodesic $\gamma$ such that two half rays are asymptotic to $\xi, \zeta$ respectively.  The choice of $\gamma$ might not be unique, but different ones have a uniform Hausdorff distance depending on $\delta$ only.   We thus define $d(o, [\xi, \zeta])$ to be the distance from $o$ to any geodesic $\gamma$, up to a bounded error. In a geometric sense, the value $d(o, [\xi, \zeta])$ measures the fellow-travel time of $[o, \xi]$ and $[o, \zeta]$ before departing in opposite directions.

We fix a basepoint $o$ and a small parameter $a>0$ close to $0$. A \textit{visual metric} $\rho_{o, a}$ could be constructed on $\pX$ such that the following holds
$$
\rho_{o, a}(\xi, \zeta) \asymp \exp(-a \cdot d(o, [\xi, \zeta]))
$$  
for any $\xi\ne \zeta \in \pX$.

In a $\delta$-hyperbolic space, we could define the \textit{limit set} $\Lambda Y$ of an unbounded subset $Y$ to be the intersection of $\pX$ with the topological closure of $Y$ in $X\cup \pX$. The topological closure of $Y$ in $X$ shall be denoted by $\partial Y$.  The \textit{limit set} of a subgroup $H$ is defined as that of any $H$-orbit in $X$.

\subsection{Dynamical formulation of cusp-uniform actions}
Sometimes, it is convenient to take a dynamical point of view to study the cusp-uniform action. We take \cite{Tukia}, \cite{Bow2} as general references for these dynamical notions.

\begin{defn}  
Let $T$ be a compact metrizable space on which a group $G$ acts by
homeomorphisms.  
\begin{enumerate}
\item
The action of $G$ on $T$ is a \textit{convergence group action} if the induced
group action of $G$ on the space of distinct triples over $T$ is
proper. The \textit{limit set} $\Lambda(\Gamma)$ of a subgroup $\Gamma
\subset G$ is the set of accumulation points of all $\Gamma$-orbits
in $T$. 
 
\item
A point $\xi \in T$ is called \textit{conical} if there are a
sequence of elements $g_n \in G$ and a pair of distinct points $a, b
\in T$ such that the following holds
$$g_n(\xi, \zeta) \to (a, b),$$
for any $\zeta \in T \setminus \xi$. The set of all conical points is denoted by $\Lambda^cG$.
\item
A point $\xi \in T$ is called \textit{bounded parabolic} if the
stabilizer $G_\xi$ of $\xi$ is infinite, and acts properly
and cocompactly on $T\setminus \xi$. 
\item
A convergence group action of $G$ on $T$ is called \textit{geometrically
finite} if every point $\xi \in T$ is either a conical point or a
bounded parabolic point.
\end{enumerate}
\end{defn}
Assume that $G$ has a cusp-uniform action on a proper hyperbolic space $(X, d)$. It is well-known that $G$ acts as a convergence group action on $\pX$.  The limit set of a subgroup $H$ in a convergence group sense coincides with the limit set of $Ho$ in $X$ for any $o\in X$. Moreover, the cusp-uniform action is the same as the geometrically finite convergence action in the following sense. 
\begin{thm}\cite{Bow1}\cite{Yaman}
If $G$ admits a cusp-uniform action on a proper hyperbolic space $(X, d)$, then $G$ acts geometrically finitely on $\Lambda G \subset \pX$. Conversely, if $G$ acts geometrically finitely on $T$ then $G$ admits a cusp-uniform action on a proper hyperbolic space $(X, d)$ such that $T=\pX$.
\end{thm}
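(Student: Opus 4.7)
The plan is to prove the two directions by quite different methods. For the forward direction, suppose $G$ acts cusp-uniformly on the proper hyperbolic space $(X,d)$ with horoball system $\mathbb U$. The convergence property of the action on $\pX$ is standard from $\delta$-hyperbolicity: if $g_n\in G$ satisfies $g_n o\to a$ and $g_n^{-1}o\to b$ in $\pX$, then $g_n\xi\to a$ for every $\xi\ne b$, by the north--south dynamics of isometries on the boundary. Restricting to the closed $G$-invariant subset $\pG\subset\pX$ preserves this.

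To classify each $\xi\in\pG$ as conical or bounded parabolic, I would fix a geodesic ray $\gamma=[o,\xi]\subset\mathcal C(\pG)$ and track its interaction with the horoballs. There are two cases. Either $\gamma$ visits the co-compact region $\mathcal C(\pG)\setminus\mathcal U$ infinitely often; then a standard extraction yields $g_n\in G$ with $d(g_no,\gamma)$ uniformly bounded, and the corresponding $g_n^{-1}$ sequence produces the asymmetric limits $(a,b)$ required in the definition of a conical point. Or $\gamma$ is eventually trapped in a single horoball $U\in\mathbb U$, which by the very definition of a horoball forces $\xi$ to be the parabolic apex $p$ of $U$. For such $p$, certifying that it is bounded parabolic amounts to projecting $\pG\setminus\{p\}$ to the horosphere $\partial U$ via horofunction gradients and invoking the co-compactness of the $G_p$-action on $\partial U\cap(\mathcal C(\pG)\setminus\mathcal U)$ supplied by the cusp-uniform hypothesis; properness of the $G_p$-action on $\pG\setminus\{p\}$ is inherited from the global convergence action.

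For the converse, given a geometrically finite convergence action of $G$ on $T$, I would build $X$ as a Groves--Manning-style cusped space: take a $G$-equivariant locally finite graph modelling the action on the ``thick part'', and glue, along each coset of each maximal parabolic $G_p$, a combinatorial horoball whose depth grows logarithmically. The final steps are to verify that $X$ is Gromov-hyperbolic and that $\pX$ is $G$-equivariantly homeomorphic to $T$, sending each conical point of $T$ to the endpoint of a ray meeting infinitely many orbit points in the thick part, and each parabolic fixed point to the apex of its attached horoball. The main obstacle, and where Bowditch and Yaman invest the most work, is organising the graph and horoball data so that the complement $\mathcal C(\pG)\setminus\mathcal U$ is co-compact while uniform thin-triangle constants persist globally; the conical-point hypothesis is exactly what is needed to rule out stray accumulations in the hyperbolicity argument.
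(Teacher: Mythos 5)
This theorem is not proved in the paper at all: it is quoted from Bowditch \cite{Bow1} (forward direction) and Yaman \cite{Yaman} (converse), so there is no internal argument to compare yours against. Judged on its own, your forward direction is the standard outline and is essentially sound: convergence of the action on $\pX$ follows from properness and hyperbolicity (though this is proved via Gromov products for sequences, not ``north--south dynamics of isometries,'' which is a statement about a single loxodromic element); the dichotomy ``the ray returns to the thick part unboundedly often versus is eventually trapped in one horoball'' is the right one (using that distinct horoballs have bounded intersection, so an eventually-horoball-bound connected ray ends up in a single $U$); and the cocompactness of $G_p$ on $\Lambda G\setminus\{p\}$ via projection to $\partial U$ is Bowditch's argument, modulo the lemma that $G_U$ acts cocompactly on $\partial U\cap\mathcal C(\Lambda G)$, which you assert but which itself needs the precise-invariance of the horoball system.

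The converse direction, however, has a genuine gap. The Groves--Manning cusped space is built from \emph{combinatorial} input: a choice of peripheral subgroups, a generating set of $G$ relative to them, and then a proof that the resulting complex is hyperbolic --- and that last proof normally takes relative hyperbolicity of $(G,\mathcal P)$ in some prior sense as its hypothesis. Starting from a bare geometrically finite convergence action on a compactum $T$, none of this is available: you must first show the bounded parabolic points fall into finitely many $G$-orbits (a theorem of Tukia), extract $\mathcal P$ as their stabilizers, handle the case where $G$ is not finitely generated (which this paper explicitly allows), and then prove hyperbolicity of the model space and identify its boundary equivariantly with $T$. Your sketch defers all of this to ``where Bowditch and Yaman invest the most work,'' which is to say it defers the entire content of the implication. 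Yaman's actual construction is also of a different nature: rather than a cusped Cayley complex, she adapts Bowditch's annulus-system construction to manufacture a hyperbolic space directly from the topology of $T$, precisely so as to avoid presupposing combinatorial data about $G$. As written, your converse is a plan for a proof, not a proof.
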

\begin{rem}
The first statement was proved by Bowditch in \cite{Bow1}; the second one was shown by Yaman in \cite{Yaman}.
\end{rem}

It is a useful fact that the stabilizer $G_U$ of $U \in \mathbb U$ acts cocompactly on the (topological) boundary $\partial U$ in $X$. Note that the boundary $\Lambda U$ at infinity of a horoball $U$ in $X \cup \pX$
consists of a bounded
parabolic point fixed by $G_U$. 

Let $\xi$ be a conical point in $\pX$. Then for any $\epsilon >0$,
any geodesic ray ending at $\xi$ exits the $\epsilon$-neighborhood of
any horoball $U \in \mathbb U$ which the geodesic enters into.

The following lemma is clear by the definition of cusp-uniform actions and the above observation.
\begin{lem}[Conical points]\label{charconical}
There exists a constant $r>0$ with the following property.

A point $\xi \in \Lambda G$ is a conical point if and only if there exists a sequence of elements $g_n \in G$ such that for any
geodesic ray $\gamma$ in $X$ with $\gamma_+=\xi$ and $\gamma_-=o\in X$, we have
$$\gamma \cap B(g_no, r) \neq
\emptyset$$ for all but finitely many $g_n$.
\end{lem}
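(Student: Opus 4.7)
The plan is to prove the two directions separately, with the uniform constant $r$ coming from the cocompact part of the cusp-uniform action. Since any two geodesic rays from $o$ to the same boundary point $\xi$ are at finite Hausdorff distance bounded by a constant depending only on $\delta$, it suffices to check the required property for a single representative ray $\gamma = [o, \xi)$ and absorb the extra constant into $r$.

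For the forward direction, assume $\xi \in \Lambda G$ is conical. The observation cited immediately after the dynamical definition of conical points ensures that $\gamma$ exits every horoball $\epsilon$-neighborhood it enters; hence $\gamma$ contains a sequence of points $p_n$ with $d(o, p_n) \to \infty$ lying outside $\mathcal U$. Up to replacing $p_n$ by a point at bounded distance (using the quasi-convexity of $\mathcal C(\Lambda G)$ and, if needed, assuming $o \in \mathcal C(\Lambda G)$), we may take $p_n \in \mathcal C(\Lambda G) \setminus \mathcal U$. Cocompactness of the $G$-action on $\mathcal C(\Lambda G) \setminus \mathcal U$ then provides a uniform $r_0$---essentially the diameter of a fundamental domain---such that every point of $\mathcal C(\Lambda G) \setminus \mathcal U$ is within $r_0$ of $Go$. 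Picking $g_n \in G$ with $d(p_n, g_n o) \le r_0$ yields the desired sequence, and $g_n o \to \xi$ follows since $d(o, g_n o) \to \infty$ and $g_n o$ stays close to $\gamma$.

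For the reverse direction, suppose a sequence $\{g_n\}$ satisfying the geometric condition exists and set $h_n := g_n^{-1}$. Because $d(o, h_n o) = d(o, g_n o) \to \infty$, after extraction $h_n o \to \eta \in \pX$. Using the isometric invariance $(h_n x, h_n y)_o = (x, y)_{g_n o}$ of the Gromov product, I first compute $(h_n o, h_n \xi)_o = (o, \xi)_{g_n o} \le d(g_n o, [o, \xi)) + O(\delta) \le r + O(\delta)$, which is uniformly bounded; a further extraction gives $h_n \xi \to a$ with $a \neq \eta$. Second, for any $\zeta \in \pX \setminus \{\xi\}$, the two rays $[o, \xi)$ and $[o, \zeta)$ fellow-travel only up to length $(\xi, \zeta)_o + O(\delta)$ and then diverge linearly, so $d(g_n o, [o, \zeta)) \to \infty$; hence $(h_n o, h_n \zeta)_o \to \infty$ and therefore $h_n \zeta \to \eta$. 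The sequence $\{h_n\}$ thus witnesses $\xi$ as a dynamical conical point, with attractor $\eta$ and $h_n \xi \to a \neq \eta$.

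The main obstacle is the uniformity of $r$ in the forward direction: this is precisely what the cocompactness of the cusp-uniform action on $\mathcal C(\Lambda G) \setminus \mathcal U$ delivers, making $r$ depend only on the action and on $\delta$. The reverse direction is routine, relying only on standard $\delta$-hyperbolic estimates on Gromov products together with the isometric conjugation identity.
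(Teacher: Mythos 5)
Your argument is correct and supplies exactly the details the paper leaves implicit: the paper gives no proof of this lemma beyond remarking that it is clear from the definition of cusp-uniform actions (cocompactness on $\mathcal C(\Lambda G)\setminus\mathcal U$) and the preceding observation that rays to conical points exit every horoball, which are precisely the ingredients you use for the forward direction, while your reverse direction via the Gromov-product identity $(h_nx,h_ny)_o=(x,y)_{g_no}$ is the standard verification of the dynamical definition. The only caveat is that you implicitly read ``sequence $g_n$'' as consisting of infinitely many distinct elements (so that properness gives $d(o,g_no)\to\infty$); this is clearly the intended reading, since the statement is vacuous for a constant sequence.
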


\subsection{Transition points}
A subset $Y$ is called \textit{$\epsilon$-quasiconvex} for  $\epsilon>0$ if any geodesic with two endpoints in $Y$ lies in $N_{\epsilon}(Y)$.
In this subsection, we discuss a notion of transitional points which are defined with respect to a system $\mathbb Y$  of uniformly quasiconvex subsets with \textit{bounded intersection}:   for any $\epsilon >0$
there exists $R =R(\epsilon)>0$ such that
$$
\diam{N_\epsilon(Y) \cap N_\epsilon(Y')} < R$$ for any two distinct
$Y\ne  Y' \in \mathbb Y$. 

It is well-known that in a cusp-uniform action, the $G$-finite system $\mathbb U$ of horoballs are uniformly quasiconvex with bounded intersection (cf. \cite[Sections 5 \& 6]{Bow1}). Moreover, we make the following convention throughout this paper. 

\begin{conv}\label{ConvContracting}
Let $\mathbb U\subset \mathbb Y$ be a $G$-finite system of quasiconvex subsets with bounded intersection such that, for each $Y\in \mathbb Y\setminus \mathbb U$, the stabilizer $G_Y$ acts co-compactly on $Y$. We consider below the transition points defined with respect to $\mathbb Y$.
\end{conv}

Before moving on, let's describe a typical example of $\mathbb Y$ in Convention \ref{ConvContracting}. We could consider an ``extended" relative hyperbolic structure of $G$,
which is obtained by adjoining into $\mathcal P$ a collection of
subgroups $\mathcal E$. This can be done in the following way. Let $h
\in G$ be a hyperbolic element. Denote by $E(h)$ the stabilizer in
$G$ of the fixed points of $h$ in $\pX$. Then $\mathcal E = \{g
E(h)g^{-1}: g \in G\}$ gives such an example. See \cite{Osin2} for more
detail.

Let $\mathcal C(E)$ denote the convex hull in $X$ of $\Lambda(E)$ for each
$E \in \mathcal E$. Denote $\mathbb E = \{\mathcal C(E): E \in \mathcal E\}$.
Then $\mathbb Y:=  \mathbb U \cup \mathbb E$ satisfies Convention \ref{ConvContracting}. 

In the following definition, we state an abstract formulation of a
notion of transition points, which was introduced originally  by
Hruska \cite{Hru} in Cayley graphs.
\begin{defn}
 Fix $\epsilon, R>0$.  Given a path $\gamma$, we say that a point $v$ in $\gamma$ is called \textit{$(\epsilon,
R)$-deep} in $Y \in \mathbb Y$ if it holds that $\gamma \cap B(v, R) \subset
N_\epsilon(Y).$ If $v$ is not $(\epsilon, R)$-deep in any $Y \in
\mathbb Y$, then $v$ is called an \textit{$(\epsilon, R)$-transition
point} in $\gamma$.
\end{defn}

\subsection{Projections and contracting property}
Given a subset $Y$ in a metric space $X$,  the \textit{projection} $\proj_Y(x)$ of a point
$x$ to $Y$ is the set of nearest points in the closure of $Y$ to
$x$. Then for $A \subset X$ define $\proj_Y(A) = \cup_{a\in A
}\proj_Y(a)$.

The following result is standard, with the proof left to the interested reader.

\begin{lem}\label{contracting}
For any $C>0$, there exists a constant $M=M(C)>0$  such that  the following hold for a $C$-quasiconvex subset $Y$.
\begin{enumerate}
\item
If $\gamma$ is a geodesic in $X$ with $N_C(Y) \cap \gamma =
\emptyset$, then $\diam{\proj_Y(\gamma)} < M$.
\item
$\diam{\proj_Y(x)}\le M$ for any $x\in X$.
\item
$d(\proj_Y(\gamma_-), \gamma) \le M$ for  a geodesic 
$\gamma$ with $\gamma_+ \in Y$.  
\end{enumerate}
\end{lem}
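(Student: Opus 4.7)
The plan is to derive all three items from the $\delta$-thin triangle property combined with $C$-quasiconvexity of $Y$, taking $M$ to be an explicit polynomial expression in $\delta$ and $C$.

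For item (2), let $p, q \in \proj_Y(x)$ both realize $d(x, Y)$. By quasiconvexity, $[p, q] \subset N_C(Y)$, so each $z \in [p, q]$ satisfies $d(x, z) \ge d(x, Y) - C = d(x, p) - C$. Applying $\delta$-thinness at $x$ to the midpoint $m$ of $[p, q]$ gives $d(x, m) \le d(x, p) - d(p, q)/2 + \delta$. Comparing the two estimates forces $d(p, q) \le 2\delta + 2C$. For item (3), put $p \in \proj_Y(\gamma_-)$; since $\gamma_+ \in Y$, we have $d(\gamma_-, p) \le d(\gamma_-, \gamma_+)$, and $[p, \gamma_+] \subset N_C(Y)$ by quasiconvexity. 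Setting $s = (p, \gamma_+)_{\gamma_-}$, Lemma \ref{thintri} places the point at distance $s$ along $[\gamma_-, p]$ within $\delta$ of $\gamma$; the residual subsegment of $[\gamma_-, p]$ from that point to $p$ has length at most $d(p, \gamma_+)/2$ and, being chased through $[p, \gamma_+] \subset N_C(Y)$, keeps $p$ within $O(\delta + C)$ of $\gamma$.

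Item (1) is the main obstacle, as it encodes the genuine contracting property. The plan is the classical bottleneck argument: with $a, b \in \gamma$ and $p \in \proj_Y(a)$, $q \in \proj_Y(b)$, I form the concatenated path $[a, p] \cup [p, q] \cup [q, b]$. Item (2) controls the Gromov products at $p$ and $q$, so the geodesics $[a, p]$ and $[b, q]$ meet $[p, q]$ with bounded overlap, while $[p, q] \subset N_C(Y)$ by quasiconvexity; the concatenation is therefore a $(1, K)$-quasi-geodesic for some $K = K(\delta, C)$. By the Morse stability lemma in $\delta$-hyperbolic spaces, $\gamma$ lies within $H = H(\delta, C)$ of it, so a middle subsegment of $\gamma$ of length $\ge d(p, q) - O(K)$ shadows $[p, q] \subset N_C(Y)$ within distance $H$. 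The hypothesis $\gamma \cap N_C(Y) = \emptyset$ constrains this shadowing and ultimately forces $d(p, q) \le M$ for $M = M(C)$ absorbing the dependence on $\delta$. The delicate part is making the quasi-geodesic constant $K$ and the Morse stability constant $H$ explicit and bootstrapping them into the final $M$; this is the routine but slightly tedious bookkeeping implicit in the word ``standard.''
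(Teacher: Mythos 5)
The paper offers no proof to compare with---it declares the lemma standard and leaves it to the reader---so I assess your proposal on its own merits. Items (2) and (3) are essentially right. In (2) the inequality you write is correct and yields $d(p,q)\le 2(\delta+C)$, though with the paper's definition of thinness it is obtained by applying thinness at the vertex $p$ (sides $[p,x]$ and $[p,q]$, Gromov product $(x,q)_p=d(p,q)/2$), not at $x$. In (3) your sketch stops one step short: bounding the residual segment $[u,p]$ of $[\gamma_-,p]$ by $d(p,\gamma_+)/2$ is useless when $d(p,\gamma_+)$ is large. What closes the argument is minimality of the projection: every point of $[u,p]$ is $\delta$-close to the initial segment of $[p,\gamma_+]\subset N_C(Y)$, so $d(u,Y)\le \delta+C$; comparing $d(\gamma_-,Y)\le d(\gamma_-,u)+d(u,Y)$ with $d(\gamma_-,p)=d(\gamma_-,u)+d(u,p)$ then forces $d(u,p)\le\delta+C$ and hence $d(p,\gamma)\le 2\delta+C$. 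That step must be written out.

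For (1) there are two genuine problems. First, item (2) does not control the Gromov products $(a,q)_p$ and $(p,b)_q$; it only bounds $\diam{\proj_Y(x)}$ for a single point $x$. The bound $(a,q)_p\le \delta+C$ comes from exactly the minimality-plus-quasiconvexity mechanism of item (3), and should be derived that way before invoking the fact that a broken geodesic whose middle segment is long compared with the Gromov products at the break points is a uniform quasi-geodesic. Second, and more seriously, the endgame you defer to ``routine bookkeeping'' does not close with the constants as stated: Morse stability places the midpoint $m$ of $[p,q]$ within some $H=H(\delta,C)>0$ of $\gamma$, and $m\in N_C(Y)$, so you only obtain $d(\gamma,Y)\le C+H$, which is perfectly compatible with $\gamma\cap N_C(Y)=\emptyset$. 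No bookkeeping can repair this, because item (1) with the literal radius $C$ is false: in the ladder (two copies of $\mathbb{R}$ at distance $1$ joined by rungs at the integers, a $\delta$-hyperbolic geodesic space), the bottom rail $Y$ is convex, hence $C$-quasiconvex for $C=1/2$, and the top rail $\gamma$ is a geodesic avoiding $N_{1/2}(Y)$, yet $\proj_Y(\gamma)$ is unbounded. Your argument actually proves the correct standard version, in which $\gamma$ is required to avoid $N_{C+k\delta}(Y)$ for a universal $k$ (equivalently, one first enlarges the quasiconvexity constant to dominate $\delta$, which is how the lemma is applied elsewhere in the paper); you should either prove that version explicitly or flag the needed adjustment rather than absorb it into the word ``standard''.
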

 
The first item is called \textit{contracting property}, which in fact characterizes the quasiconvexity of a subset in hyperbolic spaces.  As a matter of fact, most of results hold for contracting subsets in a general (not necessarily hyperbolic) metric space.  

The main result of this subsection is the following analogue of Lemma 2.18 in \cite{YANG7}.  Here we give a proof making use of $\delta$-hyperbolicity. It can be also proved by making use of contracting property of $Y$ as in \cite{YANG7}.
\begin{lem}\label{PairTransX}
There exist $\epsilon, R>0$ with the following property. For any
$r>0$, there exist  $D=D(\epsilon, R), L =L(\epsilon, R, r)>0$ with
the following property.

Let $\alpha, \gamma$ be two geodesics in a $\delta$-hyperbolic space $X$ such that
$\alpha_-=\gamma_-, \; d(\alpha_+, \gamma_+) < r$. Take an
$(\epsilon, R)$-transition point $v$ in $\alpha$ such that $d(v,
\alpha_+) >L$. Then there exists an $(\epsilon, R)$-transition point
$w$ in $\gamma$ such that $d(v, w) \le D$.
\end{lem}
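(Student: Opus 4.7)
The plan is to produce a candidate $w\in\gamma$ via the $\delta$-thin triangle property and then verify that $w$ is $(\epsilon,R)$-transition by a contradiction argument. Consider the geodesic triangle with vertices $\alpha_-=\gamma_-$, $\alpha_+$, $\gamma_+$ and sides $\alpha$, $\gamma$, and $[\alpha_+,\gamma_+]$, the last of length less than $r$. By $\delta$-thinness, every point of $\alpha$ lies in the $\delta$-neighbourhood of $\gamma\cup[\alpha_+,\gamma_+]$. Choosing $L>R+r+2\delta$ ensures that $v$ and every point of $\alpha$ within distance $R$ of $v$ is at distance greater than $r+\delta$ from $\alpha_+$, hence too far from $[\alpha_+,\gamma_+]$ to admit a $\delta$-companion there, so each such point has a $\delta$-companion on $\gamma$. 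Denote by $w\in\gamma$ the companion of $v$ produced this way and set $D:=\delta$.

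For the verification, suppose for contradiction that $w$ is not $(\epsilon,R)$-transition, so $\gamma\cap B(w,R)\subset N_\epsilon(Y)$ for some $Y\in\mathbb Y$. For any $v'\in\alpha\cap B(v,R-2\delta)$ the preceding step supplies $w'\in\gamma$ with $d(v',w')\le\delta$, and the triangle inequality then gives $d(w,w')\le\delta+(R-2\delta)+\delta=R$; hence $w'\in N_\epsilon(Y)$ and $d(v',Y)\le\epsilon+\delta$. This yields
\begin{equation*}
\alpha\cap B(v,R-2\delta)\;\subset\;N_{\epsilon+\delta}(Y).
\end{equation*}

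The main obstacle is to upgrade this $(\epsilon+\delta,R-2\delta)$-deepness of $\alpha$ at $v$ to the $(\epsilon,R)$-deepness required to contradict the transition hypothesis on $v$, since the fellow-travel argument intrinsically loses $\delta$ in the thickening parameter and $2\delta$ in the window parameter. I would absorb this shift by fixing $(\epsilon,R)$ at the outset with $\epsilon$ strictly larger than the quasiconvexity constant $\sigma$ of $\mathbb Y$ (supplied by Lemma~\ref{contracting}) by at least $\delta$, and $R$ strictly larger than the bounded-intersection threshold of $\mathbb Y$ applied with parameter $\epsilon+\delta$ by a further $O(\delta)$. Under these choices, bounded intersection of $\mathbb Y$ identifies $Y$ as the unique element that can shadow the already-long sub-arc $\alpha\cap B(v,R-2\delta)$, while the $\sigma$-quasiconvexity of $Y$ extends the deep sub-arc outward by a $2\delta$-collar at the reduced cost $\epsilon$ rather than $\epsilon+\delta$. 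The two ingredients combine to give $\alpha\cap B(v,R)\subset N_\epsilon(Y)$, contradicting that $v$ is an $(\epsilon,R)$-transition point on $\alpha$; this parameter management is the genuinely delicate part of the argument.
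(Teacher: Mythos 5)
Your first step (producing the $\delta$-companion $w$ of $v$ on $\gamma$ and deducing $\alpha\cap B(v,R-2\delta)\subset N_{\epsilon+\delta}(Y)$ from the assumed deepness of $w$) is sound and matches the setup of the paper's argument. The gap is exactly at the step you flag as delicate: the ``upgrade'' from $(\epsilon+\delta,R-2\delta)$-deepness of $v$ to $(\epsilon,R)$-deepness cannot be carried out, and no choice of $(\epsilon,R)$ at the outset repairs it, because the same pair $(\epsilon,R)$ must appear in both the hypothesis on $v$ and the contradiction you seek. Quasiconvexity of $Y$ controls geodesics whose endpoints lie near $Y$; it gives no information about the points of $\alpha$ in $B(v,R)\setminus B(v,R-2\delta)$, which may leave every neighbourhood of $Y$ immediately, and it cannot shrink the thickening from $\epsilon+\delta$ back to $\epsilon$ for points near the ends of the controlled sub-arc. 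More fundamentally, the statement you are trying to prove --- that the $\delta$-companion $w$ is itself an $(\epsilon,R)$-transition point, so that $D=\delta$ works --- is false in general: if $\gamma$ runs along the boundary of $N_\epsilon(Y)$ for a long stretch around $w$ while $\alpha$ fellow-travels it at distance $\epsilon+\delta/2$ from $Y$, then $w$ is $(\epsilon,R)$-deep in $Y$ but $v$ is an $(\epsilon,R)$-transition point.

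The paper's proof avoids this by not insisting that the companion $w$ be the output point. It chooses $\epsilon=2\delta+\epsilon_0$ (so that the thin-quadrilateral plus quasiconvexity estimate lands exactly in $N_\epsilon(Y)$, with no parameter loss) and $R$ larger than the bounded-intersection constant, and then shows that the entry and exit points $x,y$ of $\gamma$ in $N_\epsilon(Y)$ are $(\epsilon,R)$-transition points. The key estimate is that $v$ cannot be at distance greater than $R+\epsilon+4\delta$ from \emph{both} $x$ and $y$: if it were, the long segment of $\gamma$ inside $N_\epsilon(Y)$ surrounding $w$ would force, via a $2\delta$-thin quadrangle with two short vertical sides landing in $Y$, the full window $\alpha\cap B(v,R)$ into $N_\epsilon(Y)$, contradicting that $v$ is transitional. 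The transition point delivered is therefore $x$ or $y$, at distance at most $D=R+\epsilon+5\delta$ from $v$ --- a constant of order $R$, not $\delta$. To repair your argument you would need to adopt this structure: abandon $D=\delta$ and locate the transition point at the boundary of the deep excursion of $\gamma$ rather than at the companion of $v$.
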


\begin{proof} 
Since $d(v,
\alpha_+) > L > r\ge d(\gamma_+, \alpha_+)$,  there exists $w\in \gamma$ such that 
\begin{equation}\label{dvwEQ}
d(v, w) \le \delta
\end{equation}
by Lemma \ref{thintri}.  

Let $\epsilon_0>0$ be the quasiconvexity constant for all $Y\in \mathbb Y$, and $\epsilon=2\delta+ \epsilon_0$  and $R>\mathcal R(\epsilon)$, where $\mathcal R$ is the bounded intersection function of $\mathbb Y$.
Assume that $w$ is $(\epsilon, R)$-deep in some $Y \in \mathbb
Y$. Otherwise, $w$ is an $(\epsilon, R)$-transition point and the proof is
done.

\begin{claim}
Let $x, y$ be the entry and exit points of $\gamma$ in $N_\epsilon(Y)$ respectively. Then $x, y$ are $(\epsilon, R)$-transition points.
\end{claim}
\begin{proof}[Proof the claim]
 Indeed,  by quasiconvexity of $Y$, $[x, y]$ lies in $N_\epsilon(Y)$. Since $R>\mathcal R(\epsilon)$, it follows by $\mathcal R$-bounded intersection that no subsegment of length $R$ in $[x, y]$ can be contained in $N_\epsilon(Y')$ for $Y'\ne Y\in \mathbb Y$. Thus $x, y$ are $(\epsilon, R)$-transition points. 
\end{proof}
To complete the proof, we shall prove  the following 
\begin{equation}\label{dxyEQ}
d(v, x), d(v, y) > R+\epsilon+4\delta,
\end{equation}
is impossible. Suppose now (\ref{dxyEQ}) holds. 

If $d(y, \gamma_+)> r$, let $z=y$; otherwise, let $z\in \gamma$ such that $d(z, \gamma_+) = r$. In both cases, there exists $\tilde z\in \alpha$ such that $d(z, \tilde z)\le \delta$ by Lemma \ref{thintri}. Let $\tilde x \in \alpha$ such that $d(x, \tilde x)\le \delta$ by Lemma \ref{thintri}. Hence, we obtain that $\tilde x, \tilde z\in N_{\delta+\epsilon}(Y)$. 

We now give a lower bound on $d(v, \tilde z)>R+\epsilon+3\delta$. In the former case,   $d(v, \tilde z)>d(v,  z)-d(z, \tilde z)>R+\epsilon+3\delta$ by  (\ref{dxyEQ}). For the later case,  since $d(v, \alpha_+)>L$ by hypothesis, we have $d(v, \gamma_+)>d(v, \alpha_+)-d(\gamma_+, \alpha_+)>L-r$.  Setting $L:=2r+R+\epsilon+4\delta$, we obtain 
$$
\begin{array}{ll}
d(v, \tilde z)&> d(v, z)-\delta>d(v, \gamma_+)-d(\gamma_+, z)-\delta\\
&>L-2r-\delta>R+\epsilon+3\delta.
\end{array}
$$ 

Let $x', y'\in Y$ such that $d(\tilde x, x'), d(\tilde z, z')\le \delta+\epsilon$. By thin-property of quadrangle,   every side lies in the $2\delta$-neighborhood of the other three sides. Note that $d(\tilde x, v), d(\tilde z, v)>R+3\delta+\epsilon$. So each point in the neighborhood $B(v, R)\cap [\tilde x, \tilde z]$ of $v$ has at least a $2\delta$-distance to $[\tilde x, x']$ and $[\tilde z, z']$, and thus   lies in $N_{2\delta}([x', y'])$. Hence, $B(v, R)\cap [\tilde x, \tilde z]\subset N_{\epsilon}(Y)$ implying that $v$ is $(\epsilon, R)$-deep in $Y$. This   contradicts to the assumption that $v$ is $(\epsilon, R)$-transitional, so \ref{dxyEQ}) is not true.

Since $x, y$ are $(\epsilon, R)$-transition points in $\gamma$, we have $d(w, \{x, y\})\le d(w, v)+d(v, \{x, y\}) \le D:=R+\epsilon+5\delta$. The proof is complete.
\end{proof}
 
\subsection{Partial sums in Poincar\'e series} In some computations, we often need deal with partial sums in Poincare series.  
So we introduce the following two closely related series for simplying notations: 
\begin{equation}\label{SeriesR}
\mathcal{S}_{Y}(z, w,R)=\sum\limits_{h \in G_Y}^{d(z, hw)\ge R} \exp(-\g Gd(z, hw)),
\end{equation}
and
\begin{equation}\label{SeriesDelta}
\mathcal{A}_{Y}(z, R, \Delta) = \sum\limits_{n\ge R}\sharp A_Y(z, n, \Delta) \cdot \exp(-\g G n),
\end{equation}
for $Y\in \mathbb Y$, where 
$$
A_Y(o, n, \Delta) := A(o, n, \Delta) \cap G_Y.
$$
The following elementary relations will be useful. 
 
\begin{lem}\label{ConvertSA}
\begin{enumerate}
\item
$\mathcal{S}_{Y}(o, o,R-\Delta) \asymp_\Delta \mathcal{A}_{Y}(o, R, \Delta)$
\item
There exists $K=K(r)$ such that 
$$\mathcal{S}_{Y}(o, o, R+K) \le \mathcal{S}_{Y}(z, w, R) \le  \mathcal{S}_{Y}(o, o, R-K),$$
for any $z, w \in N_r(Y)$ and   $o\in \partial Y$.  
\end{enumerate}
\end{lem}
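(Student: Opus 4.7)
For part (1), the plan is a standard discretization of the Poincaré series. I would partition the summation range $\{h \in G_Y : d(o, ho) \ge R - \Delta\}$ into annular layers $A_Y(o, n, \Delta)$ indexed by integers $n \ge R$, chosen so that the shells have at most $O(1)$ overlap depending on $\Delta$. For every $h$ lying in the layer at level $n$, the definition gives $|d(o, ho) - n| \le \Delta$, so $\exp(-\g G d(o, ho)) \asymp_\Delta \exp(-\g G n)$. Summing over $h$ within each layer and then over $n$ converts $\mathcal{S}_Y(o, o, R - \Delta)$ into $\mathcal{A}_Y(o, R, \Delta)$ up to a multiplicative constant depending only on $\Delta$.

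For part (2), the structural input is the left--right invariance
$$\mathcal{S}_Y(z, w, R) = \mathcal{S}_Y(g_1^{-1} z,\, g_2^{-1} w,\, R) \qquad (g_1, g_2 \in G_Y),$$
obtained by the reindexing $h \mapsto g_1 h g_2^{-1}$ of the defining sum (valid because $g_1 G_Y g_2^{-1} = G_Y$). Using Convention \ref{ConvContracting}, $G_Y$ acts cocompactly on $\partial Y$ (on $Y$ itself when $Y \notin \mathbb U$, and on the boundary horocycle when $Y \in \mathbb U$), so we may choose $g_1, g_2$ with $d(g_1^{-1}z, o), d(g_2^{-1}w, o) \le K_0 = K_0(r)$. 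The triangle inequality then gives
$$\bigl|\,d(g_1^{-1}z,\, h\,g_2^{-1}w) - d(o, ho)\,\bigr| \le 2K_0$$
uniformly for $h \in G_Y$. Consequently the threshold condition $d(z, hw) \ge R$ becomes $d(o, ho) \ge R \pm 2K_0$, while each summand differs from $\exp(-\g G d(o, ho))$ by a bounded multiplicative factor $e^{\pm 2 \g G K_0}$. Choosing $K = K(r)$ large enough to absorb the multiplicative distortion into an additional shift of the threshold (using the coarse monotonicity of $R \mapsto \mathcal{S}_Y(o, o, R)$) yields the sandwich inequalities in (2).

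The main technical point is establishing the cocompactness-based approximation in the horoball case $Y \in \mathbb U$: while $G_Y$ acts cocompactly on $\partial Y$, the neighborhood $N_r(Y)$ extends into the horoball, and we must check that a $G_Y$-shift brings $z, w$ within a bounded distance of $o \in \partial Y$ on the relevant scale. This relies on the standard projection--displacement estimate from $N_r(Y)$ onto $\partial Y$, combined with the fact that distances within a horoball between points at the same horocyclic level are controlled by the corresponding distances on $\partial Y$ up to an additive error depending only on $r$. Once this approximation is in place, the rest of the argument is bookkeeping of the constants absorbed into $K(r)$.
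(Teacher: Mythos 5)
Your argument follows the paper's proof almost verbatim: part (1) is the same bounded-overlap layering into the annuli $A_Y(o,n,\Delta)$, and part (2) is the same reindexing $h\mapsto h_1^{-1}hh_2$ after using cocompactness of $G_Y$ (on $Y$, resp.\ on $\partial Y$) to move $z,w$ within a bounded distance of $o$. Your worry about points of $N_r(Y)$ lying deep inside a horoball is legitimate but moot: the hypothesis is meant to be read (and is only ever invoked, e.g.\ in Lemma~\ref{convps2} and Lemma~\ref{biginclusion}) as $z,w\in N_r(\partial Y)$, so cocompactness on the horosphere already supplies nearby orbit points and no projection--displacement estimate is needed.

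The one step that does not work as written is the final bookkeeping in (2). After the reindexing you correctly obtain
$$\exp(-\g G d(z,hw))\le e^{2\g G K_0}\exp(-\g G d(o,h'o)),\qquad h'=h_1^{-1}hh_2,$$
whence $\mathcal S_Y(z,w,R)\le e^{2\g G K_0}\,\mathcal S_Y(o,o,R-2K_0)$. You then propose to ``absorb the multiplicative distortion into an additional shift of the threshold'' by enlarging $K$. That absorption is not valid: the difference $\mathcal S_Y(o,o,R-K)-\mathcal S_Y(o,o,R-2K_0)$ is the partial sum over the annulus $R-K\le d(o,h'o)<R-2K_0$, which can be arbitrarily small (even zero) compared with the tail $\mathcal S_Y(o,o,R-2K_0)$, so no choice of $K$ converts a multiplicative constant into a threshold shift. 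The honest conclusion is the two-sided bound up to a multiplicative constant depending on $r$, which is exactly what every downstream application uses; the paper's own literal ``$\le$'' is loose in the same way, so your proof is substantively fine, but you should state the fix as a multiplicative constant $\asymp_r$ rather than a larger $K$.
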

\begin{proof}
\textbf{(1).} Observe that
$$\sum\limits_{h \in G_Y}^{d(o, ho)>R-\Delta}  \exp(-\g Gd(o, ho)) \asymp_\Delta \sum\limits_{i\ge R}  A_Y(o, i, \Delta)\exp(-\g G\cdot i)$$
for any $n>0$. 
The ``$\prec_\Delta$'' direction is trivial. The other direction 
$$\frac{1}{N} \sum\limits_{i\ge R}  A_Y(o, i, \Delta) \exp(-\g G i)\le \sum\limits_{h \in G_Y}^{d(o, ho)\ge R-\Delta}  \exp(-\g Gd(o, ho)),$$
follows from the fact that at most $N:=\sharp N(o, 2\Delta)$ sets $A_Y(o, i, \Delta)$ contain  $ho$. 

\textbf{(2).} 
By Convention \ref{ConvContracting}, $G_Y$ acts co-compactly either on  $\partial Y$ or on $Y$ with a fundamental domain of diameter at most $M$. So there exist $h_1, h_2\in G_Y$ such that $d(h_1o, z), d(h_2o, w)\le M+r$. If $d(z, hw)\ge R$, then $d(o, h_1^{-1}hh_2\cdot o)\ge R-2(M+r)$. Setting $K=2(M+r)$, we obtain that 
$$
\mathcal{S}_{Y}(z, w,R) \le \mathcal{S}_{Y}(o, o,R-K),
$$
for any $R>0$. The proof is complete.
\end{proof}

The next lemma states that the series $\mathcal{A}_{U}$ is almost conjugacy invariant. This   will be useful in Section \ref{Section6}.

\begin{lem}\label{biginclusion}
For any $r>0$, there exists $K=K(r)>0$ with the following property.  
Let $x \in N_r(U), y\in N_r(V)$   where $U=gV \in \mathbb Y$ for $g\in G$.  Then for any $\Delta, R>0$,
\begin{equation}\label{UVxyEQ}
\mathcal{A}_{V}(y, R+K, \Delta) \prec_\Delta \mathcal{A}_{U}(x, R, \Delta)  \prec_\Delta \mathcal{A}_{V}(y, R-K, \Delta).
\end{equation}
\end{lem}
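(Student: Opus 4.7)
The plan is to reduce Lemma \ref{biginclusion} to a comparison between the pure Poincar\'e sums $\mathcal{S}_U(x,x,\cdot)$ and $\mathcal{S}_V(y,y,\cdot)$, and then pass back to $\mathcal{A}_U,\mathcal{A}_V$ via Lemma \ref{ConvertSA}(1). The crux is that the relation $U=gV$ yields $G_U=gG_V g^{-1}$, so the map $h'\mapsto gh'g^{-1}$ is a bijection $G_V\to G_U$ satisfying $d(x,gh'g^{-1}x)=d(g^{-1}x,h'g^{-1}x)$. Writing $\tilde y:=g^{-1}x$, we have $\tilde y\in g^{-1}N_r(U)=N_r(V)$, and the above identity gives, term by term,
\[
\mathcal{S}_U(x,x,R')=\mathcal{S}_V(\tilde y,\tilde y,R')
\]
for every threshold $R'\ge 0$.

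Both $\tilde y$ and $y$ now lie in $N_r(V)$, so I apply Lemma \ref{ConvertSA}(2) with a common reference point $o'\in\partial V$: this yields a constant $K_1=K_1(r)$ such that
\[
\mathcal{S}_V(y,y,R'+2K_1)\le \mathcal{S}_V(\tilde y,\tilde y,R')\le \mathcal{S}_V(y,y,R'-2K_1).
\]
To finish, I convert between the $\mathcal{S}$- and $\mathcal{A}$-series. Lemma \ref{ConvertSA}(1) is stated with a reference point $o\in\partial Y$, but the only fact its proof uses is that $\sharp A_Y(o,i,\Delta)$ equals $\sharp\{h\in G_Y:i-\Delta\le d(o,ho)<i+\Delta\}$ up to bounded shifts in $i$; the analogous identity at a general $z\in N_r(Y)$ holds with an error in $i$ controlled by $d(o,z)$ via the triangle inequality, and hence only by $r$ (after using $G$-finiteness of $\mathbb{Y}$ in Convention \ref{ConvContracting} to replace $z$ by a nearby $G_Y$-translate of a basepoint on $\partial Y$). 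Applying this at $x\in N_r(U)$ and $y\in N_r(V)$ introduces two further bounded shifts, all of which are absorbed into a single constant $K=K(r)$ to deliver \eqref{UVxyEQ}.

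The main obstacle is strictly bookkeeping: one has to combine three independent bounded shifts — the $\Delta$ appearing in Lemma \ref{ConvertSA}(1), the $2K_1(r)$ from Lemma \ref{ConvertSA}(2) applied at both $\tilde y$ and $y$, and the extra shift needed to pass between the ``reference-point'' and ``basepoint'' versions of $A_Y$ — and verify that the resulting constant depends only on $r$ and not on the particular choice of $x,y,$ or the conjugating element $g$. This uniformity is precisely what the $G$-finiteness in Convention \ref{ConvContracting} guarantees, since every point of $N_r(V)$ is within bounded distance of a $G_V$-translate of a fixed reference point on $\partial V$.
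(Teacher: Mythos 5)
Your proposal is correct and follows essentially the same route as the paper: conjugate the sum over $G_U$ based at $x$ to a sum over $G_V$ based at $g^{-1}x\in N_r(V)$, compare with the sum based at $y$ via Lemma \ref{ConvertSA}(2) through the common reference point on $\partial V$, and convert between the $\mathcal S$- and $\mathcal A$-series with Lemma \ref{ConvertSA}(1). Your version is in fact slightly more careful than the paper's (which writes $gx$ where $g^{-1}x$ is meant, and applies Lemma \ref{ConvertSA}(1) at a non-basepoint without comment), but the argument is the same.
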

\begin{proof}
Since   $A_U(x, i, \Delta)=A_V(gx, i, \Delta)$,  we have 
$$
\begin{array}{ll}
 \mathcal{A}_{U}(x, R, \Delta)  \asymp_\Delta  \mathcal{S}_{V}(gx, gx, R-\Delta)  \\
\end{array}
$$
by Lemma \ref{ConvertSA}(1). Apply Lemma \ref{ConvertSA}(2) gives a constant $K=K(r)$ such that
$$
\mathcal{S}_{V}(y, y, R-\Delta+K) \le  \mathcal{S}_{V}(gx, gx, R-\Delta) \le   \mathcal{S}_{V}(y, y, R-\Delta-K). 
$$
So by Lemma \ref{ConvertSA}(1): 
$$
\mathcal{A}_{V}(y, R+K, \Delta) \prec_\Delta \mathcal{S}_{V}(gx, gx, R-\Delta)  \prec_\Delta \mathcal{A}_{V}(y, R-K, \Delta)
$$
completing the proof of the lemma. 
\end{proof}

\section{Quasi-conformal densities}\label{Section3}

In this section,  assume  that $G$ has a cusp-uniform action on $X$ such that $G$ is of divergent type (results of Coornaert in Subsection \ref{Coornaert} even hold without this assumption).    We fix the basepoint $o \in X$, and a system $\mathbb Y$ in the convention \ref{ConvContracting} with respect to which transitional points are defined.

\subsection{Partial shadows and cones}

These notions were introduced in \cite{YANG7}.

\begin{defn}[Shadow and Partial Shadow]
Let $r, \epsilon, R \ge 0$ and $g \in G$. The \textit{shadow $\Pi_r(go)$} at $go$ is the set of points
$\xi \in \pG$ such that there exists SOME geodesic $[o, \xi]$
intersecting $B(go, r)$.

The \textit{partial shadow $\Pi_{r, \epsilon,
R}(go)$} is the set of points $\xi \in \Pi_r(go)$ where, in addition, the geodesic $[o, \xi]$ as above contains an $(\epsilon, R)$-transition point $v$ in $B(go, 2R)$.
\end{defn}

Inside the space $X$, the (partial) shadowed region motivates the notion of a (partial) cone. 
\begin{defn}[Cone and Partial Cone]\label{defnpcone}
Let $g \in G$ and $r \ge 0$. The \textit{cone $\Omega_r(go)$} at $go$
is the set of elements $h$ in $G$ such that there exists SOME
geodesic $\gamma=[o, ho]$ in $X$ such that $\gamma \cap B(go, r) \neq
\emptyset$.

The \textit{partial cone
$\Omega_{r, \epsilon, R}(go)$} at $go$ is the set of elements $h \in
\Omega_r(go)$ such that one of the following statements holds.
\begin{enumerate}
\item $d(o, ho) \le d(o, go) + 2R,$
\item
The geodesic $\gamma$ as above contains an $(\epsilon,
R)$-transition point $v$ such that $d(v, go) \le 2R$.
\end{enumerate}
 
\end{defn}

\subsection{Patterson-Sullivan measures}\label{Coornaert}
The aim of this subsection is to recall some basic results of Patterson-Sullivan measures established by Coornaert in \cite{Coor} for general discrete group actions on hyperbolic spaces.

A Borel measure $\mu$ on a topological space $T$ is \textit{regular}
if $\mu(A) = \inf\{\mu(U): A \subset U, U$ is open$\}$ for any Borel
set $A$ in $T$. The $\mu$ is called \textit{tight} if $\mu(A) =
\sup\{\mu(K): K \subset A, K$ is compact$\}$ for any Borel set $A$
in $T$.

Recall that \textit{Radon} measures on a topological space $T$ are
finite, regular, tight and Borel measures. It is well-known that all
finite Borel measures on compact metric spaces are Radon. Denote by
$\mathcal M(\Lambda G)$ the set of finite positive Radon measures on
$\Lambda G$. Then $G$ possesses an action on $\mathcal M(\Lambda G)$ given by
$g_*\mu(A) = \mu(g^{-1}A)$ for any Borel set $A$ in $\Lambda G$.

Endow $\mathcal M(\Lambda G)$ with the weak-convergence topology. Write
$\mu(f) = \int f d\mu$ for a continuous function $f \in C^1(\Lambda G)$.
Then $\mu_n \to \mu$ for $\mu_n \in \mathcal M(\Lambda G)$ if and only if $\mu_n(f)
\to \mu(f)$ for any $f \in C^1(\Lambda G)$,  equivalently, if and only if,
$\liminf\limits_{n \to \infty}\mu_n(U) \ge \mu(U)$ for any open set
$U \subset \Lambda G$.

\begin{defn}
Let $\sigma \in [0, \infty[$.    A   map
$$\mu: G \to \mathcal M(\Lambda G), \;g \to \mu_g$$ is called \textit{$G$-equivariant}
if $\mu_{hg}(A) = h_*\mu_{g}(A)$ for any Borel set $A \subset
\Lambda G$.
 It is a \textit{$\sigma$-dimensional quasi-conformal density} if for any $g,
h \in G$ the following holds
\begin{equation}\label{cdensity}
\frac{d\mu_{g}}{d\mu_{h}}(\xi) \asymp \exp(-\sigma B_\xi (go, ho)),
\end{equation}
 for $\mu_h$-a.e. points $\xi \in \Lambda G$.

\end{defn}

\begin{rem}
Denote $\nu = \mu_1$, where $1 \in G$ is the group identity.  Define $g^*\nu(A) = \nu(gA)$.
By the equivariant property of $\mu$, we obtain the following
\begin{equation}\label{qconf}
\frac{dg^*\nu}{d\nu}(\xi) \asymp \exp(-\sigma B_\xi (g^{-1}o, o)),
\end{equation}
for $\nu$-a.e. points $\xi \in \Lambda G$. The inequality (\ref{qconf}) instead of (\ref{cdensity}) is used by some
authors to define the quasi-conformal density, for example, in \cite{Coor}. As $G$ acts minimally on $\Lambda G$ with $\sharp \Lambda G>3$, $G$ has no global fixed point on $\Lambda G$. Then $\mu_g$
is not an atom measure.

\end{rem}

By the equivariant property of $\mu$, we see the following result.
\begin{lem}\cite[Corollaire 5.2]{Coor}
Let $\{\mu_v\}_{v\in G}$ be a $\sigma$-dimensional quasi-conformal density on $\Lambda G$.
Then the support of any $\mu_v$ is $\Lambda G$.
\end{lem}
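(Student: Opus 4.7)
The plan is to show that $\operatorname{supp}(\mu_v)$ is a nonempty closed $G$-invariant subset of $\Lambda G$, and then invoke minimality of the $G$-action on $\Lambda G$ to conclude that $\operatorname{supp}(\mu_v)=\Lambda G$.

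First, I would observe that the support does not depend on $v\in G$. Indeed, the quasi-conformal relation
\[
\frac{d\mu_v}{d\mu_w}(\xi)\asymp \exp(-\sigma B_\xi(vo,wo))
\]
holds $\mu_w$-almost everywhere, and the right-hand side is a strictly positive, locally bounded function of $\xi$ (the Busemann function is bounded on any compact set for fixed $v,w$). Consequently $\mu_v$ and $\mu_w$ are mutually absolutely continuous, so $\operatorname{supp}(\mu_v)=\operatorname{supp}(\mu_w)$. Call this common closed set $F$.

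Next, I would show $F$ is $G$-invariant. By the equivariance $\mu_{hv}=h_*\mu_v$, one has $\operatorname{supp}(\mu_{hv})=h\cdot \operatorname{supp}(\mu_v)$. Combining with the previous step gives $h\cdot F=F$ for every $h\in G$, so $F$ is $G$-invariant. It is also nonempty: by the remark preceding the lemma, $\mu_v$ is not the zero measure (in particular, $\mu_v$ is not an atomic measure supported on a fixed point, since $G$ has no global fixed point on $\Lambda G$), so $\operatorname{supp}(\mu_v)\neq\emptyset$.

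Finally, I would apply the well-known fact (recalled in the introduction of the paper) that $G$ acts minimally on $\Lambda G$: every nonempty closed $G$-invariant subset of $\Lambda G$ equals $\Lambda G$. Applied to $F$, this yields $\operatorname{supp}(\mu_v)=F=\Lambda G$, as required. The only subtlety I anticipate is making the step ``$\mu_v$ and $\mu_w$ have the same support'' rigorous in the non-finite case at parabolic directions, but this is purely local — one only needs the Radon--Nikodym derivative to be positive on a set of full $\mu_w$-measure, and the quasi-conformal inequality provides this directly.
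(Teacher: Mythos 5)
Your proof is correct and follows essentially the same route as the paper's (the standard one): equivariance plus mutual absolute continuity of the $\mu_v$'s shows the common support is a nonempty closed $G$-invariant set, and minimality of the $G$-action on $\Lambda G$ finishes the argument. The Radon--Nikodym bound is uniform since $|B_\xi(vo,wo)|\le d(vo,wo)$, so the subtlety you flag at the end is indeed harmless.
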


Here is a well-known construction, due to Patterson \cite{Patt}, of a quasi-conformal density. 
We start by constructing a family of measures $\{\mu_v\}^s_{v\in G}$ supported on $Go$ for any $s >\g G$. If $\PS{G}$ is divergent at $s=\g G$, set
$$\mu^s_v = \frac{1}{\PS{G}} \sum\limits_{g \in G} \exp(-sd(vo, go)) \cdot \dirac{go},$$
where $s >\g G$ and $v \in G$. Note that $\mu^s_1$ is a probability
measure. Choose $s_i \to \g G$ such that $\mu_v^{s_i}$ are convergent in
$\mathcal M(\Lambda G)$. Let $\mu_v = \lim \mu_v^{s_i}$ be the limit
measures, which are so called \textit{Patterson-Sullivan measures} (PS-measures as shorthand).  

In the case that $\PS{G}$ is convergent at $s=\g G$, S. Patterson invented a trick to construct $\mu^s_v$ similarly such that the limit $\mu_v$ are supported on $\pG$. Since in our case $G$ is assumed to be divergent type, we omit the discussion of this case and refer the reader to \cite{Patt}.

\begin{prop}\cite[Th\'eor\`eme 5.4]{Coor}
PS-measures $\{\mu_v\}_{v\in G}$  on $\pG$   is a $\g G$-dimensional quasi-conformal density.
\end{prop}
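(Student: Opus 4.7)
The plan is to verify both defining properties of a quasi-conformal density, namely $G$-equivariance and the Radon-Nikodym estimate, first at the level of the approximating finite-sum measures $\mu^s_v$ for $s>\g G$, and then pass to the weak-$\ast$ limit along the chosen sequence $s_i\to \g G$. The key analytic input is that for $go$ approaching a boundary point $\xi\in\pG$, the quantity $d(vo,go)-d(wo,go)$ is comparable, up to an additive error depending only on $\delta$, to a Busemann-type function $B_\xi(vo,wo)$; this is the standard $\delta$-hyperbolic substitute for the exact cocycle identity available in CAT($-1$) spaces.

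For $G$-equivariance, I would compute directly on the Dirac-sum approximants. For any Borel set $A\subset\Lambda G$ and $h\in G$,
\begin{equation*}
h_*\mu^s_v(A)=\frac{1}{\PS{G}}\sum_{g\in G}\exp(-s\,d(vo,go))\,\delta_{hgo}(A)=\frac{1}{\PS{G}}\sum_{g'\in G}\exp(-s\,d(hvo,g'o))\,\delta_{g'o}(A)=\mu^s_{hv}(A),
\end{equation*}
using the substitution $g'=hg$ and the isometric action $d(vo,go)=d(hvo,hgo)$. Weak-$\ast$ convergence on the compact space $\Lambda G$ then gives $h_*\mu_v=\mu_{hv}$ for all $h,v\in G$.

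For the quasi-conformal density estimate, observe that the Radon-Nikodym derivative between the atomic measures $\mu^s_v$ and $\mu^s_w$ at the atom $go$ is exactly $\exp\bigl(-s(d(vo,go)-d(wo,go))\bigr)$. Using $\delta$-thinness of triangles, for any fixed $v,w\in G$ there is a constant $C=C(\delta,d(vo,wo))$ such that whenever $go$ lies close to a boundary point $\xi\in\pG$ in the sense that $(go,\xi)_o$ is large,
\begin{equation*}
\bigl|\,d(vo,go)-d(wo,go)-B_\xi(vo,wo)\,\bigr|\le C.
\end{equation*}
Taking a continuous test function $f$ supported in a small shadow at infinity around $\xi$ and letting $s=s_i\to\g G$, the ratios $\mu^{s_i}_v(f)/\mu^{s_i}_w(f)$ are trapped between $\exp(\pm\g G\,C)\exp(-\g G\,B_\xi(vo,wo))$, which by a standard differentiation argument yields
\begin{equation*}
\frac{d\mu_v}{d\mu_w}(\xi)\asymp\exp(-\g G\,B_\xi(vo,wo))
\end{equation*}
for $\mu_w$-almost every $\xi\in\Lambda G$, as required.

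The main obstacle in writing this out carefully is precisely the ambiguity of the Busemann function in the Gromov-hyperbolic (as opposed to CAT($-1$)) setting: one must verify that the additive $\delta$-error in the identification $d(vo,go)-d(wo,go)\approx B_\xi(vo,wo)$ indeed produces only a bounded \emph{multiplicative} constant in the Radon-Nikodym derivative, independent of $\xi$. The fact that $G$ is of divergent type simplifies the construction since Patterson's perturbation trick is unnecessary and the measures $\mu_v$ are automatically supported on $\Lambda G$. Since the dimension $\g G$ is finite by hypothesis and $\mu^{s_i}_1$ is a probability measure, the limit $\mu_1$ lies in $\mathcal M(\Lambda G)$, and the equivariance established above shows the same for each $\mu_v$.
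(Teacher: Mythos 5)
Your argument is correct and is essentially the standard Patterson--Sullivan construction; note that the paper does not reprove this proposition but cites Coornaert's Th\'eor\`eme 5.4, and your two steps (equivariance checked on the atomic approximants $\mu^s_v$ and then passed to the weak-$*$ limit, plus the comparison of $d(vo,go)-d(wo,go)$ with the Busemann cocycle on test functions supported near $\xi$) are exactly the steps of that cited proof. The one point worth tightening is your constant $C=C(\delta,d(vo,wo))$: since $d(vo,go)-d(wo,go)=2(wo,go)_{vo}-d(vo,wo)$ and the Gromov product $(wo,go)_{vo}$ stabilizes to $(wo,\xi)_{vo}$ up to an error of order $\delta$ as $go\to\xi$, the bound $\bigl|d(vo,go)-d(wo,go)-B_\xi(vo,wo)\bigr|\le C(\delta)$ holds with $C$ independent of $v,w$, which is what you need for the implied constant in $\frac{d\mu_v}{d\mu_w}(\xi)\asymp\exp(-\g G B_\xi(vo,wo))$ to be uniform over all pairs, as in Coornaert's definition; a constant depending on $d(vo,wo)$ would only give a pairwise, non-uniform quasi-conformality.
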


\begin{lem}[Shadow Lemma]\label{OmbreLem}\cite[Proposition 6.1]{Coor}
Let $\{\mu_v\}_{v\in G}$ be  $\sigma$-dimensional quasi-conformal density on $\pG$ for $\sigma>0$. There exists $r_0 >0$ such that the following holds,
$$
\exp(-\sigma d(go, o))  \prec \mu_1(\Pi_r(go)) \prec_r \exp(-\sigma d(go,
o)),
$$
 for any $g \in G$ and $r >r_0$.
\end{lem}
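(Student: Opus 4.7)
The proof splits into matching upper and lower bounds, with different ingredients for each.

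\textbf{Upper bound via equivariance.} For any $\xi \in \Pi_r(go)$ the geodesic $[o,\xi]$ passes within $r$ of $go$, so by the definition of the Busemann cocycle one obtains $|B_\xi(o,go) - d(o,go)| \le 2r$. Substituting this into the quasi-conformality relation
\[
\frac{d\mu_1}{d\mu_g}(\xi) \;\asymp\; \exp\bigl(-\sigma B_\xi(o,go)\bigr)
\]
and integrating over $\Pi_r(go)$ yields
\[
\mu_1(\Pi_r(go)) \;\asymp_r\; \exp(-\sigma d(o,go)) \cdot \mu_g(\Pi_r(go)).
\]
Since $\mu_g = g_*\mu_1$ by $G$-equivariance, $\|\mu_g\| = \|\mu_1\|$, hence $\mu_g(\Pi_r(go)) \le \|\mu_1\|$, which gives the upper bound immediately.

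\textbf{Lower bound via a small-complement argument.} In view of the asymptotic above, it suffices to show that $\mu_g(\Pi_r(go)) \succ 1$ uniformly in $g$ once $r$ is large enough. Using equivariance again, $\mu_g(\Pi_r(go)) = \mu_1(g^{-1}\Pi_r(go))$, and the complement
\[
E_r(g) \;:=\; \pG \setminus g^{-1}\Pi_r(go)
\]
consists of boundary points $\eta$ such that every geodesic from $g^{-1}o$ to $\eta$ avoids $B(o,r)$. A standard application of $\delta$-thin triangles (Lemma \ref{thintri}) forces the rays $[o,\eta]$ and $[o,g^{-1}o]$ to fellow-travel for distance at least $r - O(\delta)$; hence $E_r(g)$ is contained in a single visual ball (based at $o$) whose radius decays like $e^{-a r}$ as $r \to \infty$.

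\textbf{Finishing with non-elementarity.} Suppose the uniform lower bound failed. Then along a sequence $g_n$ with $\mu_1(E_r(g_n)) \to \|\mu_1\|$, the shrinking visual balls (each containing $E_r(g_n)$) would force a subsequential limit $\eta_\infty$ of $g_n^{-1}o$ to carry the full mass of $\mu_1$, i.e.\ $\mu_1 = \|\mu_1\|\cdot \delta_{\eta_\infty}$. But $G$-equivariance of $\{\mu_v\}$ and the minimality of $G \curvearrowright \Lambda G$ (with $\sharp \Lambda G > 3$ since the action is non-elementary) would then spread this atom to infinitely many points of uniform mass, contradicting the finiteness of $\mu_1$. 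Therefore $\mu_1(E_r(g))$ is uniformly bounded away from $\|\mu_1\|$ for $r$ large, giving $\mu_g(\Pi_r(go)) \succ 1$ as required.

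\textbf{Main obstacle.} The delicate step is the lower bound: one must extract the uniform decay rate of the visual diameter of $E_r(g)$ from $\delta$-hyperbolicity alone and then exclude atomic concentration in the limit via the non-elementary dynamics of $G$ on $\Lambda G$. The upper bound is essentially a bookkeeping exercise once the Busemann estimate is in hand.
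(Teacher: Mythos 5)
The paper offers no proof of this lemma --- it is quoted from Coornaert \cite{Coor} --- so your argument stands on its own; what you have written is the standard Patterson--Sullivan shadow-lemma proof and it is essentially correct. The upper bound (Busemann estimate $|B_\xi(o,go)-d(o,go)|\le 2r$ on the shadow, plus $\|\mu_g\|=\|\mu_1\|$) is fine, and the lower-bound strategy --- reduce to $\mu_1(g^{-1}\Pi_r(go))\succ 1$, bound the visual diameter of the complement $E_r(g)$ by $e^{-ar}$ via the Gromov product $(\eta,\eta')_o\ge r-O(\delta)$, and rule out a Dirac limit --- is exactly the classical route. Two points deserve tightening. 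First, your contradiction hypothesis fixes $r$ while invoking ``shrinking visual balls''; with $r$ fixed the balls have fixed radius $\asymp e^{-ar}$ and nothing shrinks. The correct negation is that for every $n$ there is $g_n$ with $\mu_{g_n}(\Pi_n(g_no))<1/n$ (it suffices to produce one good $r_0$, since $\Pi_r(go)$ increases in $r$), and one should also dispose of the case $d(o,g_n^{-1}o)$ bounded, where $E_r(g_n)=\emptyset$ for $r$ large. Second, the closing step ``spread this atom to infinitely many points of uniform mass, contradicting finiteness'' is not quite right: the masses $\mu_1(\{g\eta_\infty\})\asymp\exp(-\sigma B_{\eta_\infty}(g^{-1}o,o))\,\|\mu_1\|$ are not uniform and their sum could a priori converge. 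The clean contradiction is that mutual absolute continuity of $\mu_1$ and $\mu_g$ gives $\mu_1(\{g\eta_\infty\})>0$ for every $g$, while $\mu_1(\{\eta_\infty\})=\|\mu_1\|$ forces every other singleton to be null; hence $g\eta_\infty=\eta_\infty$ for all $g$, contradicting the standing assumption that $G$ fixes no point of $\pG$ (minimality with $\sharp\pG>3$). Both repairs are routine, so I would count the proof as correct in substance.
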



\begin{lem}[Exponential growth]\label{ballgrowthII}\cite[Proposition 6.4]{Coor}
There exists $\Delta_0 >0$ such that the following holds
$$
\sharp A(o, n, \Delta) \prec_\Delta \exp(n\g G),
$$
for any $n \ge 0$ and $\Delta >\Delta_0$.
\end{lem}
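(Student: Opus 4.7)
The plan is the standard covering argument using the Shadow Lemma. First, fix any $r > r_0$ where $r_0$ is the constant from Lemma \ref{OmbreLem}, so that
\[
\mu_1(\Pi_r(go)) \asymp_r \exp\bigl(-\g G\, d(o, go)\bigr)
\]
for every $g \in G$. For $g \in A(o, n, \Delta)$ we have $n - \Delta \le d(o, go) < n + \Delta$, so
\[
\mu_1(\Pi_r(go)) \asymp_{r,\Delta} \exp(-\g G\, n).
\]

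The key geometric step is a bounded multiplicity statement: for a suitable choice of $\Delta_0$ and any $\Delta > \Delta_0$, every $\xi \in \pG$ lies in at most $N = N(r, \Delta)$ shadows $\Pi_r(go)$ with $g \in A(o, n, \Delta)$. To see this, suppose $\xi \in \Pi_r(go)$; then some geodesic $[o, \xi]$ passes within distance $r$ of $go$. Because $d(o, go) \in [n-\Delta, n+\Delta)$, the point $go$ lies within distance $r + \delta'$ of the point $x_n \in [o,\xi]$ at arc-length $n$ from $o$, where $\delta'$ absorbs the hyperbolic ambiguity between different geodesics $[o,\xi]$ (any two such geodesics have Hausdorff distance bounded by a constant depending only on $\delta$). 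Thus every $g \in A(o, n, \Delta)$ with $\xi \in \Pi_r(go)$ satisfies $d(go, x_n) \le r + \delta' + \Delta$, and since $G \curvearrowright X$ is proper, the number of such $g$ is bounded by a constant $N(r, \Delta)$ independent of $n$ and $\xi$.

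Combining these two facts and integrating over $\pG$,
\[
\sum_{g \in A(o, n, \Delta)} \mu_1(\Pi_r(go)) \;=\; \int_{\pG} \sharp\{g \in A(o, n, \Delta) : \xi \in \Pi_r(go)\}\, d\mu_1(\xi) \;\le\; N(r, \Delta)\, \mu_1(\pG),
\]
and the right-hand side is finite since $\mu_1$ is a finite Radon measure. Substituting the lower bound on each individual shadow measure yields
\[
\sharp A(o, n, \Delta)\cdot \exp(-\g G\, n) \prec_{r, \Delta} 1,
\]
which is the desired estimate.

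The main obstacle is the bounded multiplicity claim; this is purely where hyperbolicity (to control the locus of possible geodesics $[o,\xi]$) and properness of the action (to bound the number of orbit points in a ball of controlled radius) are both used. Everything else is a direct application of the Shadow Lemma and the $G$-equivariance already packaged into $\mu_1$. The choice of $\Delta_0$ simply has to ensure that $A(o, n, \Delta)$ is non-degenerate for all $n \ge 0$, after which the constant $N(r,\Delta)$ does the work.
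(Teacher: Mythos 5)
Your proof is correct and is essentially the argument behind the cited result (Coornaert, Proposition 6.4), which the paper does not reprove: the lower bound of the Shadow Lemma on each $\mu_1(\Pi_r(go))$ combined with the bounded multiplicity of the covering $\{\Pi_r(go): g\in A(o,n,\Delta)\}$ — the same multiplicity fact the paper later quotes as \cite[Lemme 6.5]{Coor} in the proof of Proposition \ref{ApproxShadowWithMeasure}. The only cosmetic slip is that $go$ lies within $r+\Delta+\delta'$ (not $r+\delta'$) of $x_n$, which your subsequent sentence already accounts for.
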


\subsection{PS-mesures have no atoms}
Let $\{\mu_v\}_{v\in G}$ be PS-measures on $\pG$. We shall show that $\mu_v$ has no atoms at parabolic points,
following an argument of Dal'bo-Otal-Peign\'e in \cite[Propositions
1 \& 2]{DOP}.  The first lemma is proved by the same proof of Proposition 1 in \cite{DOP}. 

\begin{lem}\label{convpara}
Let $H$ be a subgroup in $G$ such that $\Lambda H$ is properly
contained in $\Lambda G$. Then for any $o\in X$, $\PS{H}$ is convergent at $s=\g G$. In
particular, if $H$ is divergent, then $\g H < \g G$.
\end{lem}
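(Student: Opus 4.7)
The strategy is to use the Patterson--Sullivan measure $\mu_1$ on $\pX$ as a bookkeeping device, adapting the argument of Dal'bo--Otal--Peign\'e. Since $\Lambda H$ is closed and strictly contained in $\Lambda G$, I would pick $\xi \in \Lambda G \setminus \Lambda H$ and an open neighborhood $V \subset \pX$ of $\xi$ with $\bar V \cap \Lambda H = \emptyset$. By Coornaert's Corollaire~5.2 the support of $\mu_1$ equals $\Lambda G$, so $\mu_1(V) > 0$.

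The first step would be the mass estimate $\mu_1(h^{-1}V) \asymp_V \exp(-\g G \cdot d(o, ho))$ uniformly in $h \in H$. By equivariance $\mu_1(h^{-1}V) = \mu_h(V)$, and the $\g G$-quasi-conformal relation gives $\mu_h(V) \asymp \int_V \exp(-\g G \cdot B_\eta(ho, o))\, d\mu_1(\eta)$. In a $\delta$-hyperbolic space $B_\eta(ho, o) = d(o, ho) - 2(ho, \eta)_o + O(\delta)$, so it suffices to bound the Gromov product $(ho, \eta)_o$ uniformly for $\eta \in V$ and $h \in H$. If this failed, I could extract $h_n \in H$ and $\eta_n \in \bar V$ with $(h_n o, \eta_n)_o \to \infty$; then $h_n o$ and $\eta_n$ would share a common limit in $\pX$, which, being an accumulation point of $Ho$, must lie in $\Lambda H$, contradicting $\bar V \cap \Lambda H = \emptyset$.

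The second step would be a bounded-overlap property for $\{h^{-1}V : h \in H\}$. Since $H$ inherits a convergence action on $\pX$, a standard theorem of Tukia gives that $H$ acts properly discontinuously on the discontinuity domain $\pX \setminus \Lambda H$. As $\bar V$ is a compact subset of this domain, the set $F := \{h \in H : h \bar V \cap \bar V \neq \emptyset\}$ is finite. A cocycle argument then shows each $\eta \in \pX$ lies in at most $\sharp F$ of the sets $h^{-1}V$: if $h_1^{-1}\eta, h_2^{-1}\eta \in V$ then $h_1^{-1}h_2 \in F$, confining any admissible $h_2$ to $h_1 F$.

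Combining the two steps,
$$
\mu_1(V) \cdot \PS{H} \asymp \sum_{h \in H} \mu_1(h^{-1}V) = \int_{\pX} \sum_{h \in H} \chi_{h^{-1}V}\, d\mu_1 \le \sharp F \cdot \mu_1(\pX) < \infty,
$$
proving that $\PS{H}$ converges at $s = \g G$. The ``in particular'' clause is immediate, since divergence of $H$ means $\PS{H}$ diverges at $s = \g H$, which together with convergence at $\g G$ forces $\g H < \g G$. The main obstacle I anticipate is establishing the Busemann-to-Gromov-product identity with a uniform $O(\delta)$ error bound, but this is a routine computation in hyperbolic geometry.
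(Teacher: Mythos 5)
Correct, and essentially the paper's own argument (the paper simply invokes the proof of Proposition~1 of Dal'bo--Otal--Peign\'e): both proofs rest on the positivity $\mu_1(V)>0$ for a set $V$ of full-support measure lying off $\Lambda H$, the quasi-conformal lower bound $\mu_1(h^{-1}V)=\mu_h(V)\succ \exp(-\g G\, d(o,ho))\,\mu_1(V)$, and the finiteness of $\mu_1(\Lambda G)$ combined with bounded overlap of the translates $h^{-1}V$. The only cosmetic differences are that the paper takes $V=K$ to be a Borel fundamental domain for the properly discontinuous action of $H$ on $\Lambda G\setminus\Lambda H$ (so the translates are disjoint) where you use an open set with finite multiplicity controlled by $F=\{h: h\bar V\cap\bar V\ne\emptyset\}$, and that only the trivial one-sided estimate $B_\xi(ho,o)\le d(o,ho)$ is needed, so your two-sided Gromov-product refinement, while correct, is not required.
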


Recall that  $\pX$ denotes the Gromov boundary of $X$. Given a subset $Y \subset X$, denote by $\Lambda Y$ the boundary of $Y$ at infinity in $\pX$. The limit set of $G_Y$ coincides with $\Lambda Y$, since $G_Y$ acts co-compactly on $Y$ or $\partial Y$ in Convention \ref{ConvContracting}.

The content of next two lemmas are not surprising, but we could not find   explicit statements   so we collect them here at the convenience of the reader.   

\begin{lem}\label{cpctdomain}
Let $P:=G_Y$ be a subgroup in $G$ given in Convention \ref{ConvContracting}. Then $P$ acts co-compactly on $(X\cup \pX)\setminus ( Y\cup \Lambda Y)$ endowed with topology from $X\cup \pX$. 
\end{lem}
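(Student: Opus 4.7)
The plan is to produce a compact subset $K \subset (X\cup \partial X)\setminus (Y\cup \Lambda Y)$ whose $P$-translates cover this complement. The approach follows a standard pattern: use a coarse projection onto $Y$ to reduce to the $P$-co-compactness on $\partial Y$ (horoball case) or on $Y$ (otherwise) that is guaranteed by Convention \ref{ConvContracting}, then check with the contracting property that the resulting $K$ stays away from $Y\cup \Lambda Y$.

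First I fix a basepoint $o \in \partial Y$ when $Y \in \mathbb U$, and $o \in Y$ otherwise, so that there is a constant $M$ making $Po$ an $M$-net in $\partial Y$ (resp.\ $Y$). Second, I extend the nearest-point projection to a coarse $P$-equivariant map $\pi$ on the whole complement $(X\cup\partial X)\setminus (Y\cup \Lambda Y)$: for $x\in X\setminus Y$ take $\pi(x)$ as in Lemma \ref{contracting}(2); for $\xi \in \partial X\setminus \Lambda Y$, fix a geodesic ray $[o,\xi]$ and let $\pi(\xi)$ be its projection to $Y$. Because $\xi \notin \Lambda Y$ and $Y$ is quasiconvex, the ray eventually leaves every neighbourhood of $Y$, so by the contracting property Lemma \ref{contracting}(1) the diameter of $\pi(\xi)$ is uniformly bounded. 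Third, using the $P$-co-compactness on $\partial Y$ (resp.\ $Y$), for each $x$ in the complement there is $p \in P$ with $d(o, \pi(px))$ bounded by a uniform constant $M'$, and setting
\[
K := \bigl\{\, x \in (X \cup \partial X)\setminus (Y \cup \Lambda Y) : d(o, \pi(x)) \le M' \,\bigr\}
\]
we get $PK = (X\cup \partial X)\setminus (Y\cup \Lambda Y)$.

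The main obstacle, and the heart of the argument, is to verify that $K$ is relatively compact in the subspace topology, equivalently that any limit $\xi$ of a sequence $(x_n)\subset K$ in the ambient compact $X\cup \partial X$ still avoids $Y\cup \Lambda Y$. If $\xi \in \Lambda Y$, pick $y_n \in Y$ with $y_n\to \xi$; then $(x_n, y_n)_o\to \infty$, and by $\delta$-hyperbolicity combined with quasiconvexity of $Y$ the geodesics $[o,x_n]$ fellow-travel $Y$ for a time tending to infinity, which pushes $\pi(x_n)$ out to infinity along $Y$ and contradicts $d(o,\pi(x_n))\le M'$. If instead $\xi \in Y$, then in the horoball case $Y$ is open and the $X$-closure of $X\setminus Y$ meets $Y$ only in $\emptyset$ (since $\bar Y = Y\cup \partial Y$ and $\partial Y \subset X\setminus Y$), so $\xi \in \partial Y$ which lies in our complement; in the non-horoball case one refines $K$ by excising a small $P$-invariant open tubular neighbourhood of $Y$ and covers the resulting thin shell separately by the $P$-cocompact piece $\overline{N_\epsilon(Y)}\setminus Y \cap B(o,R)$.

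The technical core is thus the coarse-equivariance of $\pi$ and the contracting estimate Lemma \ref{contracting}, together with a Gromov-product fellow-traveling argument which prevents sequences with bounded projection from escaping to $\Lambda Y$. The remaining verification — that $\pi$ is well defined on boundary points $\xi\notin \Lambda Y$ and is coarsely $P$-equivariant — is routine in $\delta$-hyperbolic geometry once the contracting framework of Section 2.5 is in place.
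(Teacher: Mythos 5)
Your proof is correct, and for the non-horoball case it is essentially the paper's own argument: both take the preimage under nearest-point projection of a compact fundamental domain for $P\curvearrowright Y$, pass to its closure in $X\cup\partial X$, and rule out accumulation on $\Lambda Y$ by the same Gromov-product/fellow-traveling argument combined with quasiconvexity of $Y$. Where you genuinely diverge is the horoball case: the paper does not project there at all, but uses that $q=\Lambda Y$ is a bounded parabolic point, takes a compact fundamental domain $Q\subset\partial X\setminus\{q\}$ for the $P$-action on the boundary, and sets $\hat Q=\bigl(H(Q)\cap(X\setminus Y)\bigr)\cup Q$, where $H(Q)$ is the union of geodesics from $Q$ to $q$; compactness and the covering property are then nearly immediate from the construction. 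Your unified treatment instead invokes co-compactness of $G_U$ on the horosphere $\partial U$ (a fact the paper records separately before the lemma) and reruns the projection argument; this buys uniformity across the two cases at the cost of having to re-verify non-accumulation at $q$, which your Gromov-product argument does handle. Two small points: in defining $\pi(\xi)$ for $\xi\in\partial X\setminus\Lambda Y$ you should take (a point uniformly close to) the exit point of $[o,\xi]$ from $N_C(Y)$ rather than the projection of the whole ray, since the initial segment inside $N_C(Y)$ may have large projection; with that reading, bounded $d(o,\pi(\xi_n))$ does force the fellow-traveling time with $Y$ to stay bounded, as you need. And your concern that the closure of $K$ may meet $Y$ itself in the non-horoball case is legitimate but is equally present in the paper's proof, which only verifies $\hat Q\cap\Lambda Y=\emptyset$; your tubular-neighbourhood patch is an acceptable repair.
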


\begin{proof}
We first consider the case that $Y$ is a horoball in $\mathbb U$ so $\Lambda Y$ consists of one point $q$.    Since $q \in \pX$ is a bounded parabolic point,  $P$ acts on $\pX\setminus \{q\}$ with a compact fundamental domain $Q \subset \pX\setminus \{q\}$. Consider the set $H(Q)$ in $X$ which is the union of all geodesics with one endpoint in $Q$ and the other one ending at $q$. Then $\hat Q:=\Big(H(Q)\cap (X\setminus Y)\Big)\cup Q$ is compact and $P\cdot \hat Q=(X\cup \pX)\setminus (Y\cup q)$ by construction. This implies that $\hat Q$ is a compact fundamental domain.

Now assume that $P$ acts co-compactly on $Y$ with a compact fundamental domain $Q$. Let $\hat Q$ be the set of points $x\in X\setminus Y$ such that a projection point of $x$ to $Y$ belongs to $Q$. By construction, $H\cdot   \hat Q=X\setminus  Y$. Consider the topological closure of $\hat Q$ in $X\cup \pX$, still denoted by $\hat Q$, so we have $H\cdot   \hat Q=(X\cup \pX)\setminus  (Y\cup \Lambda Y)$. Hence, in order to prove that $\hat Q$ is a desired compact fundamental domain, it suffices to prove $\hat Q\cap \Lambda Y=\emptyset.$ 

By way of contradiction, assume that $\xi \in \hat Q\cap \Lambda Y$, so there exist $x_n\in  \hat Q\cap X, y_n\in Y$ such that $x_n\to \xi, y_n\to \xi$. By the convergence of Gromov topology (cf. subSection \ref{HypSSection}), we have $[o, x_n]$ fellow travels $[o, y_n]$ as long as possible as $n\to \infty$, for a basepoint $o\in Q$. Hence, there exist a constant $C$ depending on $\delta$ and a sequence of points $p_n \in [o, x_n]$ such that $d(p_n, [o, y_n])\le C$ and $d(o, p_n)\to \infty$. By the quasiconvexity of $Y$,  we have that $p_n$ stays within a uniform neighborhood of $Y$, say $y_n \in N_C(Y)$ for easy notations. As $d(o, p_n)\to \infty$ and $d(p_n, Y) \le C$, we got a  contradiction since  $x_n$ has   a shortest projection point in a compact set $Q \subset Y$.  Thus, $\hat Q\cap \Lambda Y=\emptyset$, completing the proof of lemma.
\end{proof}

To prove no atoms at parabolic points, we need the following observation. 
\begin{lem}\label{projection}
Let $U, V \subset X$ such that $\Lambda U \cap \Lambda V=\emptyset$. Then there exists a constant $M=M(U, V)>0$ such that
$\diam{\proj_V(U)}\le M$.
\end{lem}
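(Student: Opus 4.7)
The plan is to argue by contradiction: assume $\diam{\proj_V(U)} = \infty$ and derive $\Lambda U \cap \Lambda V \ne \emptyset$. Fix a basepoint $v_0 \in V$ (if $V = \emptyset$ the statement is trivial) and select sequences $u_n \in U$ with $v_n \in \proj_V(u_n)$ such that $d(v_0, v_n) \to \infty$. The statement implicitly needs a quasiconvexity-type hypothesis on $V$; the intended application is to members of $\mathbb Y$, which are quasiconvex by Convention \ref{ConvContracting}, so Lemma \ref{contracting} is available for $V$.

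First I would verify that $\{u_n\}$ is unbounded in $X$. If not, after passing to a subsequence by properness, $u_n \to u \in X$; since Lemma \ref{contracting}(2) bounds $\diam{\proj_V(x)}$ uniformly for $x$ in a compact set, the projections $v_n$ stay bounded, contradicting $d(v_0, v_n) \to \infty$. Passing to subsequences and using properness of $X$ once more, I may assume $u_n \to \eta \in \Lambda U$ and $v_n \to \zeta \in \Lambda V$ in $X \cup \pX$.

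The key step is to show $\eta = \zeta$, which gives the desired contradiction with $\Lambda U \cap \Lambda V = \emptyset$. I would apply Lemma \ref{contracting}(3) to the geodesic $\gamma_n = [u_n, v_0]$, whose terminal endpoint lies in $V$: there exists a uniform $M>0$ and points $w_n \in \gamma_n$ with $d(w_n, v_n) \le M$. Then $d(v_0, w_n) \ge d(v_0, v_n) - M \to \infty$. Since $w_n$ lies on a geodesic from $v_0$ to $u_n$, one has the identity $(u_n, w_n)_{v_0} = d(v_0, w_n) \to \infty$, so $u_n$ and $w_n$ converge to the same point of $\pX$. Because $d(w_n, v_n) \le M$ while $d(v_0, v_n) \to \infty$, we also have $(w_n, v_n)_{v_0} \to \infty$, so $w_n \to \zeta$. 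Combining, $\eta = \zeta \in \Lambda U \cap \Lambda V$, the sought contradiction.

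The main obstacle is not a serious one: it is really a matter of care about the ambient hypotheses. The statement as written does not mention any regularity of $U$ or $V$, yet the argument cannot proceed without at least quasiconvexity of $V$ (to invoke both Lemma \ref{contracting}(2) and Lemma \ref{contracting}(3)). No analogous hypothesis on $U$ is needed, since $U$ only enters through the sequence $u_n \in U$ used to detect $\eta \in \Lambda U$. In the paper's context this is automatic.
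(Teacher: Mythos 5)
Your argument is essentially correct under the extra hypothesis you impose, but it takes a different route from the paper's, and your claim that the lemma ``implicitly needs a quasiconvexity-type hypothesis on $V$'' is not right: the paper proves the statement for arbitrary subsets with disjoint limit sets. The paper's proof fixes $v\in V$ and first shows, by the same compactness-and-visual-metric contradiction you use at the end, that $d(v,[u,\bar u])\le M$ uniformly over all $u\in U$ and $\bar u\in\proj_V(u)$ (if not, both $u_n$ and $\bar u_n$ escape to infinity with $(u_n,\bar u_n)_v\to\infty$, forcing a common limit point in $\Lambda U\cap\Lambda V$). It then exploits only the nearest-point property: if $x\in[u,\bar u]$ realizes $d(v,[u,\bar u])$, then $d(u,x)+d(x,\bar u)=d(u,\bar u)\le d(u,v)\le d(u,x)+d(x,v)$ gives $d(x,\bar u)\le d(x,v)$, whence $d(v,\bar u)\le 2M$ and $\diam{\proj_V(U)}\le 4M$. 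No quasiconvexity of $V$ enters anywhere. Your route instead pushes the projection point $v_n$ onto the geodesic $[u_n,v_0]$ via Lemma \ref{contracting}(3), which is what forces you to assume $V$ quasiconvex; this is harmless for the paper's applications (where $V\in\mathbb Y$) but proves a strictly weaker lemma than the one stated. Two minor points: your appeal to Lemma \ref{contracting}(2) to bound $\{v_n\}$ when $\{u_n\}$ is bounded is not quite what that item says (it bounds $\diam{\proj_V(x)}$ for a single $x$); the simpler observation $d(v_0,v_n)\le d(v_0,u_n)+d(u_n,V)\le 2d(v_0,u_n)$ does the job. The rest of your Gromov-product bookkeeping ($(u_n,w_n)_{v_0}=d(v_0,w_n)$ and $(w_n,v_n)_{v_0}\to\infty$) is fine.
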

\begin{proof}
Fix a point $v \in V$.  We claim that there exists a finite number $M>0$ such that $d(v, [u, \bar u]) \le M$ for any $u \in U$ and a projection point $\bar u$ to $V$. Indeed, assume, by way of contradiction, that there exist infinitely many pairs $(u_n, \bar u_n) \in U \times V$ such that $d(v, [u_n, \bar u_n]) \ge n$ for $n \in \mathbb N$. Without loss of generality, assume that $u_n, \bar u_n$ converge to $\xi, \eta\in \pX$ respectively. Since $d(v, [u_n, \bar u_n]) \ge n$, we see that $\xi=\zeta$ by definition of visual metric  (cf. subSection \ref{HypSSection}.) However, this is a contradiction, as $\Lambda U\cap \Lambda V=\emptyset$. Thus, the claim is proved.

Let $x\in [u, \bar u]$ so that $d(v, x)=d(v, [u, \bar u])$. Since $\bar u$ is a shortest point in $V$ from $u$, we see that $d(x, \bar u) \le d(x, v)$.  By the claim above, $d(v, \bar u)\le d(v, x)+d(x, \bar u)\le 2M$ for any $u\in U$. Hence, $\diam{\proj_V(U)}\le 4M$. 
\end{proof} 

By Lemma
\ref{projection}, the following result is proved with the almost same proof as \cite[Lemma 4.10]{YANG7}.  
\begin{lem}\label{parabnoatom2}
Assume that $G$ is of divergent type. Then $\{\mu_v\}_{v\in G}$ gives zero measure to $\Lambda(P)$, where $P:=G_Y$ is given by Convention \ref{ConvContracting}. In particular, PS measures have no atoms at bounded parabolic points.
\end{lem}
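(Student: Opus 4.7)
The plan is to argue by contradiction. Suppose $\mu_1(\Lambda P) > 0$ for some $P = G_Y$ with $Y \in \mathbb Y$. Since $G$ is non-elementary and $\Lambda P = \Lambda Y$ is a proper closed $P$-invariant subset of $\Lambda G$, Lemma \ref{convpara} gives
\[
\sum_{h \in P} \exp(-\g G d(o, ho)) < \infty.
\]
The first step is to show that the $G/P$-translates $\{g_i \Lambda P\}_{[g_i] \in G/P}$ form a pairwise disjoint family in $\Lambda G$: distinct cosets yield $g_iY \ne g_jY$ (since the stabilizer of $Y$ in $G$ equals $P$), and the bounded intersection property from Convention \ref{ConvContracting} together with Lemma \ref{projection} force $\Lambda(g_iY) \cap \Lambda(g_jY) = \emptyset$. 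Since $\Lambda P = \Lambda Y$, the disjointness of $\{g_i \Lambda P\}$ follows.

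Next I would lower bound each piece using the quasi-conformal density property. From $|B_\xi(g^{-1}o, o)| \le d(o, go)$ for any $\xi$, we get
\[
\mu_1(g_i \Lambda P) = \mu_{g_i^{-1}}(\Lambda P) \asymp \int_{\Lambda P} \exp(-\g G B_\xi(g_i^{-1}o, o)) \, d\mu_1 \succ \exp(-\g G d(o, g_i o)) \, \mu_1(\Lambda P).
\]
Choosing each $g_i$ to be a shortest element of its coset $g_iP$ and summing over the disjoint pieces yields
\[
\mu_1(\Lambda P) \sum_{[g_i] \in G/P} \exp(-\g G d(o, g_i o)) \prec \mu_1(\Lambda G) < \infty,
\]
so the coset series $\sum_{[g_i]} \exp(-\g G d(o, g_i o))$ is finite.

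Finally, to reach a contradiction with the divergent type of $G$, I decompose each $g \in G$ as $g = g_i h$ with $g_i$ a shortest representative and $h \in P$. Since $g_i o$ lies within uniformly bounded distance of a nearest point of $g_iY$ to $o$ (by cocompactness of $P$ on $Y$) and $g_i h o \in g_i Y$, Lemma \ref{contracting}(3) places $g_i o$ within bounded distance of every geodesic $[o, g_i h o]$. This produces the additivity
\[
d(o, g_i h o) = d(o, g_i o) + d(o, h o) + O(1),
\]
whence
\[
\infty = \sum_{g \in G} \exp(-\g G d(o, go)) \asymp \Bigl( \sum_{[g_i] \in G/P} \exp(-\g G d(o, g_i o)) \Bigr) \Bigl( \sum_{h \in P} \exp(-\g G d(o, h o)) \Bigr).
\]
Both factors on the right are finite, a contradiction. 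The main obstacle is precisely this additive decomposition, whose proof relies on the contracting geometry of $g_i Y$ and on Lemma \ref{projection} to rule out interference from other elements of $\mathbb Y$ when identifying the shortest coset representative with a coarse projection onto $g_i Y$.
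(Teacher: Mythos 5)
Your argument is correct, but it follows a genuinely different route from the paper's. The paper works directly with the pre-limit orbital measures $\mu_1^s$: using Lemma \ref{cpctdomain} it builds shrinking open neighborhoods $V_n=\bigcup_{d(o,po)>n}pK$ of $\Lambda Y$ from translates of a compact fundamental domain $K$, bounds $\mu_1^s(V_n)$ by $\mu_1^s(K)$ times the tail $\sum_{d(o,po)>n}\exp(-sd(o,po))$ of the parabolic Poincar\'e series, and then lets $s\to\g G$ and $n\to\infty$, invoking Lemma \ref{convpara}. You instead argue by contradiction from $\mu_1(\Lambda P)>0$: disjointness of the coset translates $g_i\Lambda P$ plus quasi-conformality makes the coset series $\sum_{[g_i]}\exp(-\g G d(o,g_io))$ summable, and a coarse product decomposition of $\PS{G}$ at $s=\g G$ then contradicts divergence. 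Your version has the advantage of applying to an arbitrary $\g G$-dimensional quasi-conformal density rather than only to the Patterson limit measures, at the cost of the extra geometric input $d(o,g_iho)=d(o,g_io)+d(o,ho)+O(1)$; this does hold uniformly via Lemma \ref{contracting}(3), once one checks that the shortest coset representative $g_io$ is a coarse projection of $o$ to $g_iY$ and that $g_iPo$ lies at bounded Hausdorff distance from $g_iY$ (or $g_i\partial Y$). Two small points to tighten: Lemma \ref{projection} is stated in the converse direction (disjoint limit sets imply bounded projection), so the disjointness of $\Lambda(g_iY)$ and $\Lambda(g_jY)$ should instead be deduced directly from bounded intersection (a common limit point would force arbitrarily long fellow-travelling of geodesics lying in $N_\epsilon(g_iY)$ and $N_\epsilon(g_jY)$, which is impossible); and for finitely many cosets the basepoint $o$ may sit inside $g_iY$, but those terms do not affect convergence of either series.
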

\begin{proof}[Sketch of the proof]We only focus on the difference with \cite[Lemma 4.10]{YANG7} and refer the reader there for more details.
 
By Lemma \ref{cpctdomain}, $P$ acts on $(X\cup \pX)\setminus (Y\cup \Lambda Y)$ with a compact fundamental domain $K$. We can further assume that   the boundary of $K$ in $X\cup \pX$ is $\mu_v$-null for some (hence any) $v\in
G$. For convenience, we may choose the basepoint $o\in \partial Y$ if $Y$ is a horoball,  otherwise $o\in Y$.    Up to a translation by an element in $P$,
we further assume that $o \in \proj_{Y}(K\cap Go)$.   Let $$V_n = \bigcup\limits_{d(o, po) > n}^{p\in P} p K.$$  Then $V_n \cup \Lambda Y$
is a decreasing sequence of open neighborhoods of $\Lambda Y$. Since the
boundary of $V_n$ is $\mu_1$-null, it follows that $\mu_1^{s}(V_n)
\to \mu_1(V_n)$ for each $V_n$, as $s \to \g G$.


With Lemmas \ref{contracting} and \ref{projection} in hand, we can proceed as \cite[Lemma 4.10]{YANG7} to get
$$
\begin{array}{rl}
\mu_1^s(V_n) & \prec   \mu_1^s(K) \cdot\left( \sum\limits_{d(o, po) > n}^{p\in P}  \exp(-sd(o, po))
\right).
\end{array}
$$

Letting $s \to \g G$ we have $\mu_1^s(V_n) \to \mu_1(V_n)$. By Lemma \ref{convpara}, $\PS{P}$ is convergent at $s=\g G$ so that $\mu_1(V_n) \to 0$ as $n\to \infty$. Hence, $\mu_1(\Lambda P) =0$.
\end{proof}

Fix $\Delta>1$ and consider $G = \cup_{i\ge 1} A(o, i, \Delta)$, where $A(o, i, \Delta)$ is given in (\ref{OrbitFunction}). By Lemma
\ref{charconical}, we have that $\Lambda^cG = \Lambda_r$ for any fixed $r \gg 0$, where
\begin{equation}\label{conicalset}
\Lambda_r:=\bigcap_{n=1}^{\infty} \left(\bigcup_{g\in \cup_{i\ge n} A(o, i, \Delta)}
\Pi_r(go)\right). 
\end{equation}  In
other words, a conical point is shadowed infinitely many times
by the orbit $Go$.
 
\begin{lem}\label{conicnoatom}
Conical points are not atoms of PS-measures.
\end{lem}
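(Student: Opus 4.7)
The plan is to use the Sullivan Shadow Lemma (Lemma \ref{OmbreLem}) directly together with the conical-point characterization (Lemma \ref{charconical}). The point is that a conical point is shadowed by orbit-balls of arbitrarily large distance, and each such shadow has small PS-measure.

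First, I would enlarge the constant $r$ if necessary so that $r > r_0$, where $r_0$ is the constant from the Shadow Lemma, and simultaneously $r$ is larger than the constant from Lemma \ref{charconical}. Then, given a conical point $\xi \in \Lambda^c G$, Lemma \ref{charconical} gives a sequence of pairwise distinct elements $g_n \in G$ such that any geodesic ray from $o$ to $\xi$ meets $B(g_n o, r)$ for all but finitely many $n$. Since the action $G \curvearrowright X$ is proper and the $g_n$ are pairwise distinct, we have $d(o, g_n o) \to \infty$.

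By the definition of shadow, the property above translates to $\xi \in \Pi_r(g_n o)$ for all sufficiently large $n$. Since $\{\xi\} \subset \Pi_r(g_n o)$, this yields the monotonicity bound
$$
\mu_1(\{\xi\}) \le \mu_1(\Pi_r(g_n o)).
$$
Applying the Shadow Lemma (Lemma \ref{OmbreLem}) with $\sigma = \g G > 0$ gives
$$
\mu_1(\Pi_r(g_n o)) \prec_r \exp(-\g G \cdot d(o, g_n o)).
$$
Combining, $\mu_1(\{\xi\}) \prec_r \exp(-\g G \cdot d(o, g_n o))$ for every $n$ large. Letting $n \to \infty$ and using $d(o, g_n o) \to \infty$ together with $\g G > 0$, we conclude $\mu_1(\{\xi\}) = 0$, as desired.

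There is no real obstacle here; the argument is essentially a one-line application of the Shadow Lemma once the conical point is expressed via an escaping sequence of shadows. The only subtlety is ensuring that the radius $r$ is simultaneously large enough for both Lemma \ref{charconical} and the Shadow Lemma, which is immediate since both constants can be taken as a common upper bound.
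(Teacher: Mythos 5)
Your argument is correct and is essentially the paper's own proof: the paper likewise notes that a conical point lies in infinitely many shadows $\Pi_r(go)$ with $d(o,go)\to\infty$ and that these shadows have measure tending to $0$ by the Shadow Lemma. Your write-up merely makes explicit the choice of $r$ and the role of $\g G>0$, which the paper leaves implicit.
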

\begin{proof}
Note that a conical point $\xi$ lies in infinitely many shadows $\Pi_r(go)$ for $g\in G$. As $\mu_1(\Pi_r(go))\to 0$ as $d(o, go)\to \infty$ we have that $\mu_1(\xi)=0$. 
\end{proof}

As $G$ acts  geometrically finitely on $\Lambda G$, there exist only bounded
parabolic points and conical points in $\Lambda G$. Hence, Lemmas
\ref{parabnoatom2} and \ref{conicnoatom} together prove the
following proposition, where the ``moreover" statement is proved in \cite[Appendix: Proposition A.4]{YANG7}.

\begin{prop}\label{PSMeasureX}
Assume that $G$ is of divergent type. Then
PS-measures $\{\mu_v\}_{v\in G}$ are $\g G$-dimensional quasi-conformal density
without atoms. Moreover, $\mu$ is unique and ergodic.
\end{prop}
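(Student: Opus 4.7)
The proposal is to separate the statement into three independent claims and discharge each one. The statement that $\{\mu_v\}_{v\in G}$ forms a $\g G$-dimensional quasi-conformal density has already been extracted from Coornaert's construction (\cite[Th\'eor\`eme 5.4]{Coor}) and so can be invoked directly; the work is in the ``without atoms'' assertion and the ``moreover'' part. For the atoms, first I would observe that since the cusp-uniform action is geometrically finite on $\Lambda G$, every boundary point is either conical or bounded parabolic. Lemma \ref{conicnoatom} already rules out atoms at conical points using only the Shadow Lemma \ref{OmbreLem}. Lemma \ref{parabnoatom2} rules out atoms at bounded parabolic points under the divergent-type assumption, which is exactly the standing hypothesis here. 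Combining the two covers all of $\Lambda G$, and since PS-measures are supported on $\Lambda G$, there are no atoms.

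For uniqueness and ergodicity, the plan is to follow a Hopf--Tsuji--Sullivan style dichotomy, adapted to the coarse hyperbolic setting by Coornaert--Roblin. First I would verify that $\mu_1$-almost every point of $\Lambda G$ is a conical point. Using the description (\ref{conicalset}) of $\Lambda_r$ as a $\limsup$ of shadows, the Shadow Lemma gives $\mu_1(\Pi_r(go)) \asymp_r \exp(-\g G\, d(o,go))$, and the Poincar\'e series $\PS{G}$ diverges at $s=\g G$ by the divergent-type hypothesis; a Borel--Cantelli-type argument then forces $\mu_1(\Lambda^c G)= \mu_1(\Lambda G)$. Combined with the no-atom statement just proved, this means PS-measures are concentrated on $\Lambda^c G\setminus \{\text{parabolic points}\}$.

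Next I would deduce ergodicity of the $G$-action on $(\Lambda G, \mu_1)$ by a Vitali-type argument: given a $G$-invariant Borel set $A$ with $\mu_1(A)>0$, use density of $A$ at a conical point $\xi\in A$ together with a sequence $g_n\in G$ realizing $\xi$ as conical to pull back shadows $\Pi_r(g_n o)$ and compare $\mu_1$-masses via the quasi-conformal property (\ref{cdensity}), obtaining $\mu_1(A)=\mu_1(\Lambda G)$. Once ergodicity is established, uniqueness of the $\g G$-dimensional quasi-conformal density (up to a positive scalar) is a standard consequence: if $\nu$ were another such density, the Radon--Nikodym derivative $d\nu/d\mu_1$ would be $G$-invariant by the conformal relations, hence $\mu_1$-a.e.\ constant by ergodicity.

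The main obstacle I anticipate is the ergodicity step, because the Vitali covering argument needs approximate disjointness of the shadows $\Pi_r(g_no)$ along a conical sequence, which in a coarse hyperbolic (as opposed to CAT$(-1)$) setting requires the partial shadow lemma (Lemma \ref{PSLem}) or an analogous control on how shadows overlap near parabolic cusps. This is precisely where having no atoms on $\Lambda P$ (for $P\in\mathcal P$) is needed, since it guarantees that the $\mu_1$-mass of the shadows concentrates on the transitional, non-parabolic part; this is why the divergent-type hypothesis enters twice. Since the paper defers the detailed argument to \cite[Appendix: Proposition A.4]{YANG7}, I would simply carry out the no-atom argument here in full and cite that appendix for the Hopf-style ergodicity and uniqueness.
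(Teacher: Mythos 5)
Your proposal matches the paper's proof: the quasi-conformality is cited from Coornaert, the absence of atoms follows by combining Lemma \ref{conicnoatom} (conical points, via the Shadow Lemma) with Lemma \ref{parabnoatom2} (bounded parabolic points, via divergence) together with geometric finiteness, and the uniqueness/ergodicity is deferred to \cite[Appendix: Proposition A.4]{YANG7}, exactly as the paper does. Your additional Hopf--Tsuji--Sullivan sketch for the ``moreover'' part is a reasonable outline of what that appendix carries out, so there is nothing to correct.
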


\subsection{Partial Shadow Lemma}

Recall that in  Convention \ref{ConvContracting},   $\mathbb Y$ is a $G$-finite   system of quasiconvex subsets with   bounded intersection   so that $\mathbb U\subset \mathbb Y$, for which we consider the transition points. We shall prove a variant of Shadow Lemma called \textit{Partial Shadow Lemma} with respect to the so-defined transition points.    

Let's prepare some preliminary results. The following lemma will be used crucially in the sequel.

\begin{lem}\label{convps2}
For any $\varepsilon, r >0$, there exists $R=R(\varepsilon, r)>0$ such
that the following holds
$$
\mathcal{S}_{Y}(z, w,R)< \varepsilon,
$$
for any $Y \in \mathbb Y$ and any $z, w \in N_r(\partial Y)$.
\end{lem}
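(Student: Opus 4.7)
The plan is to use the $G$-finiteness of $\mathbb Y$ to reduce to finitely many orbit representatives, and then show that for each representative the Poincar\'e series of the stabilizer converges at $s=\g G$, whence its tail can be made uniformly small in $R$.

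First, I would fix representatives $Y_1,\dots,Y_N$ of the $G$-orbits on $\mathbb Y$ and basepoints $o_i\in \partial Y_i$. For an arbitrary $Y=gY_i\in \mathbb Y$ with $z,w\in N_r(\partial Y)$, equivariance under conjugation (via $G_Y=gG_{Y_i}g^{-1}$) gives
$$\mathcal{S}_{Y}(z,w,R) = \mathcal{S}_{Y_i}(g^{-1}z,g^{-1}w,R),$$
with $g^{-1}z,g^{-1}w\in N_r(\partial Y_i)$. Applying Lemma \ref{ConvertSA}(2) at the basepoint $o_i$ furnishes $K=K(r)>0$ such that
$$\mathcal{S}_{Y_i}(g^{-1}z,g^{-1}w,R)\le \mathcal{S}_{Y_i}(o_i,o_i,R-K).$$

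Second, I would verify that the full Poincar\'e sum $\sum_{h\in G_{Y_i}}\exp(-\g G\, d(o_i,h o_i))$ is finite for each $i$. If $Y_i\in \mathbb U$, then $G_{Y_i}$ is a maximal parabolic subgroup and convergence at $s=\g G$ is exactly the parabolic convergence property \eqref{PCP}, which holds automatically (Lemma \ref{convpara}). If $Y_i\in \mathbb Y\setminus \mathbb U$, then by Convention \ref{ConvContracting} the stabilizer $G_{Y_i}$ acts cocompactly on $Y_i$, so $\Lambda G_{Y_i}=\Lambda Y_i$; I would argue this is properly contained in $\Lambda G$ by excluding parabolic points (any parabolic $p\in \Lambda Y_i$ would force points of $Y_i$ to enter the horoball at $p$ to arbitrary depth, since the horoballs at $p$ form a neighborhood basis of $p$, contradicting bounded intersection since $Y_i\ne U$). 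Then Lemma \ref{convpara} yields convergence.

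Third, since only finitely many indices $i$ are in play, the tails $\mathcal{S}_{Y_i}(o_i,o_i,R-K)$ tend to $0$ as $R\to\infty$ uniformly in $i$. Choosing $R=R(\varepsilon,r)$ large enough so that each tail is at most $\varepsilon$ transfers the same bound to $\mathcal{S}_Y(z,w,R)$ for all $Y\in \mathbb Y$ and all $z,w\in N_r(\partial Y)$.

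The main obstacle will be the non-horoball case, where one must justify the strict inclusion $\Lambda Y_i\subsetneq \Lambda G$ before invoking Lemma \ref{convpara}. The bounded intersection hypothesis rules out parabolic points in $\Lambda Y_i$; in the absence of parabolic points one falls back on the explicit structure of $\mathbb Y$ (e.g.\ $|\Lambda \mathcal C(E(h))|=2$ for the prototypical example described above Convention \ref{ConvContracting}) together with non-elementarity of $\Lambda G$.
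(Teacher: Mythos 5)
Your proof is correct and follows essentially the same route as the paper: reduce to finitely many orbit representatives via $G$-finiteness, use Lemma \ref{ConvertSA}(2) to pass from arbitrary $z,w\in N_r(\partial Y)$ to the basepoint, and invoke Lemma \ref{convpara} (via properness of $\Lambda G_Y$ in $\Lambda G$, which the paper likewise deduces from bounded intersection) to get convergence and hence small tails. Your treatment of the non-horoball case is merely a more detailed version of the paper's one-line assertion that bounded intersection forces $\Lambda Y\subsetneq\Lambda G$.
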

\begin{proof}
Since $\mathbb Y/G$ is finite by Convention \ref{ConvContracting}, it is enough to prove the lemma for finitely many $Y\in \mathbb Y$. 

Since $\mathbb Y$ has the bounded intersection property, we see that the topological closure $\Lambda Y$ of $Y$ in $\pX$ is a proper subset  so that the limit set of $G_Y$ is also proper. By Lemma \ref{convpara}, we have
$$
\mathcal{S}_{Y}(o, o, 0)< \infty,
$$
which clearly concludes the proof by Lemma \ref{ConvertSA} (2).
\end{proof}

For each $Y \in \mathbb Y$, it is useful to consider the \textit{foot} $o_Y\in \partial Y$ of $o$, which is defined as a projection point of $o$ to $Y$. By Lemma \ref{contracting} (2), the foot is well-defined up to a bounded amount. Since $X\setminus \cup \mathbb U$ is $G$-cocompact by  definition of a cusp-uniform action, the next lemma is straightforward.

\begin{lem}\label{horoball}
There exists a constant $M >0$ with the following property.

For each $Y \in \mathbb Y$, there exists $t_Y \in G$ such that
$d(t_Y o, o_Y) \le M$ and $\partial Y \subset N_M((G_Y t_Y)\cdot
o)$.
\end{lem}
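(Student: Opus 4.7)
The plan is to reduce the claim to finitely many $G$-orbit representatives in $\mathbb Y$ and then combine the cusp-uniform cocompactness of $G$ on $\mathcal C(\Lambda G)\setminus \mathcal U$ with the $G_Y$-cocompactness on $\partial Y$ (or $Y$).

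By Convention \ref{ConvContracting} the system $\mathbb Y$ has only finitely many $G$-orbits, so it suffices to treat a representative $Y_i$ of each orbit and take $M$ to be the maximum of the resulting constants. For each $Y_i$ I pick an auxiliary point $y_i \in \partial Y_i$ with $y_i \notin \mathcal U$: if $Y_i \in \mathbb U$, any point on the bounding horosphere works; if $Y_i \in \mathbb Y \setminus \mathbb U$, such a point exists because the bounded intersection of $Y_i$ with every horoball forces $Y_i \setminus \mathcal U$ to be nonempty, while quasiconvexity of $Y_i$ together with $\Lambda Y_i \subset \Lambda G$ places $y_i$ within uniform distance of $\mathcal C(\Lambda G)$. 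Using the cocompactness of $G$ on $\mathcal C(\Lambda G) \setminus \mathcal U$, I find $s_i \in G$ with $d(s_i o, y_i) \le M_0$ for a uniform $M_0$.

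Since $G_{Y_i}$ acts cocompactly on $\partial Y_i$ (the horosphere if $Y_i \in \mathbb U$, and $Y_i$ itself otherwise), the orbit $G_{Y_i}\cdot y_i$ is $C_i$-dense in $\partial Y_i$, so I can pick $h_i \in G_{Y_i}$ with $d(h_i y_i, o_{Y_i}) \le C_i$ and set $t_{Y_i} := h_i s_i$. The triangle inequality then gives $d(t_{Y_i} o, o_{Y_i}) \le M_0 + C_i$, and since $G_{Y_i} t_{Y_i} o = (G_{Y_i} h_i)\cdot s_i o$ lies in the $M_0$-neighborhood of $G_{Y_i}\cdot y_i$, we conclude $\partial Y_i \subset N_{M_0+C_i}(G_{Y_i} t_{Y_i} o)$.

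For a general $Y = g Y_i$, the points $g o_{Y_i}$ and the foot $o_Y$ both lie in $\partial Y$ but need not coincide; using $G_Y = g G_{Y_i} g^{-1}$-cocompactness on $\partial Y$ I pick $h' \in G_Y$ with $d(h' g o_{Y_i}, o_Y) \le C_i$ and set $t_Y := h' g t_{Y_i}$. The required bounds follow from the representative case by equivariance, noting that $G_Y \cdot t_Y o = h' g \cdot G_{Y_i} t_{Y_i} o$. The main obstacle is the non-horoball case in the base step, where one must use bounded intersection and $\Lambda Y_i \subset \Lambda G$ to place $y_i$ in (or within uniform distance of) the cocompact region, so that cusp-uniform cocompactness of $G$ can be invoked.
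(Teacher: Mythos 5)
Your argument is correct and is exactly the ``straightforward'' argument the paper has in mind (it supplies no proof, citing only the cocompactness of $G$ on the complement of the horoballs): reduce to the finitely many $G$-orbit representatives, use $G_Y$-cocompactness on $\partial Y$ to spread one orbit point near $Go$ over all of $\partial Y$ and over the foot $o_Y$, then transport to a general $Y=gY_i$ by equivariance. The only superfluous (and slightly shaky) step is the base-point selection via $\mathcal C(\Lambda G)\setminus \mathcal U$ --- in particular the sub-claim that bounded intersection alone forces $Y_i\setminus\mathcal U\ne\emptyset$ --- which can be skipped entirely: since there are only finitely many representatives $Y_i$, any point of $\partial Y_i$ lies at finite distance from $Go$, and that is all the cocompactness propagation requires to produce a uniform $M$.
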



We now state the partial shadow lemma. We remark that the proof replies crucially on the fact that $\mu_1(\Lambda G_Y)=0$ in Lemma \ref{parabnoatom2}, which in turn  is proved using the divergence of the action of $G$ on $X$. 
\begin{lem}[Partial Shadow Lemma]\label{PShadowX}
Let   $\epsilon>0$ be
given by Lemma \ref{PairTransX}. There are constants $r, \epsilon, R\ge 0$ such that the following
holds 
\begin{equation}\label{Fombre2}
\exp(-\g G d(o, go)) \prec \mu_1(\Pi_{r, \epsilon, R}(go)) \prec_r \exp(-\g G d(o, go)),
\end{equation}
for any $g \in G$.
\end{lem}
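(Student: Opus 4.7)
The upper bound follows immediately: since $\Pi_{r,\epsilon,R}(go)\subseteq\Pi_r(go)$, Lemma \ref{OmbreLem} yields $\mu_1(\Pi_{r,\epsilon,R}(go))\prec_r\exp(-\g G d(o,go))$. For the lower bound I would fix the basepoint $o\in\mathcal C(\Lambda G)\setminus\mathcal U$, so that the orbit $Go$ stays in this $G$-cocompact region and each $go$ is at bounded distance from any $Y\in\mathbb Y$ it touches. Writing $\mu_1(\Pi_{r,\epsilon,R}(go))\ge\mu_1(\Pi_r(go))-\mu_1(B)$ with $B:=\Pi_r(go)\setminus\Pi_{r,\epsilon,R}(go)$ and invoking $\mu_1(\Pi_r(go))\succ\exp(-\g G d(o,go))$ from Lemma \ref{OmbreLem} (for $r\ge r_0$), it then suffices to show $\mu_1(B)\le\tfrac{1}{2}\mu_1(\Pi_r(go))$ whenever $R$ is chosen sufficiently large.

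\textbf{Identifying a trapping $Y$.} For $\xi\in B$, fix any geodesic $\gamma=[o,\xi]$ through $B(go,r)$, which by definition of $B$ has no $(\epsilon,R)$-transition point in $B(go,2R)$. Let $u\in\gamma\cap B(go,r)$; then $u$ must be $(\epsilon,R)$-deep in some $Y\in\mathbb Y$, and since $u\in N_\epsilon(Y)$ with $d(u,go)\le r$, one obtains the key bound
$$d(go,Y)\le r+\epsilon,$$
\emph{independent of} $R$. By $G$-finiteness of $\mathbb Y$ together with $go$ lying in the cocompact region, only $C_0$ many such $Y$ arise, uniformly in $g$. Either $\xi\in\Lambda Y$ (contributing zero $\mu_1$-mass by Lemma \ref{parabnoatom2}), or $\gamma$ exits $N_\epsilon(Y)$ at some finite $y$ with $d(u,y)\ge R$. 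Lemma \ref{horoball} then places $y$ within $M+\epsilon$ of $pt_Y o$ for some $p\in G_Y$, so $\xi\in\Pi_{r_1}(pt_Y o)$ with $r_1:=M+\epsilon$. Since $o,u,y$ are collinear on $\gamma$ with $u,y$ close respectively to $go,pt_Y o$, one derives
$$d(o,pt_Y o)\ge d(o,go)+R''\qquad(R'':=R-r-M-\epsilon)$$
and, by a hyperbolic triangle estimate, also $d(o,pt_Y o)\ge d(o,go)+d(go,pt_Y o)-C$ for a constant $C=C(r,M,\epsilon)$.

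\textbf{Tail estimate.} Applying Lemma \ref{OmbreLem} to each $\Pi_{r_1}(pt_Y o)$ and substituting both inequalities, the contribution to $\mu_1(B)$ from the $Y$-trapped part is at most $\exp(-\g G d(o,go))\cdot\mathcal S_Y(go,t_Y o,R'')$. Since $go\in N_{r+\epsilon}(Y)$ and $t_Y o\in N_M(\partial Y)$ with $r+\epsilon$ and $M$ both fixed, Lemma \ref{ConvertSA}(2) converts this to $\mathcal S_Y(o_Y,o_Y,R''-K)$ with a \emph{fixed} constant $K$ not growing with $R$; Lemma \ref{convps2} then makes the latter tail smaller than any prescribed $\varepsilon>0$ for $R$ sufficiently large, uniformly across the finitely many $G$-orbits of $Y$. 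Summing over the $\le C_0$ relevant $Y$ gives $\mu_1(B)\prec C_0\varepsilon\exp(-\g G d(o,go))<\tfrac{1}{2}\mu_1(\Pi_r(go))$ for $R$ large, completing the lower bound. The main obstacle throughout is precisely this uniformity in $g$: the whole proof hinges on the trapping $Y$ satisfying $d(go,Y)\le r+\epsilon$ (a fixed constant, not $2R+\epsilon$), which forces the use of the \emph{near-$go$} point $u$ on $\gamma$ rather than an arbitrary deep point in $B(go,2R)$, and forces $Go$ to stay inside the cocompact region $\mathcal C(\Lambda G)\setminus\mathcal U$.
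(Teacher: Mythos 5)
Your proof is correct and follows essentially the same route as the paper's: the upper bound from the inclusion $\Pi_{r,\epsilon,R}(go)\subseteq\Pi_r(go)$ plus Lemma \ref{OmbreLem}, and the lower bound by trapping the excluded set near one of boundedly many $Y\in\mathbb Y$ with $d(go,Y)\le r+\epsilon$, discarding $\Lambda Y$ via Lemma \ref{parabnoatom2}, covering the remainder by shadows $\Pi_{r_1}(pt_Yo)$ based at the exit points, and making the resulting tail of the parabolic series small via Lemma \ref{convps2}. The only cosmetic difference is that you bound the distance from $go$ to the exit point below by $R$ using deepness of the near-$go$ point, whereas the paper gets $2R-r$ by observing that the exit point is itself an $(\epsilon,R)$-transition point and hence excluded from $B(go,2R)$; both bounds grow with $R$ and serve the same purpose.
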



\begin{proof}
 Given any $g\in G$, there exist $r>M, C_1, C_2>0$ be given by the Shadow Lemma \ref{OmbreLem} such that
\begin{equation}\label{LBND}
C_1 \exp(-\g G d(o, go)) \le \mu_1(\Pi_r(go)) \le C_2 \exp(-\g G d(o, go)).
\end{equation}
 Denote by $\mathbb F$ the set of $Y \in \mathbb Y$ such
that $Y \cap B(go, r + \epsilon) \ne \emptyset$. Since $\mathbb Y$ is locally finite, we have that $\sharp \mathbb F$ is a uniform number depending only on $G$. The choice of the constant $R>0$ will be made in the remainder of proof.


Denote $\Xi:= \Pi_r(go) \setminus \Pi_{r, \epsilon, R}(go)$. 
For any $\xi \in \Xi$, any geodesic $\gamma=[o, \xi]$ does not contain an $(\epsilon,
R)$-transition point in the ball $B(go, 2R)$.

Since $\xi \in  \Pi_r(go)$, we can choose $x\in B(go, r) \cap \gamma$ such that $d(go, x) \le r$.  Assuming that $R>r$, we have that $x$ is $(\epsilon, R)$-deep in some $Y\in \mathbb Y$. Thus, $d(x, Y)\le \epsilon$  and   $d(go, Y) \le d(go, x) +d(x, Y) \le r + \epsilon$, so we have $Y
\in \mathbb F$.

Note that $\mu_1(\Lambda G_Y)=0$ by Lemma \ref{parabnoatom2}. Without loss of generality, assume that $\xi$ lies outside $\Lambda Y$ so that $\gamma$ exits every finite neighborhood of $Y \in \mathbb Y$. Let $z$ be the exit point of $\gamma$ in $N_\epsilon(Y)$. 
 
Furthermore, choose $R>\mathcal R(\epsilon)$, where $\mathcal R$ is the bounded intersection function of $\mathbb Y$.  By the claim in the proof of Lemma \ref{PairTransX}, $z$ is an $(\epsilon, R)$-transition point
in $[o, \xi]$. Since $d(go, x) < r$, we see that $d(z, x) > 2R-r$; otherwise, $B(go, 2R)$ contain the $(\epsilon, R)$-transition point $z$ in $\gamma$, a contradiction.

Let $M>0$ be a constant given by Lemma \ref{horoball}, so there exists $h\in G_Y$ and $t_Y\in G$ such that $d(t_Yo, Y)\le M$ and $d(h\cdot t_Y o, z)\le M\le r$. This implies 
\begin{equation}\label{EQzgo}
\begin{array}{lll}
d(h\cdot t_Y o, go) &>d(z, x)-d(z, h\cdot t_Y o)-d(x, go)\\
&\ge 2R-3r.
\end{array}
\end{equation}

We apply Lemma \ref{convps2} to pair of points $t_Y o, go \in N_{r+\epsilon}(\partial Y)$.
There exists $R_2>0$ depending on $r,\epsilon, \sharp \mathbb F$ such that
\begin{equation}\label{SUM}
\begin{array}{rl}
\mathcal {S}(t_Yo, go, R_2)   \cdot \sharp \mathbb F \cdot \exp(4\g G r) <
C_1/(2C_2).
\end{array}
\end{equation}
Since $z\in [x, \xi]_\gamma$ and $d(go, x)\le r$, we have $d(o, z) +2r \ge d(o, go) + d(go, z)$ by triangle inequality.
Hence, $$
\begin{array}{lll}
d(o, h\cdot t_Y o) \ge d(o, z) -r  \ge d(o, go) + d(go, h\cdot t_Y o) -4r.\\
\end{array}
$$
We assume that $2R-3r\ge R_2$. By (\ref{EQzgo}) and (\ref{SUM}), the following holds:  
\begin{equation}\label{UBND}
\begin{array}{rl}
\mu_1(\Xi) & \le \sum\limits_{Y \in \mathbb F}\left( \sum\limits_{h\in G_Y}^{d(h\cdot t_Yo, go) > R_2} \mu_1(\Pi_r(h\cdot t_Yo))\right) \\
& \le C_2\cdot \sum\limits_{Y \in \mathbb F} \left(\sum\limits_{h\in G_Y}^{d(h\cdot t_Yo, go) > R_2} \exp(-\g G d(o, h\cdot t_Yo))\right) \\
&\le C_2 \cdot \exp(-\g G d(o, go))\cdot \exp(4\g G r) \cdot \sum\limits_{Y \in \mathbb F} \mathcal {S}(t_Yo, go, R_2)  \\
& \le \exp(-\g G d(1, g)) \cdot C_1/2.
\end{array}
\end{equation}
 
Notice that $\mu_1(\Xi) + \mu_1(\Pi_{r, \epsilon, R}(go)) =
\mu_1(\Pi_r(go)) \ge C_1 \exp(-\g G d(o, go))$. So the inequalities
(\ref{LBND}) and (\ref{UBND}) yield $$\mu_1(\Pi_{r, \epsilon,
R}(go)) \ge (C_1/2)\cdot \exp(-\g G d(o, go)).$$ The proof is now complete.
\end{proof}



\section{Horoball growth functions}\label{Section4}
In this section, we prove the directions ``(2)$\Leftrightarrow$(4)" of Theorem \ref{mainthm}.

From now on, we assume that $\pX=\pG$ for simplicity so that  $G$ acts co-compactly on $X\setminus \mathcal U$. 
We fix a constant 
\begin{equation}\label{MDiameter}
M \ge \diam{(X \setminus
\mathcal U)/G},
\end{equation}
which satisfies simultaneously Lemmas \ref{contracting} and \ref{horoball}.

Recall that $H(o, n, \Delta)$ consists of the set of horoballs $U\in \mathbb U$ such that $$\Delta \le d(o, o_U)-n < \Delta,$$ where $o_U$ is a projection point, called  the \textit{foot} of $o$ to $U$. It is obvious that the equivalent type of $\sharp H(o, n, \Delta)$ does not depend on the choice of basepoints. Moreover, it is independent of the choice of horoball system $\mathbb U$ in definition of cusp-uniform actions. Indeed, two different horoball systems are mutually uniformly close by the co-compact action on their complements. By abuse of language, we can speak of   purely exponential  growth of horoball growth, if $\sharp H(o, n, \Delta) \asymp \exp(n \g G)$ for some $\Delta>0$.

\begin{lem}[``(4)$\Rightarrow$(2)'']\label{horoball=orbit}
Suppose that the horoball growth function is purely exponential. Then the orbit growth function of $G$ is purely exponential.  
\end{lem}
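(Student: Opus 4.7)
The plan is to prove the required asymptotic $\sharp A(o,n,\Delta') \asymp \exp(n \g G)$ by splitting it into its two halves. The upper bound $\sharp A(o,n,\Delta') \prec_{\Delta'} \exp(n\g G)$ is immediate from Coornaert's Lemma \ref{ballgrowthII} for any sufficiently large $\Delta'$, so the only content is to extract a matching lower bound from the hypothesis $\sharp H(o,n,\Delta) \succ \exp(n\g G)$.

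To that end, I will construct a finite-to-one map $\Phi : H(o,n,\Delta) \to A(o,n,\Delta+M)$. Given $U \in H(o,n,\Delta)$, let $o_U \in \partial U$ be a foot realizing $d(o,U)$. Since $\partial U \subset X \setminus \mathcal U$ and $G \act (X \setminus \mathcal U)$ is cocompact with diameter at most $M$ (cf.~(\ref{MDiameter})), choose $\Phi(U) \in G$ with $d(\Phi(U)o, o_U) \le M$. A triangle inequality then yields $|d(o, \Phi(U)o) - d(o, U)| \le M$, so $\Phi(U)$ does land in the desired annulus of width $\Delta+M$ around $n$.

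The one remaining point, which I expect to be the main obstacle, is the bound on the fibres of $\Phi$. If $\Phi(U) = \Phi(V) = g$, then the feet $o_U, o_V$ both lie in $B(go, M)$, so that $d(o_U, o_V) \le 2M$; by $G$-equivariance this reduces to bounding once and for all $N := \sharp\{U \in \mathbb U : o_U \in B(o, 2M)\}$. This finiteness is a standard consequence of properness of $G \act X$ together with $G$-finiteness of $\mathbb U$: each $G$-orbit of feet in $\mathbb U$ is locally finite in $X$ by properness of the $G$-action, and there are only finitely many such orbits to consider, so $N<\infty$.

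Granting $N < \infty$, one concludes
$$
\sharp A(o, n, \Delta+M) \;\ge\; N^{-1} \sharp H(o,n,\Delta) \;\succ\; \exp(n \g G),
$$
which combined with Lemma \ref{ballgrowthII} delivers purely exponential orbit growth with parameter $\Delta' := \Delta + M$.
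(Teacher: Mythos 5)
Your proof is correct and is essentially the paper's own argument: the paper also sends $U\in H(o,n,\Delta)$ to an element $t_U\in G$ with $d(t_Uo,o_U)\le M$ (via Lemma \ref{horoball}), lands in $A(o,n,\Delta+M)$ by the same triangle inequality, and bounds the fibres by the local finiteness of $\mathbb U$ (which, as you note, follows from properness of the action and $G$-finiteness of $\mathbb U$), with the upper bound supplied by Lemma \ref{ballgrowthII}.
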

\begin{proof}
For each $U\in H(o, n, \Delta)$, by Lemma \ref{horoball}, there exists $t_U \in G$ such that $d(o_U, t_U o)\le M$, and so $-\Delta+M\le d(o, t_Uo)-n < \Delta+M$. Thus, we defined a map from $H(o, n, \Delta) \to A(o, n, M+\Delta)$ by sending $U$ to $t_U$. It suffices to show that this map is uniformly finite-to-one. This follows from the fact that $\mathbb U$ is locally finite: a ball of finite radius intersects only finitely many horoballs. This implies that there are only finitely many $U \in H(o, n, \Delta)$ sending to the same $t_U$.  Hence, $$\sharp A(o, n, \Delta+M) \asymp \exp(n \g G),$$ completing the proof of the lemma.
\end{proof}

The set of $G$-orbits in $\mathbb U$ is finite. Let $$\mathbb{\tilde U}:=\{U_k \in \mathbb U: 1\le k\le m\}$$ be a choice of representatives in each $G$-orbit  among which $d(o, U_i)$ is minimal. 

By the choice of $M\ge \diam{(X\setminus \mathcal U)/G}$, we have that $X\setminus \mathcal U \subset N_M(Go)$. Since $d(o, U_i)$ is minimal, we have $d(o, U_i)\le  M$ for $1\le i\le m$. 

In the remainder of this section, we shall prove the direction ``(2)$\Rightarrow$(4)'' for  the horoball growth function of $G$ of \textit{type} $V\in \mathbb U$:
$$\sharp H_V(o, n, \Delta):= \sharp (H(o, n, \Delta)\cap G\cdot V),$$
for $\Delta>0$.  
We first establish the upper bound on $\sharp H_V(o, n, \Delta)$, which is a consequence of Lemma \ref{ballgrowthII}.
\begin{lem}\label{upperhgrowth}
There exists $\Delta>0$ such that
$\sharp H_V(o, n, \Delta) \prec   \exp(\g G n)$ for any $V\in \mathbb U$.
\end{lem}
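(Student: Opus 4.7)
The plan is to build a uniformly finite-to-one map from $H_V(o, n, \Delta)$ into an orbit annulus $A(o, n, \Delta')$ with $\Delta'$ slightly larger than $\Delta$, and then invoke the upper bound in Lemma~\ref{ballgrowthII}. For each $U \in H_V(o, n, \Delta)$, viewing $U$ as a member of the system $\mathbb Y$, Lemma~\ref{horoball} supplies an element $t_U \in G$ with $d(t_U o, o_U) \le M$, where $o_U$ is the foot of $o$ to $U$. Since $d(o, o_U) = d(o, U)$ lies in the specified annulus of radii near $n$, the triangle inequality places $t_U$ in $A(o, n, \Delta + M)$.

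Next I would show that the assignment $U \mapsto t_U$ has uniformly bounded fibres. The intermediate statement I would aim for is: there exists $N = N(M)$ such that for every $x \in X$,
\[
\sharp\{U \in \mathbb U : d(x, \partial U) \le M\} \le N.
\]
Given this, two distinct horoballs $U_1, U_2$ with $t_{U_1} = t_{U_2} = t$ both satisfy $d(to, \partial U_i) \le M$, so the fibre over $t$ has at most $N$ elements. Choosing $\Delta$ large enough that $\Delta + M$ exceeds the threshold $\Delta_0$ of Lemma~\ref{ballgrowthII}, I would then conclude
\[
\sharp H_V(o, n, \Delta) \le N \cdot \sharp A(o, n, \Delta + M) \prec \exp(n \g G),
\]
which is the desired bound.

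The main obstacle is establishing the uniform local finiteness claim above. Here my approach would be the following: first note that $d(x, \partial U) \le M$ forces $x \in N_M(X \setminus \mathcal U)$, on which $G$ acts cocompactly by the very definition of a cusp-uniform action, so after translating $x$ into a fixed compact set $K'$ it suffices to bound the number of horoballs meeting $N_M(K')$. Any such horoball has the form $g U_i$ for one of the finitely many representatives $U_i$ of $\mathbb U / G$; applying Lemma~\ref{horoball} to $U_i$ and using the cocompact action of $G_{U_i}$ on $\partial U_i$, I would produce $h \in G_{U_i}$ with $g h t_i o$ inside a fixed bounded neighborhood of $K'$. Properness of $G \act X$ bounds the number of such group elements, and each such element determines the coset $g G_{U_i}$, and hence the horoball $g U_i$ itself, uniquely. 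Summing over the finitely many representatives $U_i$ gives the desired $N$.
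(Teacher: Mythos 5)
Your proof is correct and takes essentially the same route as the paper: send each $U \in H_V(o,n,\Delta)$ to a nearby orbit point lying in $A(o,n,\Delta+M)$, note that this assignment is uniformly finite-to-one by local finiteness of $\mathbb U$, and invoke Lemma~\ref{ballgrowthII}. The only difference is that you spell out the local finiteness argument (via cocompactness on $X\setminus\mathcal U$, the finitely many $G$-orbits of horoballs, and properness of the action), which the paper simply cites as the known fact that a bounded set meets only finitely many horoballs.
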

\begin{proof}
Let $\Delta>0$ be given by Lemma \ref{ballgrowthII} so $\sharp A(o, n, \Delta+M)\prec   \exp(\g G n).$ For each $U \in H_V(o, n, \Delta)$, there exists  $g\in G$ such that $d(go, U)\le M$ and thus $$-M-\Delta \le d(o, go) -n < \Delta+M,$$
implying  $g\in A(o, n, \Delta+M).$  This sets up a map $$\Phi: H_V(o, n, \Delta) \to A(o, n, \Delta+M)$$ by sending $U$ to $g$.  This map is uniformly finite-to-one, by the fact that $\mathbb U$ is locally finite. Hence,  $\sharp H_V(o, n, \Delta)\prec \sharp A(o, n, \Delta+M)\prec   \exp(\g G n)$. The proves the lemma.
\end{proof}

\begin{prop}[``(2)$\Rightarrow$(4)'']\label{orbit=horoball}
Suppose that the orbit growth function is purely exponential. Then the horoball growth function  of any type $V\in \mathbb U$ is purely exponential.  
\end{prop}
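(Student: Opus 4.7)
The plan is to establish the matching lower bound $\sharp H_V(o, n, \Delta') \succ \exp(\g G n)$ for some $\Delta'>0$; together with Lemma \ref{upperhgrowth}, this yields the purely exponential growth of horoballs of type $V$.

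Set $P := G_V$. By Lemma \ref{horoball}, for each coset $gP$ one can pick a representative $t_U \in G$ with $d(t_U o, o_U) \le M$, where $U = gV$. Partitioning orbit points by their $P$-coset gives
\[
A(o, n, \Delta) \,=\, \bigsqcup_{U \in G\cdot V}\, \bigl(t_U P \cap A(o, n, \Delta)\bigr),
\]
which reorganises the count by the associated horoball of type $V$. The key geometric input is a horoball estimate: using the quasiconvexity of $V$ and the fact that every orbit point $po$ (for $p \in P$) lies at the fixed distance $d(o, V)$ from $V$, a direct computation with the log-vs-additive structure of hyperbolic distances in horoballs yields
\[
d(o, t_U p o) \,=\, d(o, U) \,+\, d(o, po) \,+\, O(1),
\]
where the $O(1)$ depends only on $\delta$ and $M$. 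Writing $m := d(o, U)$, this places the fiber $t_U P \cap A(o, n, \Delta)$ in bijection, up to bounded multiplicity, with $A_P(o, n-m, \Delta'')$ for some $\Delta'' = \Delta''(\Delta, \delta, M)$.

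Summing over $U$ and normalising by $\exp(-\g G n)$ yields the convolution identity
\[
1 \,\asymp\, \sum_{m=0}^{n+O(1)} a_m \, b_{n-m}, \quad a_m := \tfrac{\sharp H_V(o, m, \Delta')}{\exp(\g G m)},\; b_k := \tfrac{\sharp A_P(o, k, \Delta'')}{\exp(\g G k)},
\]
with $\sum_k b_k < \infty$ by the PCP property (Lemma \ref{convpara}) and $a_m \le C$ by Lemma \ref{upperhgrowth}. The final step converts this on-average bound into a pointwise one. Fix $K$ large enough that $\sum_{k > K} b_k$ is a small fraction of $\sum_k b_k$; then the tail contribution $\sum_{m \le n-K} a_m b_{n-m}$ is uniformly small, so most of the total must come from $m \in (n-K, n]$, forcing $\max_{m \in (n-K, n]} a_m \succ 1$ for every large $n$. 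Finally the inclusion $H_V(o, m, \Delta') \subseteq H_V(o, n, \Delta' + K)$ for $|m - n| \le K$ upgrades this to a uniform pointwise lower bound with a slightly enlarged window $\Delta' + K$.

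The main obstacle is this final step of converting convolutional information into a pointwise bound; it relies crucially on both $a_m \le C$ (Lemma \ref{upperhgrowth}) and the fast decay of $b_k$ supplied by the PCP property. The geometric heart of the proof is the additive estimate $d(o, t_U p o) = d(o, U) + d(o, po) + O(1)$, which is the place where the horoball structure of the cusp-uniform action is used explicitly.
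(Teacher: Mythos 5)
Your argument is correct, and it runs on the same engine as the paper's proof: stratify the elements of $A(o,n,\Delta)$ by their associated horoball of type $V$ and by the depth at which they sit along it, and then kill the deep stratum using the convergence of $\sum_{p\in P}\exp(-\g G\,d(o,po))$ (Lemma \ref{convpara}) together with the universal upper bounds of Lemmas \ref{upperhgrowth} and \ref{ballgrowthII}. The packaging differs enough to be worth recording. The paper assigns to $g\in A(o,n,\Delta)$ a horoball $U$ with $d(go,U)\le M$; because that assignment is geometric, it must first prove a uniqueness claim (its Claim 1, via bounded intersection) before decomposing the deep set $\mathbb B$ into layers $B_i$ indexed by $d(o,o_U)\approx i$ and estimating $\sharp B_i\prec \sharp A(o,i,\Delta+M)\cdot\sharp A_U(t_Uo,n-i,\Delta_1)$. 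You instead use the algebraic coset decomposition $G=\bigsqcup_{U\in G\cdot V}t_U G_V$, for which the horoball $U=gV$ attached to $g$ is canonical and automatically within bounded distance of $go$; this sidesteps Claim 1 entirely and turns the count into a clean convolution $\sum_m a_m b_{n-m}$. Your truncation step ($a_m\le C$, $\sum_k b_k<\infty$, cut the tail at $K$) is exactly the paper's choice of the large constant $R$ in its subLemma, and your window enlargement $H_V(o,m,\Delta')\subseteq H_V(o,n,\Delta'+K)$ replaces the paper's direct conclusion that the non-deep elements map into $H_V(o,n,\Delta')$. Two small repairs to your write-up: the additive estimate $d(o,t_Upo)=d(o,U)+d(o,po)+O(1)$ is not really a ``log-versus-additive'' computation inside the horoball --- the right justification is Lemma \ref{contracting}(3), which forces any geodesic from $o$ to a point within bounded distance of $U$ to pass within $M$ of the foot $o_U$, combined with the cocompact action of $G_V$ on $\partial V$ to identify $d(o_U,t_Upo)$ with $d(o,po)$ up to a bounded error; and Lemma \ref{horoball} alone does not guarantee that its $t_U$ lies in the coset $gG_V$, so the representative should be produced from that cocompactness as just described. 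With those references supplied, the proof is complete.
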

\begin{proof}
By Lemma \ref{upperhgrowth}, it suffices to give a lower bound on $\sharp H_V(o, n, \Delta)$. 
By the purely exponential orbit growth, there exist $C, \Delta>0$ such that 
\begin{equation}\label{EQgrowth}
\sharp A(o, n, \Delta+M)\ge  C  \exp(\g G n) 
\end{equation}
for $n\ge 1$.

We fix a horoball $V\in \mathbb U$, and without loss of generality, assume that $V=U_k\in \mathbb{\tilde U}$ for some $k$.

For each $g \in A(o, n, \Delta)$, there exists  $U\in G \cdot V$  such that $$d(go, U)\le M.$$ So, we consider a map $$\Psi: A(o, n, \Delta) \to G \cdot V\subset \mathbb U$$ by sending $g$ to a choice of $U$ so that $d(go, U)\le M$. By the local finiteness of $\mathbb U$, we see that $\Psi$ is uniformly finite to one: $$\sharp A(o, n, \Delta) \asymp \sharp \Psi(A(o, n, \Delta)).$$ So the idea of the proof is to find a ``large" portion of $A(o, n, \Delta)$ which is sent under $\Psi$ to $H_V(o, n, \Delta')$ for a constant $\Delta'>0$ given below.
To be precise, we will prove the following.
\begin{sublem}
There exists a constant $R>0$ such that the set $\mathbb B$ of elements $g \in A(o, n, \Delta)$ with $d(go, o_U)\ge R$ and $d(go, U)\le M$ satisfies
\begin{equation}\label{BCardEQ}
\sharp \mathbb B \le \displaystyle\frac{C}{2}\exp(n\g G).
\end{equation}
Here $U \in  \mathbb U$ is uniquely determined by $go$. 
\end{sublem}
  
We postpone the proof of the subLemma, and finish the proof of proposition  by assuming it. 

By the subLemma, for each $g \in A(o, n, \Delta+M)\setminus \mathbb B$, we have $d(go, o_U) \le R$ where $U$ is uniquely determined by $go$. Thus, $$-R-M-\Delta<d(o, o_U)-n< \Delta+M+R$$ yielding   $$\Psi(A(o, n, \Delta+M)\setminus \mathbb B) \subset H_V(o, n, \Delta'),$$
where $\Delta':=R+M+\Delta$.    

On the other hand, by (\ref{BCardEQ}) and (\ref{EQgrowth}), we obtain
$$
A(o, n, \Delta+M)\setminus \mathbb B \ge  \displaystyle\frac{C}{2}\exp(n\g G).
$$
Since $\Phi$ is uniformly finite-to-one, we see $$\sharp H_V(o, n, \Delta') \succ \sharp ( A(o, n, \Delta+M)\setminus \mathbb B)\succ \exp(\g G n),$$ proving the lower bound. The proposition is proved.
\end{proof}

Lets now prove the subLemma.

\begin{proof}[Proof of the subLemma]We organize the proof into three steps.

\textbf{Step 1}. We first show that  $U \in \mathbb U$ is uniquely determined by $go$: 
\begin{claim1}
Fix a constant $R_1>0$ as below in (\ref{R0EQ}).   For each $g  \in A(o, n, \Delta)$, there exists at most one horoball  $U \in \mathbb U$  such that $d(go, o_U)\ge R_1$ and $d(go, U)\le M$.
\end{claim1}
\begin{proof}[Proof of the Claim]
Let $x\in \partial U$ such that $d(x, go)\le M$. Assume by contradiction that there exists a distinct $U\ne W\in \mathbb U$ and $y\in \partial W$ such that $d(go, y)\le M$ and $d(go, o_W)\ge R_1$. By Lemma \ref{contracting}, there exists a point $x'\in [o, x]$ such that $d(o_U, x') \le M$.   Since $d(x, go) \le M$,  we obtain by Lemma \ref{thintri} that $$d(x', z_1)\le \delta$$ for some $z_1\in [o, go]$. Hence, 
\begin{equation} \label{ougo1EQ}
d([o, go], o_U)\le d(z_1, x')+d(x', o_U)\le M+\delta.
\end{equation} 
The same argument shows that $d(y', z_2)\le \delta$,  $d(o_W, y') \le M$ for $y'\in [o, y]$ and $z_2\in [o, go]$, and  $$d([o, go], o_W)\le d(z_2, y')+d(y', o_W)\le M+\delta.$$   See Figure \ref{Figure1}. 

For concreteness, assume that $d(o, z_1) \le d(o, z_2)$ and then $z_2\in [z_1, go]$.  Since $go, z_1 \in N_{M+\delta}(U)$, by the quasiconvexity of horoballs, there exists a constant $D=D(M+\delta)>0$ such that $[z_1, go] \subset N_D(U)$. Thus,  $d(z_2, U)\le D$.

Denote $R_0:=D+M+\delta$. We thus proved that $go, z_2\in N_{R_0}(U)\cap N_{R_0}(W)$ so $$
\begin{array}{ll}
\diam{N_{R_0}(U)  \cap N_{R_0}(W)}&\ge d(go, z_2)\ge d(go, o_V)-d(o_V, z_2)\\
&\ge R_1-M-\delta\\
&> \mathcal R(R_0),
\end{array}
$$
where we choose 
\begin{equation}\label{R0EQ}
R_1> \mathcal R(R_0)+(M+\delta),
\end{equation}
where $\mathcal R: \mathbb R_{\ge 0}\to \mathbb R_{\ge 0}$ is the bounded intersection function for the horoball system $\mathbb U$. This is a contradiction to the consequence $\diam{N_{R_0}(U)  \cap N_{R_0}(W)} \le \mathcal R(R_0)$ by $\mathcal R$-bounded intersection of $\mathbb U$. The claim is proved.
\end{proof}

\begin{figure}[htb]
\centering \scalebox{0.5}{
\includegraphics{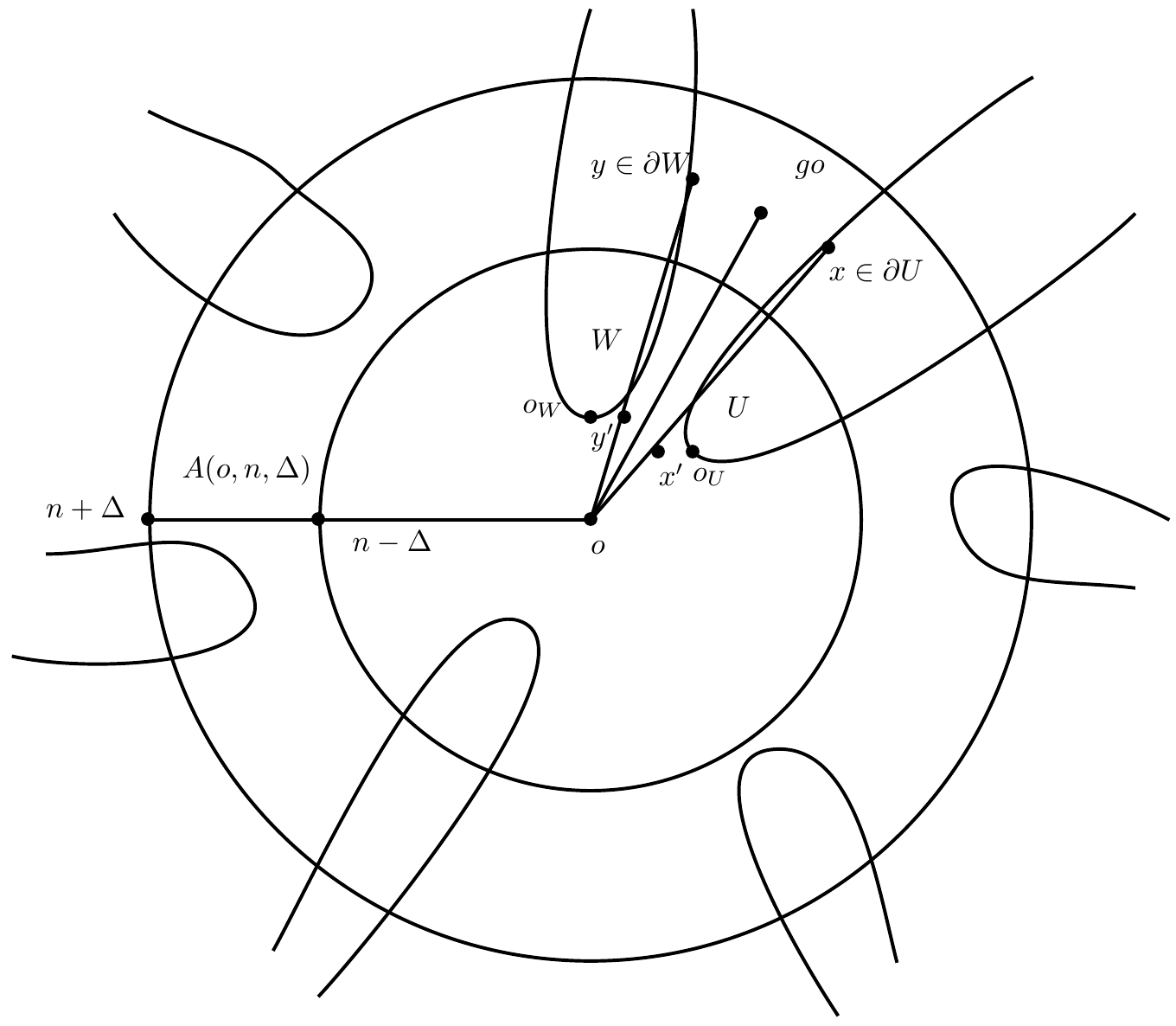} 
} \caption{Sublemma in \ref{orbit=horoball}} \label{Figure1}
\end{figure}

\textbf{Step 2}. By definition,  the set $\mathbb B$ consists of elements $g \in A(o, n, \Delta)$ with $d(go, o_U)\ge R$ and $d(go, U)\le M$, where the constant $R>0$ is determined below. We shall decompose $\mathbb B$ into layers $B_i(o, n, \Delta).$ We first determine the range, the upper bound, of indexes $i$.

 By (\ref{ougo1EQ}), we have 
\begin{equation} \label{ougoEQ}
\begin{array}{ll}
|d(o_U, go)-(d(o, go)-d(o, o_U))|&\le  2d(o_U, [o, go]) \\
&\le 2(M+\delta).
\end{array}
\end{equation}
It follows from  $g\in A(o, n, \Delta)$ that   
$$
\begin{array}{ll}
d(o, o_U)  &\le d(o, go)-d(o_U, go)+2(M+\delta)\\
&\le n -R +\Delta+2(M+\delta).
\end{array}
$$

We thus decompose the set $\mathbb B$ as the union of \textit{layers} $B_i(o, n, \Delta)$ for 
\begin{equation} \label{nilowbdEQ}
0\le i\le n-R+\Delta+2(M+\delta).
\end{equation}
Namely, we define $B_i(o, n, \Delta)$ to be  the set of $g \in \mathbb B$ such that
\begin{equation}\label{BiEQ}
-\Delta\le d(o, o_U) - i  < \Delta. 
\end{equation}

The following claim aims to bound $\sharp B_i(o, n, \Delta).$
\begin{claim2}
Set $\Delta_1:=2(\Delta+3M+\delta)$. For each $g \in B_i(o, n, \Delta) $, we have
$$
g\in   A(o, i, \Delta+M) \cdot A_U(t_Uo, n-i, \Delta_1) \cdot  N(o, 2M)
$$
where $U \in G\cdot V$ is uniquely determined by $go$ in the Claim 1, and $t_U\in G$ is given by Lemma \ref{horoball} such that $d(o_U, t_U o)\le M$.
\end{claim2}
\begin{proof}[Proof of the Claim]
By Lemma \ref{horoball} there exists $h\in G_U$ such that  $d(x, ht_Uo) \le M$. 
Observe that 
\begin{enumerate}
\item
$d(ht_Uo, go)\le d(ht_Uo, x)+ d(x, go)\le 2M$ so $(ht_U)^{-1}g  \in N(o, 2M)$,
\item
it follows by (\ref{BiEQ}) that  $t_U \in A(o, i, \Delta+M),$
\item  
since $|d(o, go)-n|\le \Delta$ for $g\in A(o, n, \Delta)$, we have 
$$
\begin{array}{ll}
|d(o, go)-d(o, o_U) -(n-i)| \le 2\Delta,
\end{array}
$$
by (\ref{BiEQ}).
By the triangle inequality, 
$$
\begin{array}{ll}
|d(h\cdot t_Uo, t_Uo)-d(o_U, go)|&\le d( t_Uo, o_U)+ d(go, ht_Uo)\\
& \le 3M.
\end{array}
$$
Together with (\ref{ougoEQ}), the above two inequalities imply: 
$$
\begin{array}{ll}
|d(h\cdot t_Uo, t_Uo)-(n-i)| &\le |d(h\cdot t_Uo, t_Uo)-d(o_U, go)|\\
&\;\;\;\;+ |d(o_U, go)-(d(o, go)-d(o, o_U))| \\
&\;\;\;\;+ |d(o, go)-d(o, o_U) -(n-i)| \\
&\le 2\Delta+5M+2\delta \le   \Delta_1,
\end{array}
$$
i.e.:
$h   \in A_U(t_Uo, n-i, \Delta_1).$ 
\end{enumerate}

In summary, each $g \in \mathbb B$ can be written as $h \cdot t_U\cdot (ht_U)^{-1} g$ for a unique $U\in \mathbb U$.  
\end{proof}

\textbf{Step 3}.  We calculate  $\sharp \mathbb B$ from the sum of $\sharp B_i(o, n, \Delta).$  
By Lemma \ref{biginclusion}, there exists $\Delta_2=\Delta_2(M, \Delta_1)>0$ such that for any $U\in G\cdot V$, we have 
$$
\sharp A_U(t_Uo, n, \Delta_1) \prec \sharp A_V(t_Vo, n, \Delta_2).$$ Since  $\sharp N(o, 2M)$ is uniform,  the above Claim 2  implies   
$$
\sharp B_i(o, n, \Delta) \prec \sharp A(o, i, \Delta+M) \cdot \sum_{V\in\mathbb{\tilde U}} \sharp A_{V}(t_{V} o, n-i, \Delta_2).
$$  
Hence, by (\ref{EQgrowth}), we have
$$
\sharp B_i(o, n, \Delta)\prec  \sum_{U\in\mathbb{\tilde U}}  \frac{\sharp A_V(t_V o, n-i,\Delta_2)}{\exp((n-i)\g G)} \cdot \exp(n\g G).$$
 
By Lemma \ref{convps2}, there exists a constant $R_2>R_1$ such that 
\begin{equation}\label{CSquire}
\sum_{V\in \mathbb{\tilde U}}\left(\sum_{j\ge R_2} \sharp A_V(o_V, j, \Delta_2)\cdot \exp(-\g G j)\right) \le \frac{1}{2C^2}.
\end{equation}

Now we choose a big constant $R>R_2+2(M+\delta)+\Delta$.
The (\ref{nilowbdEQ}) implies
$$
\begin{array}{ll}
n-i \ge R-2(M+\delta)-\Delta>R_2. 
\end{array}
$$
So we can use (\ref{CSquire})  to sum up $\sharp B_i(o, n, \Delta)$ over $i$:
$$
\sharp \mathbb B \prec   \exp(n\g G)\cdot \sum_{V\in\mathbb{\tilde U}} \left(\sum_{j \ge R_2} \frac{\sharp A_V(o_V, j, \Delta_2)}{\exp(j\g G)}\right) \le  \frac{\exp(n\g G)}{2C}.
$$
This is the inequality (\ref{BCardEQ}) what we wanted to prove, so the proof is complete. 
\end{proof}

\section{Shadow Covering Decomposition}\label{Section5}

This section prepares   necessary ingredients in the proof of Theorem \ref{mainthm} in Section \ref{Section6}.  The main structural result is the measure decomposition in Proposition \ref{ApproxShadowWithMeasure} following a shadow covering decomposition in Proposition \ref{ApproxShadow}.  The idea of proof is using the shadows of orbit vertices in the annulus $A(go, n, \Delta)$ to cover $\Pi_{r, \epsilon, R}(go)$. However, the existence of ``solid" horoballs causes a particular difficulty: the annulus around $go$ may not be uniformly spaced by the orbit $Go$. Hence, it is this place where we put effort into the analysis of the distribution of horoballs in cones (cf. Lemma \ref{LayerHoroballs}), and also where the DOP condition takes into action. 

We start by introducing the main technical definition and notations.

\subsection{Annular and horospherical shadows}


\begin{defn}(Horoballs in cones)\label{URGNdefn} 
Let $r, n \ge 0, g\in G$. Denote by $\mathbb U_{r, n}(go)$ the collection 
of horoballs $U\in \mathbb U$ such that $d(o, o_U)\le d(o, go) +n$ and $\gamma\cap U\ne \emptyset$ for a geodesic $\gamma=[o, \xi]$ for some $\xi \in \Pi_{r}(go)$.
\end{defn}

We first fix  some uniform constants throught out this section.
\begin{const}[\textbf{C, M}]\label{Const}
Let $C>0$ be quasiconvexity constant for all $U\in \mathbb U$, and $M \ge \diam{(X \setminus
\mathcal U)/G}$ given by Lemmas  \ref{contracting} and \ref{horoball}. 
\end{const}
 The following useful fact motivates next definitions. 

\begin{lem}\label{deepisdense}
There exist $L_0>0$ and $\tilde r=\tilde r (r)$ with the following property. Let $U\in \mathbb U_{r, n}(go)$. For any $h\in G_U$ such that $d(o_U, ho_U)>L_0$ there exists  a geodesic $\gamma=[o, \xi]$ for $\xi \in \Pi_{\tilde r}(go)$   such that $$d(o_U, z_-),\; d(ho_U, z_+) \le M$$  
where $z_-, z_+$ are the entry and exit points  of $\gamma$ respectively in $N_C(U)$.
\end{lem}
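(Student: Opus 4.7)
The plan is to take $\gamma := [o, h\zeta]$, where $\alpha = [o, \zeta]$ is the geodesic witnessing $U \in \mathbb U_{r,n}(go)$ in Definition \ref{URGNdefn}: $\zeta \in \Pi_r(go)$, $\alpha \cap U \neq \emptyset$, and I fix $y \in \alpha$ with $d(y, go) \le r$. Let $\alpha_-$ denote the entry point of $\alpha$ into $N_C(U)$. Applying Lemma \ref{contracting}(3) to $[o, \alpha_-]_\alpha$ (whose terminal endpoint is within $C$ of $U$), I get $d(o_U, \alpha_-) \le M$ after absorbing $C$ into the constant $M$ of Constants \ref{Const}.

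The entry bound $d(o_U, z_-) \le M$ for $\gamma$ follows by the same projection estimate applied to $[o, z_-]_\gamma$, once the existence of the entry point $z_-$ is established. For both the existence of entry/exit and the bound $d(ho_U, z_+) \le M$, my strategy is a fellow-traveling argument: the translate $h\alpha = [ho, h\zeta]$ also crosses $U$ (since $hU = U$ for $h \in G_U$), and its entry point $(h\alpha)_-$ satisfies $d(ho_U, (h\alpha)_-) \le M$ by the equivariant application of Lemma \ref{contracting}(3). Since $\gamma$ and $h\alpha$ share the endpoint $h\zeta$, the $\delta$-thin triangle on the vertices $(o, ho, h\zeta)$ forces $\gamma$ to fellow-travel $h\alpha$ on a final segment of length at least the Gromov product $(o, ho)_{h\zeta}$. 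Using the estimate $d(o, ho) \ge d(o_U, ho_U) - O(M)$ from horoball geometry (via the projection onto $U$), taking $L_0$ sufficiently large ensures this fellow-traveling segment covers the portion of $h\alpha$ around $(h\alpha)_-$, giving both the existence of $z_+$ and the bound $d(z_+, ho_U) \le M$.

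Finally, for $\xi \in \Pi_{\tilde r}(go)$, I would use that $\gamma$ and $\alpha$ start at the common point $o$ and both enter $N_C(U)$ within $M$ of $o_U$, so by $\delta$-hyperbolicity their initial segments $[o, z_-]_\gamma$ and $[o, \alpha_-]_\alpha$ fellow-travel. If $y \in [o, \alpha_-]_\alpha$, then $\gamma$ has a point within $\delta$ of $y$, so $d(go, \gamma) \le r + \delta$. The hard case will be when $y$ lies past $\alpha_-$, so $y \in N_C(U)$: the key observation is that with the standard choice $o \in X \setminus \mathcal U$ the orbit satisfies $Go \subset X \setminus \mathcal U$ by $G$-invariance of $\mathcal U$, forcing $go$ to stay outside all horoballs. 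Consequently $y$ must be shallow in $U$ at depth controlled by $r$, and since $\alpha$ enters $U$ nearly perpendicularly at the foot $\alpha_- \approx o_U$, standard horoball geometry bounds $d(y, \alpha_-) \le L(r)$ for a constant depending only on $r$. Combining with $d(o_U, z_-) \le M$ yields $d(go, z_-) \le r + L(r) + 2M =: \tilde r(r)$, giving $\xi \in \Pi_{\tilde r}(go)$. The main obstacle will be making this horoball-geometric estimate on shallow excursions quantitative in the abstract hyperbolic setting, where the geometry of horoballs is only controlled up to coarse equivalence rather than by explicit formulas as in Cartan--Hadamard manifolds.
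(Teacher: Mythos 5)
Your choice of $\gamma=[o,h\zeta]$ does not produce the required exit point, and the defect cannot be fixed by enlarging $L_0$. A geodesic from $o$ to $h\zeta$ that crosses $N_C(U)$ enters near $\proj_U(o)=o_U$, as you say, but it \emph{exits} near $\proj_U(h\zeta)=h\cdot \proj_U(\zeta)$, and $\proj_U(\zeta)$ is close to the exit point of $\alpha$ from $N_C(U)$ --- a point over which Definition \ref{URGNdefn} gives no control: $\alpha$ may penetrate arbitrarily deep into the horoball before leaving it. Concretely, in the upper half-plane with $U=\{y>1\}$, $o=(0,c)$ with $c<1$, $\zeta=T\in\mathbb R$ large and $h\colon z\mapsto z+D$, the geodesic $[o,h\zeta]$ is essentially the semicircle over $[0,T+D]$; it leaves $U$ near $(T+D,1)$, so $d(z_+,ho_U)=d((T+D,1),(D,1))\approx 2\log T$, which exceeds any fixed $M$ once $T$ is large, for every $D$. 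Your fellow-travelling step breaks precisely here: the sides $[h\zeta,ho]=h\alpha$ and $[h\zeta,o]=\gamma$ of the ideal triangle are only guaranteed $\delta$-close up to the internal point, which sits at distance roughly $(o,h\zeta)_{ho}\approx d(ho,\gamma)$ from $ho$; in the example this is about $d(o_U,\proj_U(\zeta))+2d(o,o_U)$, whereas $(h\alpha)_-$ sits at distance only $d(o,o_U)$ from $ho$, hence on the wrong side of the internal point (it is $2\delta$-close to $[o,ho]$, not to $\gamma$). The inequality $d(o,ho)\ge d(o_U,ho_U)-O(M)$ measures the wrong quantity and taking $L_0$ large does not move the internal point past $(h\alpha)_-$.

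The missing idea is that the endpoint $\xi$ must be manufactured so that the geodesic is forced to leave $U$ \emph{immediately} after reaching $ho_U$. The paper does this by choosing a geodesic ray $\beta$ with $\beta_-\in\partial U$, $d(\beta_-,ho_U)\le M$ and $\diam{\proj_U(\beta)}\le M$ (such rays exist by Lemma \ref{contracting}(1) plus cocompactness of $G_U$ on $\partial U$), and then verifying that the broken path $[o,o_U][o_U,\beta_-]\beta$ is an $L$-local quasigeodesic with $L=d(o_U,\beta_-)>L_0$; the local-to-global principle then makes it a global quasigeodesic, uniformly close to $[o,\beta_+]$, which pins the entry point near $o_U$ and the exit point near $\beta_-\approx ho_U$ simultaneously. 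This is where the hypothesis $d(o_U,ho_U)>L_0$ is actually used. Your last paragraph (handling the case where the point of $\alpha$ near $go$ lies inside $N_C(U)$) raises a real issue for the membership $\xi\in\Pi_{\tilde r}(go)$, but it is moot until the construction of $\gamma$ itself is repaired.
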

\begin{proof}
First of all, we observe that there exists a geodesic ray $\beta$ with $\beta_-\in \partial U$ such that   $\diam{\proj_U(\beta)}\le M$. Indeed, take an  arbitrary geodesic ray $\beta$ with an endpoint $\xi\in \pX\setminus \Lambda U$. This shows that $\beta$ eventually leaves every neighborhood of $U$. By Lemma \ref{contracting} (1),   a subray outside $N_C(U)$ projects to a set of diameter at most $M$ on $\partial U$.  Connecting a projection point $w'$ to $\xi$ gives a choice of $\beta$ we wanted.   Since $G_U$ acts on $\partial U$ co-compactly, assume for simplicity that the compact fundamental domain is of diameter of at most $M$. Up to a translation, there exists a geodesic ray $\beta$ with $\beta_-\in \partial U$ such that $d(\beta_-, ho_U)\le M$.  

It is easy exercise that the path $\alpha=[o, o_U][o_U, \beta_-]\beta$ is a $L$-local quasi-geodesc for $L:=d(o_U, \beta_-)$: every subpath of length $L$ is a quasi-geodesic with uniform constants depending on $M$ and the quasiconvexity constant $C$ of $U$. It is well-known that if $L\gg 0$, then $\alpha$ is a global quasi-geiodesic. (cf. \cite[Part III, Theorem 1.13]{BriHae}.) Thus,  the stability of quasi-geodesics implies that $\alpha$ stays within a $C$-neighborhood of $[o, \xi]$ (the same constant  $C$ as above to simplify the notations). By Lemma \ref{contracting} (3),   we show that $d(o_U, z_-) \le M$ and $d( {\proj_U(\beta)}, z_+) \le M$. So 
$$d(z_+, ho_U)\le d(z_+, {\proj_U(\beta)})+d(\beta_-, ho_U)\le 2M.$$ 

We now prove that $\xi\in \Pi_{\tilde r}(go)$ for some $\tilde r$. By definition of $U\in \mathbb U_{r, n}(go)$, there exists a geodesic $[o, \zeta]$ for $\zeta\in \Pi_r(go)$ such that $\alpha\cap U\ne \emptyset$. Let $w$ be the entry point of $[o, \zeta]$  in $U$ so we have $d(w, o_U)\le M$ as above. Since $d(w, z_-)\le 2M$, the thin-triangle property implies that $[o, z_-] \subset N_{r_0}([o, w])$ for $r_0=r_0(M)$. Thus, $d(go, \gamma)\le \tilde r=r+r_0$, completing the proof of $\xi\in \Pi_{\tilde r}(go)$.
\end{proof}

To simply the notations in the proof, we use the symbol $p\sim_D q$ to denote the inequality $d(p, q)\le D$, where $p, q\in X$ are two points.  Two constants $d_1 \simeq_D d_2$ if $|d_1-d_2|\le D$.

\begin{defn} (Deep parabolic elements on horospheres)
Let $U \in \mathbb U_{r, n}(go)$, and $L_0, \tilde r=\tilde r (r)$ be given by Lemma \ref{deepisdense}. We consider the set of geodesics $\gamma=[o, \xi]$ for $\xi \in \Pi_{\tilde r}(go)$ with the following property ($\star$):

\begin{itemize}
\item [($\star$)]
the 
point $z\in \gamma$ given by 
\begin{equation}\label{zdefn}
\begin{array}{ll}
d(o, z) -d(o, go) = n
\end{array}
\end{equation}
lies in $N_C(U)$ and 
\begin{equation}\label{zpmdefn}
d(z, \{z_-, z_+\})>L
\end{equation}
where $z_-, z_+$ are the corresponding entry and exit points of $\gamma$ in $N_C(U)$.
\end{itemize} 

For $L\ge L_0,$ the subset $G_{U, L}$     consists of elements $h\in G_U$:
\begin{equation}\label{htUzplus}
\begin{array}{lr}
d(o_U, z_-)\le M, & (z_-\sim_M o_U)\\
d(h\cdot o_U, z_+) \le  M. & (z_+\sim_M ho_U)
\end{array}
\end{equation}
which  exists  by Lemma \ref{deepisdense} for every geodesic $\gamma$ with property ($\star$).
\end{defn}

\begin{rem}
We caution the reader that $G_{U, L}$ depends on crucially on the horoball $U$ and $L>0$. In particular, this set $G_{U, L}$ is not invariant under $G$-tranlation so differs much from the set $G_{gU, L}$ of a horoball $gU$:  see Lemma \ref{GUSeries}.
\end{rem}

In further development, it is useful to mark a reference point $x$ on $\gamma$ for $go$:
\begin{equation}\label{PointX}
d(o, x) = d(o, go) 
\end{equation} 
 so by (\ref{zdefn}), we have $d(x, z)=n$.

For $\gamma=[o, \xi]$ for $\xi \in \Pi_{r}(go)$, we have $d(go, \gamma) \le r$ and so (\ref{PointX}) implies
\begin{equation}\label{GoX} 
\begin{array}{lr}
d(go, x) \le  2r.  & (go\sim_{2r} x)
\end{array}
\end{equation} 
For each $U\in \mathbb U$, there exists an element $t_U \in G$ given by Lemma \ref{horoball}   such that
\begin{equation}\label{oUtUo}
\begin{array}{lr}
d(t_Uo, o_U) \le  M. & (t_Uo\sim_{M} o_U)
\end{array}
\end{equation}

\subsection{Layering Horoballs in Cones}\label{LayerHoroballs}
This subsection introduces some auxiliary sets to   divide $\mathbb U_{r, n}(go)$ into a sequence of annulus sets by the distance $d(go, U)$. 

We first single out  an exceptional set where $o_U$ lies roughly ``below'' $go$.  Precisely, let $\mathbb X$ be the set of horoballs
$U \in \mathbb U_{r, n}(go)$ such that 
\begin{equation}\label{DefnX}
d(o, o_U)- d(o, go)  < r.
\end{equation}
This implies that $$d(go, o_U)\ge r$$ for $U \in \mathbb U_{r, n}(go)\setminus \mathbb X$.

Secondly, for $i, \Delta \ge 0$, consider the annulus set $$\mathbb U_{r}(go, i, \Delta)$$ of
$U\in \mathbb U_{r, n}(go) \setminus \mathbb X$ such that 
the following holds
\begin{equation}\label{anndef}
\begin{array}{ll}
-\Delta \le d(go, o_U) - i < \Delta. & (d(go, o_U)\simeq_{\Delta} i)
\end{array}
\end{equation}

The following corollary  clarifies the the relation $\mathbb U_{r}(o, i, \Delta)$ to the set of horoballs $H(o, i, \Delta)$. 
\begin{cor}\label{SameHoroballCounting}
 Let $g=1$. Then  $\mathbb U_{r}(o, i, \Delta)= H(o, i, \Delta)$ for $i\le  n-\Delta$.
\end{cor}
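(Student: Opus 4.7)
The corollary is essentially a definition chase, so my plan is to substitute $g = 1$ throughout and verify the two inclusions.

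First I note that for $g = 1$, one has $go = o$, so the annulus condition $-\Delta \le d(go, o_U) - i < \Delta$ in the definition of $\mathbb U_r(o, i, \Delta)$ becomes exactly $-\Delta \le d(o, o_U) - i < \Delta$, which is the defining condition of $H(o, i, \Delta)$. Consequently the inclusion $\mathbb U_r(o, i, \Delta) \subseteq H(o, i, \Delta)$ drops out immediately from the definitions, with no geometric content needed.

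For the reverse inclusion I fix $U \in H(o, i, \Delta)$ and must check the two remaining conditions, namely $U \in \mathbb U_{r, n}(o)$ and $U \notin \mathbb X$. The distance bound $d(o, o_U) \le n$ follows at once from the chain $d(o, o_U) < i + \Delta \le n$, where the last inequality uses the hypothesis $i \le n - \Delta$. For the geodesic condition, I take $\xi \in \pX$ to be the parabolic fixed point of $U$ (which exists since $U$ is a horoball in the cusp-uniform system $\mathbb U$); any geodesic ray $[o, \xi]$ enters $U$, and since $go = o$ trivially lies on $[o, \xi]$, the endpoint $\xi$ belongs to $\Pi_r(o)$ tautologically. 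Thus $U \in \mathbb U_{r, n}(o)$.

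The only genuinely delicate point is the exclusion of the exceptional set $\mathbb X$, which for $g = 1$ reduces to the horoballs with $d(o, o_U) < r$; here I will invoke the implicit regime of $i$, namely that in the range $i - \Delta \ge r$, the annulus condition $d(o, o_U) \ge i - \Delta$ already forces $U \notin \mathbb X$. Since $\mathbb X$ consists of horoballs whose foot lies in the bounded ball $B(o, r)$, by local finiteness of $\mathbb U$ the set $\mathbb X$ is in any case finite and contributes nothing to the asymptotic counts that this corollary will be fed into. I expect no real obstacle beyond bookkeeping with this finite exceptional set.
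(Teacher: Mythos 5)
Your proof is correct and follows essentially the same route as the paper: for $g=1$ the shadow $\Pi_r(o)$ is all of $\pX$, so membership in $\mathbb U_{r,n}(o)$ reduces to the condition $d(o,o_U)\le n$, and the annulus condition then coincides with the defining condition of $H(o,i,\Delta)$. You are in fact more careful than the paper's one-line proof on one point: the exceptional set $\mathbb X$ (for $g=1$, the horoballs with $d(o,o_U)<r$) is silently ignored there, and as you observe the literal equality can only fail for the finitely many small $i$ for which $\mathbb X$ meets the annulus --- a discrepancy that is harmless for the asymptotic counting (Lemma \ref{Uexpdecay}) into which this corollary is fed.
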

\begin{proof}
Note that $\Pi_r(o)=\pX$. It is clear that every horoball intersects some geodesic $[o, \xi]$ for $\xi\in \pX$. So, the set   $\mathbb U_{r, n}(o)$ is the same as the set $\mathbb U$ of all horoballs $U\in \mathbb U$ such that $d(o, o_U)\le n$. The conclusion thus holds by definition of $\mathbb U_{r}(o, i, \Delta)$. 
\end{proof}
Now, we make a second group of uniform constants as follows.

\begin{const}[\textbf{r, $\epsilon$, R, $\Delta$, $L_0$}]\label{Const2}
\begin{itemize}
\item
Let $r\ge 2M, \epsilon, R>0$ be given by the partial shadow lemma \ref{PShadowX}. 
\item
Let $\Delta>7r$ statisfy Lemma \ref{ballgrowthII} .
\item
Let $L_0$ given by Lemma \ref{deepisdense}.
\end{itemize}
\end{const}

\begin{lem}\label{UnionSets}
The following hold for any $L>0$:
\begin{equation}\label{UnionSetsEQ}
\displaystyle{\mathbb U_{r, n-L}(go) = \mathbb X \bigcup \left(\bigcup\limits_{i = 0}^{n-L+\Delta} \mathbb U_{r}(go, i, \Delta)\right),}
\end{equation}
where 
\begin{enumerate}
\item
$d(go, U) \le 5r$ for any $U \in \mathbb X$,
\item
$\sharp \mathbb U_{r}(go, i, \Delta) \prec \exp(i\g
G)$ for any $0\le i \le n-L+\Delta$.
\end{enumerate}
\end{lem}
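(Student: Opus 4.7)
The plan is to verify the set-theoretic decomposition (\ref{UnionSetsEQ}) first, then establish the two bounds (1) and (2) separately, exploiting the witnessing geodesic $\gamma = [o,\xi]$ for $\xi \in \Pi_r(go)$ with $\gamma \cap U \ne \emptyset$.

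For the decomposition, any $U \in \mathbb U_{r,n-L}(go) \setminus \mathbb X$ satisfies both $d(o,o_U) - d(o,go) \ge r$ (negating (\ref{DefnX})) and $d(o,o_U) \le d(o,go) + (n-L)$ (from membership in $\mathbb U_{r,n-L}(go)$). By Lemma \ref{contracting}(3) applied to a sub-geodesic $[o,w]$ with $w \in \gamma \cap U$, I place $o_U$ within $M$ of some $y \in \gamma$. Since $x, y$ both lie on $\gamma$ with $d(o,x) = d(o,go)$ by (\ref{PointX}) and $d(go,x) \le 2r$ by (\ref{GoX}), the identity $d(x,y) = |d(o,x) - d(o,y)|$ combined with $|d(o,y) - d(o,o_U)| \le M$ yields $d(go,o_U) \le 2r + 2M + (d(o,o_U) - d(o,go))$. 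Hence $d(go,o_U) \in [r, \, n-L + 2r + 2M]$, and since $\Delta > 7r \ge 2r + 2M$ by Constants \ref{Const2}, each such $U$ lies in some $\mathbb U_r(go, i, \Delta)$ with $0 \le i \le n-L+\Delta$.

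For claim (1), I argue by a case analysis on the position of $x' \in \gamma$ (with $d(x',go) \le r + \delta$, obtained from $\xi \in \Pi_r(go)$ together with $\delta$-hyperbolicity) relative to the entry and exit points $w^-,w^+$ of $\gamma$ in $N_C(U)$, where $C$ is the quasiconvexity constant for horoballs. If $x' \in [w^-,w^+]_\gamma$, then $x' \in N_C(U)$, so $d(go,U) \le r+\delta+C$. If $x'$ precedes $w^-$, Lemma \ref{contracting}(3) applied to $[o,w^-]$ (with $w^- \in U$) gives $d(o,w^-) \le d(o,o_U) + M$; combining with $d(o,o_U) < d(o,go) + r$ (from $U \in \mathbb X$) and $d(o,x') \ge d(o,go) - r - \delta$ yields $d(x',w^-) \le 2r + M + \delta$ and hence $d(go,U) \le 3r + M + 2\delta$. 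If $x'$ follows $w^+$, Lemma \ref{contracting}(1) applied to $[w^+,x']_\gamma$ (which stays outside $N_C(U)$) bounds the projection diameter by $M$, which forces $w^+$ close to $o_U$ by symmetry with the previous case, so $d(o,w^+) \le d(o,o_U) + O(1)$ and hence $d(x',w^+) \le 2r + O(1)$. Under the choices $r \ge 2M$ from Const. \ref{Const2}, each case gives $d(go, U) \le 5r$.

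For claim (2), I use the assignment $U \mapsto t_U$ from Lemma \ref{horoball} with $d(t_U o, o_U) \le M$. For $U \in \mathbb U_r(go, i, \Delta)$ this gives $d(go, t_U o) \in [i-\Delta-M, \, i+\Delta+M)$. The map is uniformly finite-to-one by local finiteness of $\mathbb U$, so $\sharp \mathbb U_r(go,i,\Delta)$ is bounded by the cardinality of $\{h \in G : d(go,ho) \in [i-\Delta-M, \, i+\Delta+M)\}$; by $G$-invariance of $d$, this equals $\sharp A(o, i, \Delta+M)$, which is $\prec \exp(i\g G)$ by Lemma \ref{ballgrowthII}.

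The main obstacle is case (c) of claim (1): when $x'$ lies beyond $w^+$, the definition of $\mathbb X$ controls only $d(o,o_U)$, not $d(o,w^+)$, and a priori $\gamma$ could travel deeply inside $U$ before exiting. The key is to exploit the contracting property applied to the post-$w^+$ portion of $\gamma$ to show that the exit point $w^+$ is close to $o_U$; once this is secured, the estimate parallels case (b).
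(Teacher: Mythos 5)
Your decomposition argument and claim (2) are fine; the decomposition is in fact more self-contained than the paper's (which leans on the entry point $z_-$ and the distinguished point $z$ from the property $(\star)$ set-up of the preceding subsection). The genuine gap is exactly where you locate it: case (c) of claim (1), and your proposed key does not close it. Lemma \ref{contracting}(1) applied to $[w^+,x']_\gamma$ only bounds $\diam{\proj_U([w^+,x']_\gamma)}$, i.e., it says the projection of $x'$ is close to the projection of $w^+$; it says nothing about $d(w^+,o_U)$. The foot $o_U$ sits near the \emph{entry} point $w^-$, and a geodesic can travel arbitrarily deep inside a horoball before exiting, so $d(w^-,w^+)$ --- hence $d(o_U,w^+)$ --- is unbounded. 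Concretely: take $U$ with $d(o,o_U)=10$, let $\gamma$ dive to depth $T\gg r$ and exit at $d(o,w^+)\approx 10+2T$, and place $go$ within $r$ of $\gamma$ at distance $10+2T+1000$ from $o$. Then $d(o,o_U)-d(o,go)<r$, so $U\in\mathbb X$ under the literal Definition \ref{URGNdefn}, yet $d(go,U)\approx 1000$. So claim (1) is actually false for the literal reading, and no projection argument can rescue case (c).

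What makes the lemma true --- and what the paper's proof silently uses --- is that the horoballs being decomposed are only those for which the witnessing geodesic satisfies property $(\star)$: the point $z\in\gamma$ with $d(o,z)=d(o,go)+n$ lies in $N_C(U)$, so the exit point $z_+$ has $d(o,z_+)\ge d(o,go)+n\gg d(o,x')$ and your case (c) simply never occurs. The paper's argument is then your cases (a)/(b): for $U\in\mathbb X$ the entry point $z_-$ lies within $2r$ of $[o,x]_\gamma$ (else $d(o,x)<d(o,o_U)-r$, contradicting (\ref{DefnX})), and quasiconvexity puts the whole of $[z_-,z]_\gamma$ --- which passes $x$ --- inside $N_M(U)$, giving $d(go,U)\le M+4r\le 5r$. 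To repair your proof you must import this extra hypothesis on $U$ (the exit point lies far beyond $go$) rather than attempt to control the exit point by contraction.
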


\begin{proof} We first prove the decomposition.  It suffices to prove $d(go, o_U)\le n-L+\Delta$ for $U \in \mathbb U_{r, n}(go) \setminus \mathbb X$.   Recall that $z_-$ is the entry point of $\gamma$ in $U$. We claim that $z_- \in [x, z]_\gamma$. 
Indeed, if not, assume $z_- \in [o, x]_\gamma$ and then $d(o, x) \ge d(o, z_-)$.  This implies
$$
\begin{array}{llr}
d(o, go) &=d(o, x) \ge d(o, z_-)& \\
&\ge d(o, o_U)-d(z_-, o_U)& \\
&\ge d(o,
o_U) - M.& ( \ref{htUzplus}: z_-\sim_M o_U)
\end{array}
$$  By (\ref{DefnX}), we have $d(o,
o_U) - d(o, go) \ge r>M$, giving a contradiction. 

As a consequence of the fact $z_- \in [x, z]_\gamma$, we then obtain  
$$d(x, z_-) \le d(x, z)-d(z_-, z)\le 
n-L$$ where $d(x,z)=n$ and $d(z_-, z)>L$ by (\ref{zpmdefn}). Thus, 
$$
\begin{array}{llr}
d(go, o_U) & \le d(x, z_-) +  d(z_-, o_U) +d(go, x)& \\
&\le d(x, z_-) +  M +2r&  [(\ref{htUzplus}:  z_-\sim_M o_U),\;(\ref{PointX}:  go\sim_{2r} x)]\\
&\le n+3M-L.&
\end{array}
$$
 So   the decomposition (\ref{UnionSetsEQ}) follows.

\textbf{(1)}: Let $U \in \mathbb X$ so that (\ref{DefnX}) holds. We see that $z_-\in N_{2r}([o,
x]_\gamma)$. Indeed, if not, then $z_- \in [x, z]_\gamma$ and $d(x, z_-)
>2r$. As $r > M$, it follows that 
$$
\begin{array}{llr}
d(o, x) &=d(o,z_-)-d(x,z_-)&  \\
&< d(o, o_U)+d(o_U, z_-) -2r&   \\
&< d(o, o_U) + M -2r& (\ref{htUzplus}: z_-\sim_M o_U) \\
&< d(o, o_U) - r,&
\end{array}
$$ giving a contradiction with (\ref{DefnX}). Thus, it is proved that $z_- \in N_{2r}([o,
x]_\gamma)$. 

As $U\in \mathbb U$ is
quasiconvex, let $M$ also satisfy that $[z_-, z]_\gamma \subset
N_M(U)$ for simplifying notations.  
As $z_- \in N_{2r}([o,
x]_\gamma)$, we obtain that
$d(x, U) \le M+2r$. So by (\ref{GoX}) we have $d(go, U) \le M +4r \le 5r$ for $U \in \mathbb X$. The statement \textbf{(1)} is proved.

\textbf{(2)}:   This follows from Lemma \ref{upperhgrowth}.
\end{proof}

The following result is a consequence of Lemma \ref{deepisdense}. 
 
\begin{lem}\label{GUSeries}
For $L>L_0$,  
$$
\mathcal A_U(o_U,   d(o_U, z)+2L, \Delta) \prec \sum_{h\in G_{U, L}} \exp(-d(o_U, ho_U)\g G) \prec
  \mathcal A_U(o_U, d(o_U, z)+L, \Delta). 
$$
\end{lem}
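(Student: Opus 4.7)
The plan is to reduce both bounds to tail estimates of the series $\mathcal S_U(o_U, o_U, \cdot)$ via Lemma \ref{ConvertSA}(1), absorbing the additive errors of order $O(M)$ into the gap between $L$ and $2L$ together with the margin provided by Constants \ref{Const2}, where $\Delta$ greatly exceeds $M$.

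First I would establish a two-sided comparison between $d(o_U, ho_U)$ and $d(o_U, z) + L$ for $h \in G_{U, L}$. Let $\gamma$ be a witness geodesic and $z_-, z_+$ the entry and exit points of $\gamma$ in $N_C(U)$. The collinearity of $z_-, z, z_+$ on $\gamma$ with (\ref{zpmdefn}) yields $d(z_-, z_+) = d(z_-, z) + d(z, z_+) > 2L$, while (\ref{htUzplus}) gives $|d(o_U, ho_U) - d(z_-, z_+)| \le 2M$ and $|d(z_-, z) - d(o_U, z)| \le M$. Combining these, $d(o_U, ho_U) \ge d(o_U, z) + L - 3M$. Summing over $G_{U, L}$,
$$\sum_{h \in G_{U, L}} \exp(-\g G d(o_U, ho_U)) \le \mathcal S_U(o_U, o_U, d(o_U, z) + L - 3M),$$
and since $\Delta > 3M$ by Constants \ref{Const2}, Lemma \ref{ConvertSA}(1) paired with the monotonicity of $\mathcal A_U$ in its starting index delivers the upper bound by $\mathcal A_U(o_U, d(o_U, z) + L, \Delta)$.

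For the lower bound I would invert the process. Lemma \ref{ConvertSA}(1) yields
$$\mathcal A_U(o_U, d(o_U, z) + 2L, \Delta) \asymp \mathcal S_U(o_U, o_U, d(o_U, z) + 2L - \Delta),$$
so it suffices to show that every $h \in G_U$ with $d(o_U, ho_U) \ge d(o_U, z) + 2L - \Delta$ lies in $G_{U, L}$. For such $h$, Lemma \ref{deepisdense} supplies a geodesic $\gamma = [o, \xi]$ with $\xi \in \Pi_{\tilde r}(go)$ satisfying (\ref{htUzplus}); the remaining condition $(\star)$ amounts to $z \in N_C(U)$ together with $d(z, z_\pm) > L$. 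Using the $M$-approximations $d(o, z_-) \approx d(o, o_U)$ and $d(o, z_+) \approx d(o, o_U) + d(o_U, ho_U)$, the inclusion $z \in N_C(U)$ is the intermediate-value check $d(o, z_-) \le d(o, z) \le d(o, z_+)$, valid from $U \in \mathbb U_{r, n}(go)$ together with the hypothesis on $d(o_U, ho_U)$. The inequality $d(z_-, z) > L$ reduces to $d(o_U, z) > L + M$, which holds in the regime $U \in \mathbb U_{r, n-L}(go)$ in which this lemma is applied (cf. Lemma \ref{UnionSets}); finally $d(z, z_+) > L$ reduces to $d(o_U, ho_U) - d(o_U, z) > L + M$, which is guaranteed by enlarging $L_0$ so that $L > L_0 > \Delta + M$.

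The main obstacle will be the careful coordination of the constants: $L_0$ must simultaneously be large enough for Lemma \ref{deepisdense} to apply throughout the range of $h$ being considered, large enough that $L > \Delta + M$ for the lower bound to close, and large enough that the additive $O(M)$ slack in the geometric approximations can be absorbed into the gap between $L$ and $2L$. Once these conditions on $L_0$ are arranged, both directions follow from the monotonicity of the tail series $\mathcal A_U$ and the equivalence $\mathcal A_U \asymp \mathcal S_U$ from Lemma \ref{ConvertSA}(1).
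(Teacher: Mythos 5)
Your proof follows essentially the same route as the paper: the upper bound via the collinearity $d(z_-,z_+)=d(z_-,z)+d(z,z_+)$ and the $O(M)$-comparisons from (\ref{htUzplus}) to get $d(o_U,ho_U)\ge d(o_U,z)+L-O(M)$, and the lower bound by using Lemma \ref{deepisdense} to show that every $h\in G_U$ with $d(o_U,ho_U)$ above a threshold near $d(o_U,z)+2L$ lies in $G_{U,L}$, with Lemma \ref{ConvertSA}(1) converting between $\mathcal S_U$ and $\mathcal A_U$ in both directions. The only differences are bookkeeping choices of where the additive constants land, so this is correct and matches the paper's argument.
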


\begin{proof}
Recall that the proof of Lemma \ref{UnionSets} (1) shows that $z_- \in [x, z]_\gamma$, and $d(z_-, z_+) = d(z_-, z)+d(z, z_+)$.   By $(\ref{htUzplus}: o_U\sim_M z_-)$ and $(\ref{htUzplus}: ho_U\sim_M z_+)$, we obtain 
$$
\begin{array}{rl}
d(o_U, h\cdot o_U) \simeq_{3M}  d(z, z_+) +d(o_U, z). 
\end{array}
$$
Since $d(z, z_+) > L$ by (\ref{zpmdefn}) and $r>M$, we obtain
$$
\begin{array}{rl}
d(o_U, h\cdot o_U)&\ge d(o_U, z) +d(z, z_+) -3M \\
&\ge    d(o_U, z)+ L -3r.
\end{array}
$$   
This shows the upper bound:
$$
\begin{array}{rl}
\sum_{h\in G_{U, L}} \exp(-d(o_U, ho_U)\g G) &\le \mathcal S_U(o_U, o_U, d(o_U, z)+L-3r)\\
&\le \mathcal A_U(o_U, d(o_U, z)+L, 3r).
\end{array}
$$

For the lower bound, let $h\in G_U$ such that $d(o_U, h\cdot o_U)\ge    d(o_U, z)+ 2L+3r$. Since $L>L_0$ and $d(o_U, h\cdot o_U)>L_0$, By Lemma \ref{deepisdense}, there exists a geodesic $\gamma=[o, \xi]$ for $\xi \in \Pi_{\tilde r}(go)$ such that $d(z_-, o_U)\le M$ and $d(z_+, ho_U)\le M$. 

Noting that $d(o_U, h\cdot o_U)>2L+3r$, we have $d(z_-, z_+)\ge d(o_U, h\cdot o_U)-2M>2L$ and there exists $z \in \gamma$ given by (\ref{zdefn}) such that $d(z, \{z_-, z_+\})>L$. This proved $h \in G_{U, L}$, so gives the lower bound:
$$
\begin{array}{rl}
\sum_{h\in G_{U, L}} \exp(-d(o_U, ho_U)\g G) &\ge \mathcal S_U(o_U, o_U, d(o_U, z)+2L+3r)\\
&\succ \mathcal A_U(o_U, d(o_U, z)+2L, 3r),
\end{array}
$$
where the last inequality follows from Lemma \ref{ConvertSA} (1). 
\end{proof}

\subsection{Shadow Covering Decomposition}
Recall that $\Pi^c_{r, \epsilon, R}(go)$ denotes the set of conical
limit points in $\Pi_{r, \epsilon, R}(go)$, and the $\Pi_{r, \epsilon, R}(go)$ is defined relative to a system $\mathbb Y$ of quasiconvex subsets in Convention \ref{ConvContracting}.

\begin{prop}\label{ApproxShadow}
There exist $\tilde R=\tilde R(\epsilon, R), \tilde r=\tilde r(r), \Delta >0$ with the following property.

For any $L>L_0$, there exists $\tilde\Delta=\tilde\Delta(L) >0$ such that the set $\Pi^c_{r, \epsilon, R}(go)$ is covered by  the following three collections of shadows:
\begin{enumerate} 
\item
the \textbf{annular} shadows: 
$$
\Pi_1:=\{ \Pi_{r}(ho): h \in
\Omega_{\tilde r, \epsilon, \tilde R}(go, n, \tilde\Delta)\},  
$$
\item
the \textbf{horospherical} shadows:
$$
\Pi_2:=\bigcup_{0\le i\le n-L+\Delta} \left(\bigcup_{U \in \mathbb U_{r}(go, i, \Delta)} \mathbf {Sha}_{L}(U)\right)\; 
$$
\item
the exceptional \textbf{horospherical} shadows:
$$
\Pi_3:=\bigcup_{U \in \mathbb X} \mathbf {Sha}_{L}(U). 
$$
where $\mathbf {Sha}_L(U):=  \{\Pi_r(h\cdot o_U): h \in G_{U, L}\}$.
\end{enumerate}
whose union $\Pi_1\cup \Pi_2\cup \Pi_3$ is, in turn, contained in $\Pi_{2\tilde r}(go)$ for any $n \gg 0$.  
\end{prop}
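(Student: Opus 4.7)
The proof proceeds by dichotomizing each conical $\xi \in \Pi^c_{r, \epsilon, R}(go)$ according to how its partial-shadow realizing geodesic $\gamma = [o, \xi]$ behaves at the reference point $z \in \gamma$ defined by $d(o, z) = d(o, go) + n$. Fix uniform constants: $\tilde r = \tilde r(r)$ from Lemma \ref{deepisdense}, $\Delta$ from Constants \ref{Const2}, and $\tilde R$ large enough that $\tilde R > L_0$, $\tilde R > \mathcal R(C)$, and $2\tilde R \ge 2R + D(\epsilon, \tilde R)$ with $D$ from Lemma \ref{PairTransX}; set $\tilde\Delta = \tilde\Delta(L)$ at the end. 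By the partial-shadow property, $\gamma$ passes within $r$ of $go$ and contains an $(\epsilon, R)$-transition point $v \in B(go, 2R) \cap \gamma$.

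\emph{Case 1 (horospherical).} Suppose $\gamma \cap B(z, L) \subset N_C(U)$ for some $U \in \mathbb U$, unique by bounded intersection at scale $L > \mathcal R(C)$. Since $\xi$ is conical and hence $\xi \notin \Lambda U$, $\gamma$ enters $N_C(U)$ at $z_-$ and exits at $z_+$, with $d(z, z_\pm) \ge L > L_0$. Contracting projection (Lemma \ref{contracting}) places $o_U \sim_M z_-$, and cocompactness of $G_U$ on $\partial U$ (Lemma \ref{horoball}) selects $h \in G_U$ with $ho_U \sim_M z_+$; then $h \in G_{U, L}$ by definition, and $\xi \in \Pi_r(ho_U)$ since $d(z_+, ho_U) \le M \le r$. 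Estimating $d(o, o_U) \le d(o, z_-) + M \le d(o, go) + n - L + M$, we obtain $U \in \mathbb U_{r, n-L+M}(go)$; applying Lemma \ref{UnionSets} (with a shifted $L$) decomposes $U$ as either $U \in \mathbb X$, contributing to $\Pi_3$, or $U \in \mathbb U_r(go, i, \Delta)$ for some admissible $i$, contributing to $\Pi_2$.

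\emph{Case 2 (annular).} Otherwise, some point $w \in [z - L, z + L]_\gamma$ lies outside every $N_C(U)$, i.e., in the thick part. Cocompactness yields $h \in G$ with $d(ho, w) \le M$, so $|d(o, ho) - d(o, go) - n| \le L + M$; set $\tilde\Delta := L + M + 1$. The cone condition $[o, ho] \cap B(go, \tilde r) \ne \emptyset$ follows by thin-triangle fellow-traveling of $[o, ho]$ with $[o, z]_\gamma$, which passes within $r$ of $go$ at the mark $x$ of (\ref{PointX}). For the transition condition, $d(v, z) \ge n - 2R$ exceeds the threshold $L(\epsilon, \tilde R, L + M)$ of Lemma \ref{PairTransX} for $n \gg 0$, so that lemma applied to $\alpha = [o, z]$ and $\gamma' = [o, ho]$ promotes $v$ to an $(\epsilon, \tilde R)$-transition point $v' \in [o, ho]$ with $d(v, v') \le D(\epsilon, \tilde R)$, hence $d(v', go) \le 2R + D \le 2\tilde R$. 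This gives $h \in \Omega_{\tilde r, \epsilon, \tilde R}(go, n, \tilde\Delta)$ and $\xi \in \Pi_r(ho) \in \Pi_1$. The containment $\Pi_1 \cup \Pi_2 \cup \Pi_3 \subset \Pi_{2\tilde r}(go)$ then follows by fellow-traveling, since in each case the shadow's center lies on a geodesic from $o$ already passing within $\tilde r$ of $go$.

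The principal obstacle is coordinating the constants so that $\tilde r, \tilde R, \Delta$ remain independent of $L$ while $\tilde\Delta$ absorbs the $L$-dependence, and simultaneously guaranteeing that the input gap $d(v, \alpha_+) > L(\epsilon, \tilde R, L + M)$ of Lemma \ref{PairTransX} in Case 2 is supplied by the asymptotic assumption $n \gg 0$ in the statement. A secondary subtlety is verifying in Case 1 that the horoball $U$ actually satisfies $\gamma \cap U \ne \emptyset$ as required by the definition of $\mathbb U_{r, n}(go)$, not merely $\gamma \cap N_C(U) \ne \emptyset$; this holds once $L$ is large compared to $C$ since a sufficiently long segment in $N_C(U)$ must dip into $U$.
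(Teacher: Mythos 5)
Your proof is correct in substance and follows essentially the same route as the paper: both hinge on the marked point $z\in\gamma$ with $d(o,z)=d(o,go)+n$ and split according to whether $z$ sits deep inside a single horoball neighborhood (horospherical shadows, via Lemmas \ref{contracting}, \ref{horoball} and the definition of $G_{U,L}$) or has an orbit point within $L+O(M)$ (annular shadows, via thin triangles and Lemma \ref{PairTransX}); the paper merely phrases the dichotomy as ``$B(z,M)\cap Go\neq\emptyset$ or not'' and then subdivides the horoball case by depth. One step as written is a non sequitur: the negation of ``$\gamma\cap B(z,L)\subset N_C(U)$ for some $U$'' does not by itself produce a point $w$ of $[z-L,z+L]_\gamma$ lying outside \emph{every} $N_C(U)$ --- a priori each point could lie in some $N_C(U)$ without a single $U$ absorbing the whole segment. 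The conclusion you need is still available with a two-line patch: either $z$ lies in the thick part $X\setminus\mathcal U\subset N_M(Go)$, or $z\in U$ for some $U$, in which case (by quasiconvexity of $U$) one of the entry/exit points $z_\pm$ of $\gamma$ in $N_C(U)$ satisfies $d(z,z_\pm)\le L$ and is within $C+2M$ of $Go$ by Lemma \ref{horoball} --- this is exactly the paper's Subcase 1. Finally, $D$ in Lemma \ref{PairTransX} depends only on $(\epsilon,R)$, so the circular-looking requirement $2\tilde R\ge 2R+D(\epsilon,\tilde R)$ should simply read $\tilde R:=2R+D(\epsilon,R)$.
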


\begin{proof}
\textbf{(1).} Set $\tilde\Delta = L + 2M$. We prove first that $\Pi^c_{r, \epsilon, R}(go)$ is covered by $\Pi_1\cup \Pi_2\cup\Pi_3$. For any $\xi \in \Pi^c_{r, \epsilon, R}(go)$, there exists a
geodesic $\gamma=[o,\xi]$ such that $\gamma$ contains an $(\epsilon,
R)$-transition point $v$ in $B(go, 2R)$.  We have two cases to consider as follows.

\textbf{Case 1.}  There exists $h \in G$ such that $d(ho, z) \le M$. Since it is assumed that $M \le r$, it follows that 
\begin{equation}\label{E1}
\xi \in   \Pi_r(ho).
\end{equation}
In this case, it remains to prove that 
$h \in
\Omega_{\tilde r, \epsilon, \tilde R}(go, n, \tilde\Delta)$ for constants $\tilde r, \tilde R$ to be determined below.

Let $L_1=L_1(\epsilon, R, M), D=D(\epsilon, R)$ be given by Lemma
\ref{PairTransX}. Choose $$n>\max\{L_1+2R, M+r\}.$$

We first prove that $d(go, [o, ho]) \le \tilde r$ for $\tilde r:=r+\delta>0$. Indeed,  since $\gamma\cap B(go, r)\ne \emptyset$ for $\xi\in \Pi_r(go)$, there exists $\bar z\in \gamma$ such that $d(\bar z, go)\le r$. Noting that $d(o, z) -d(o, go) =n$ in (\ref{zdefn}) and $n\gg  2r$, we see $\bar z\in [o, z]_\gamma$ and $d(\bar z, z)>r$.   Note that two geodesics $[o, z]_\gamma$ and $[o, ho]$ have two endpoints with a distance at most $r>M\ge d(z, ho)$.  By Lemma \ref{thintri}, we see that $d(\bar z, [o, ho])\le \delta$.  Thus 
\begin{equation}\label{E1prime}
d(go, [o, ho])\le d(go, \bar z)+d(\bar z, [o, ho])\le \tilde r.
\end{equation}
 

Since $d(z, ho)\le M$ and $d(o, z)-d(o, go)=n$ (\ref{zdefn}), we deduce   that $$|d(ho, go)-n|\le M \le \tilde \Delta,$$   which  proves that $h\in \Omega_{\tilde r}(go, n,\tilde \Delta)$.
 
We now prove that $[o, ho]$ contains an $(\epsilon, R)$-transition point in $B(go, 2R+D)$. Recall that $\gamma$ contains a transition point $v$ in $B(go, R)$ so $d(v, go)\le R$. By triangle inequality, we have $d(z, go)+d(o, go)+2d(v, go)\ge d(o, z)$.  Hence, we obtain 
$$
\begin{array}{llr}
d(z, v)&\ge d(z, go)-d(go, v) &\\
&\ge d(o, z)-d(o, go)-3d(go, v)&\\
& \ge n-3R>L_1. & \;\;[(\ref{zdefn}):\;d(v, go)\le R] 
\end{array}
$$
  By Lemma \ref{PairTransX},
there exists an $(\epsilon, R)$-transition point $w$ in $[o, ho]$
such that $d(v, w) < D$ so $d(w, go) < 2R+D$. Denoting $\tilde R=2R+D$, we have proved 
\begin{equation}\label{E2}
h \in \Omega_{\tilde r, \epsilon, \tilde R}(go, n, \tilde\Delta).
\end{equation}
In this case, we proved that
$\xi\in \Pi^c_{r, \epsilon, R}(go)$ lies in a shadow of annular type in $\Pi_1$.

\begin{figure}[htb]
\centering \scalebox{0.5}{
\includegraphics{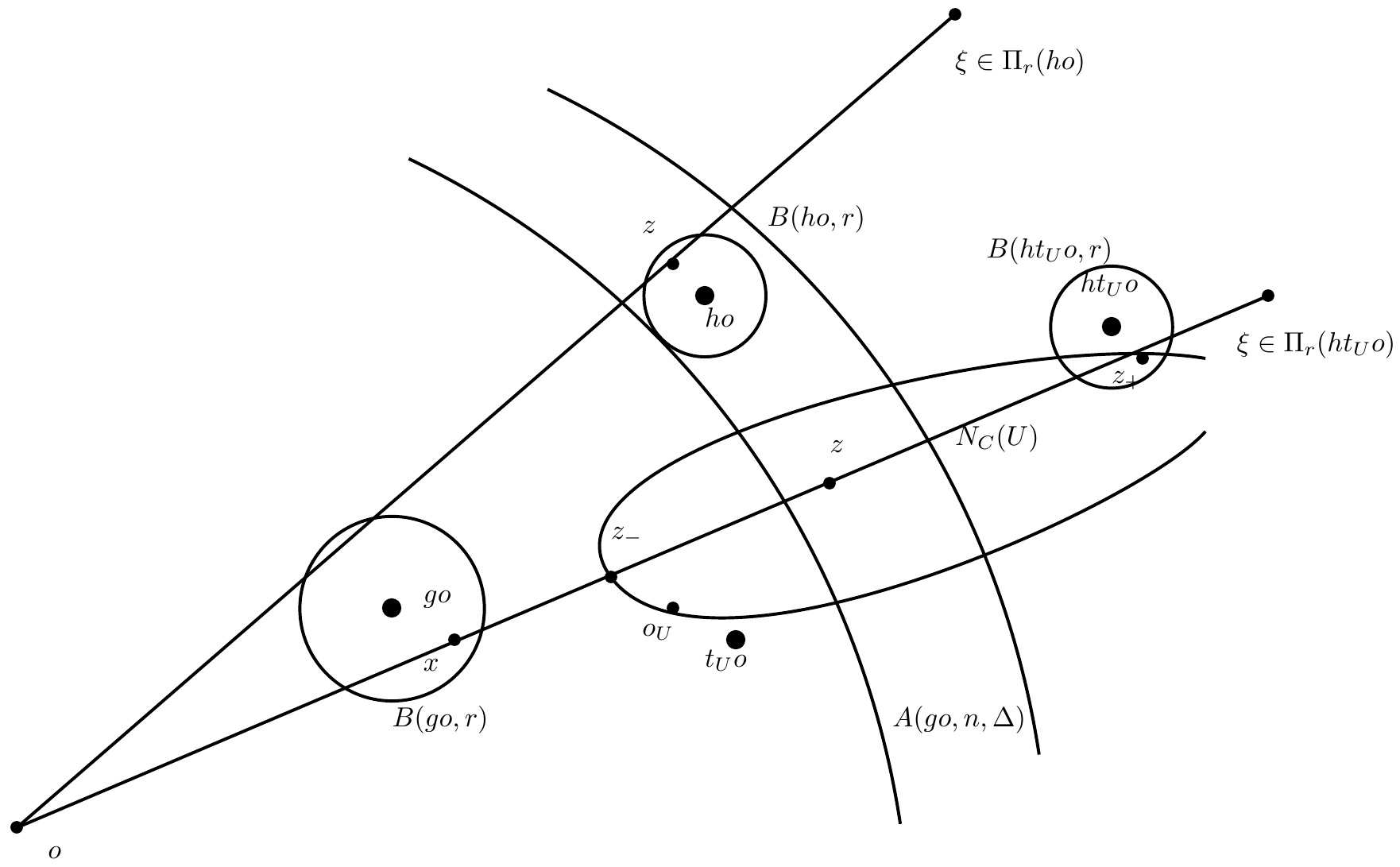} 
} \caption{Proposition \ref{ApproxShadow}} \label{Figure2}
\end{figure}
\textbf{Case 2}. The ball $B(z, M)$ contains no point in $Go$. Since $M \ge \diam{(X \setminus
\mathcal U)/G}$, there exists a horoball $U \in \mathbb
U$ contains the point $z$. Let $z_-, z_+$ be the corresponding entry and exit points of $\gamma$ in $N_C(U)$. Note that $\partial U \subset N_M(G_U\cdot o_U)$ so there exists $h\in G_U$ such that 
\begin{equation}\label{zpmEQ}
d(z_-, o_U),\; d(z_+,
ho_U) \le M,
\end{equation} 
where the first inequality follows by Lemma \ref{contracting}.

To finish the discussion of \textbf{Case 2}, we continue to examine two subcases  
 as follows.  

\textit{Subcase 1}: $\min\{d(z, z_-), d(z, z_+)\}\le L$. Thus,  by (\ref{oUtUo}) and (\ref{zpmEQ}), we have $$d(z, \{t_Uo, ht_Uo\}) \le L+2M\le \tilde \Delta.$$ For definiteness, consider $d(z,   ht_Uo) \le \tilde \Delta$, so two geodesics $[o, z]$ and $[o, ht_Uo]$ with two endpoints at most $\tilde \Delta$. For $n\gg 2\tilde \Delta$, we argue by $\delta$-thin triangle property as   proving  (\ref{E1prime}) to show that $d(go, [o, ht_Uo])   \le  \tilde r$. Thus, $$ht_U\in \Omega_{\tilde r}(go, n, \Delta).$$   

Moreover, repeating the argument in the \textbf{Case 1}, we see that there exists an $(\epsilon,
R)$-transition point $w$ in $[o, h\cdot t_Uo]$ such that $d(go, w) <
2R+D$. In a word, we proved that
\begin{equation}\label{E4}
ht_U \in \Omega_{\tilde r, \epsilon, \tilde R}(go, n, \tilde\Delta).
\end{equation}

Hence, $\xi\in \Pi^c_{r, \epsilon, R}(go)$   goes into a shadow of annular type in $\Pi_1$ as well.

\textit{Subcase 2}: $\min\{d(z, z_-), d(z, z_+)\}> L$. As $\xi$ is a conical point, any geodesic $[o, \xi]$ leaves every
horoball into which it enters. This subscase is not vacuous. 

Since $d(z_-, z)>L$ and $d(x, z)=n$ by (\ref{zdefn}), we have $d(z_-, x)<n-L$.  By (\ref{PointX}), we have  $d(o, x)=d(o, go)$. It follows that   $$
\begin{array}{llr}
d(o, o_U)-d(o, go)&\le d(o, z_-)+d(z_-, o_U) -d(o, x)&\\
& \le d(o, z_-) -d(o, x)+M&  \;\; [(\ref{zpmEQ}): z_-\sim_M o_U] \\
&\le d(z_-, x)+M\le n-L+M.&
\end{array}$$
By definition, this shows that $U\in \mathbb U_{r, n-L+\Delta}$. By (\ref{zpmEQ}), the element $h$ satisfies $d(h o_U, z_+) \le M$ and $d(z_-, z_+)>L$ so by definition, $h\in G_{U, L}$.

Note that $$d(ht_Uo, z_+)\le d(ht_Uo, ho_U)+d(ht_Uo, z_+)\le 2M\le r$$ by (\ref{zpmEQ}, \ref{oUtUo}), yielding
\begin{equation}\label{E3}
\xi \in  \Pi_r(h\cdot t_Uo) 
\end{equation}
thus completing the proof that $\xi\in \Pi^c_{r, \epsilon, R}(go)$ lies in a shadow of horospherical type in $\Pi_2\cup \Pi_3$. 

Therefore, we proved that $\Pi^c_{r, \epsilon, R}(go)$ is contained in the union of collections  $\Pi_1 \cup \Pi_2\cup \Pi_3$ of shadows.  

\textbf{(2).} We now show that the union of all shadows from $\Pi_1\cup \Pi_2\cup \Pi_3$ is contained in $\Pi_{2\tilde r}(go)$. For any $\Pi_{r}(ho) \in \Pi_1$, we prove that $\Pi_{r}(ho)\subset \Pi_{2\tilde r}(go)$. The cases for $\Pi_2\cup \Pi_3$ are similar. 

Since $ho \in \Omega_{\tilde r, \epsilon, \tilde R}(go, n, \tilde\Delta)$, we have $d(go, [o, ho])\le \tilde r$. Let $\xi \in \Pi_{r}(ho)$ so we have $d(ho, [o, \xi])\le r$. By the $\delta$-thin triangle property, we see that $d(go, [o, \xi]\le \tilde  r$ by the same argument for (\ref{E1prime}).  Hence,   $\Pi_{r}(ho) \subset  \Pi_{2\tilde r}(go)$.  This concludes the proof of the proposition.
\end{proof}

\subsection{Measure decomposition}

Following Proposition \ref{ApproxShadow}, the goal of this section is to prove the following.
\begin{prop}[Measure decomposition]\label{ApproxShadowWithMeasure}
Under the same assumption as Proposition \ref{ApproxShadow}, we have
\begin{equation}\label{F1}
\begin{array}{ll}
\exp( -\g G  d(o, go))
\asymp  
 \mathbf {ASha}(g, n,\tilde \Delta) + \mathbf {HSha}(g, n, L)+\mathbf {ESha}(g, n, L),
\end{array}
\end{equation}
where 
$$
\mathbf {ASha}(g, n,\tilde \Delta):=\sum\limits_{\Pi \in \Pi_1} \mu_1(\Pi), \;
\mathbf {HSha}(g, n, L):=\sum\limits_{\Pi \in \Pi_2} \mu_1(\Pi), \;
\mathbf {ESha}(g, n, L):=\sum\limits_{\Pi \in \Pi_3} \mu_1(\Pi).
$$  
\end{prop}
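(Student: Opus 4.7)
The plan is to establish the asymptotic equality (\ref{F1}) as two one-sided bounds, combining the covering from Proposition \ref{ApproxShadow} with the Shadow Lemma \ref{OmbreLem}, the Partial Shadow Lemma \ref{PShadowX}, and the no-atom property of PS-measures at parabolic points.

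For the lower bound $\exp(-\g G d(o, go)) \prec \mathbf{ASha} + \mathbf{HSha} + \mathbf{ESha}$, I will use that Proposition \ref{ApproxShadow} covers $\Pi^c_{r,\epsilon,R}(go)$ by the shadows in $\Pi_1 \cup \Pi_2 \cup \Pi_3$. Since $G$ acts geometrically finitely on $\Lambda G$, the only non-conical limit points are bounded parabolic, which by Lemma \ref{parabnoatom2} have zero $\mu_1$-mass; hence $\mu_1(\Pi^c_{r,\epsilon,R}(go)) = \mu_1(\Pi_{r,\epsilon,R}(go))$, and the Partial Shadow Lemma then yields
$$
\sum \mu_1(\Pi) \;\ge\; \mu_1\Bigl(\bigcup \Pi\Bigr) \;\ge\; \mu_1\bigl(\Pi^c_{r,\epsilon,R}(go)\bigr) \;\succ\; \exp(-\g G d(o, go)).
$$

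For the upper bound, every shadow lies in $\Pi_{2\tilde r}(go)$, whose $\mu_1$-mass is $\asymp \exp(-\g G d(o, go))$ by the Shadow Lemma; but because summing individual masses can overshoot the measure of the union, I treat the three subsums separately. For $\mathbf{ASha}$, all centers $ho$ with $h \in \Omega_{\tilde r, \epsilon, \tilde R}(go, n, \tilde\Delta)$ lie at distance $\asymp d(o, go) + n$ from $o$, and two such shadows $\Pi_r(h_1 o)$, $\Pi_r(h_2 o)$ can only overlap if $d(h_1 o, h_2 o) \prec r + \tilde\Delta$ (both points lie within $r$ of a common geodesic at essentially the same distance from $o$); by properness of the $G$-action the cover has bounded multiplicity, so $\mathbf{ASha} \prec \mu_1(\Pi_{2\tilde r}(go)) \prec \exp(-\g G d(o, go))$. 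For $\mathbf{ESha}$, Lemma \ref{UnionSets}(1) ensures $\sharp \mathbb X$ is uniformly bounded and $d(o, o_U) \asymp d(o, go)$ for $U \in \mathbb X$, so Lemma \ref{convps2} (via Lemma \ref{convpara}) controls the inner sum over $G_{U, L}$ by a constant depending only on $L$, giving $\mathbf{ESha} \prec \exp(-\g G d(o, go))$.

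The bound on $\mathbf{HSha}$ is the crux. Using the horoball-projection identity $d(o, h o_U) \asymp d(o, o_U) + d(o_U, h o_U)$ (a consequence of Lemma \ref{contracting}(3) applied to a geodesic from $o$ to $h o_U$) together with the Shadow Lemma applied to a nearby orbit point $h \cdot t_U o$, each shadow mass factors as $\exp(-\g G d(o, o_U))\exp(-\g G d(o_U, h o_U))$. Summing over $h \in G_{U, L}$ via Lemma \ref{GUSeries} gives an upper bound by $\mathcal A_U(o_U, d(o_U, z) + L, \Delta)$; substituting $d(o_U, z) \asymp n - i$, $d(o, o_U) \asymp d(o, go) + i$, and $\sharp \mathbb U_r(go, i, \Delta) \prec \exp(i \g G)$ from Lemma \ref{UnionSets}, the $i$-th layer contributes $\prec \exp(-\g G d(o, go)) \cdot \max_U \mathcal A_U(o_U, n - i + L, \Delta)$. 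Exchanging the outer sum over $i$ with the inner sum over $j \ge n - i + L$ in the definition of $\mathcal A_U$ introduces a linear weight in $j$ and collapses to
$$
\mathbf{HSha} \;\prec\; \exp(-\g G d(o, go)) \cdot \sum_{j \ge L} j \cdot \sharp A_U(o_U, j, \Delta) \exp(-\g G j),
$$
which is precisely the DOP series (starting from $L$) --- finite by hypothesis, and arbitrarily small for large $L$. Recognizing via this Fubini-type rearrangement that the double sum over horoball layers and deep parabolic elements is exactly what DOP controls is the main obstacle and the structural reason this hypothesis is the right one for the main theorem.
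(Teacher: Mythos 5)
Your lower bound and your treatment of $\mathbf{ASha}$ and $\mathbf{ESha}$ are sound and essentially follow the paper's route (covering from Proposition \ref{ApproxShadow}, no atoms at parabolic points, the partial shadow lemma, bounded multiplicity of the annular shadows \`a la Coornaert, and Lemma \ref{convps2} for the finitely many exceptional horoballs). The problem is your bound on $\mathbf{HSha}$. The proposition is stated ``under the same assumption as Proposition \ref{ApproxShadow}'', i.e.\ only divergence type --- the DOP condition is \emph{not} a hypothesis. Your argument bounds $\mathbf{HSha}$ by (a tail of) the DOP series and declares it ``finite by hypothesis''; without DOP that series may diverge and your upper bound is vacuous. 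Worse, the upper bound of (\ref{F1}) with $g=1$ is exactly what the paper needs in the direction $(4)\Rightarrow(1)$ of the main theorem: there one combines $\mathbf{HSha}(1,n,L)\prec 1$ (uniformly in $n$) with the lower bound (\ref{LepsilonDOP}) to \emph{deduce} the DOP condition. If the proof of the upper bound already invoked DOP, that direction would be circular. (What you have actually re-derived is Lemma \ref{Uexpdecay}(1), which the paper proves separately and only uses later, in Section \ref{Section6}, where DOP is available.)

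The missing idea is that the upper bound must come from a purely geometric bounded-multiplicity (``evenly covered'') argument for the horospherical shadows, not from summing a series. The paper shows that a fixed $\xi\in\pX$ lies in at most a uniform number of shadows $\Pi_r(h\cdot t_Uo)$ with $h\in G_{U,L}$, $U\in\mathbb U_{r,n}(go)$: if $\xi\in\Pi_r(h\cdot t_Uo)$, then the witnessing point $w\in[1,\xi]$ is $(r+M)$-close to $\partial U$, so by the convexity of horoballs (Lemma \ref{horoballconvexity}) $w$ is $K$-close to the entry or exit point of $[1,\xi]$ in $U$; the choice $L>M+K+r$ rules out the entry point (since $d(z_-,z_+)>2L$ for $h\in G_{U,L}$), hence $d(z_+, h\cdot t_Uo)\le r+K$ and properness of the action bounds the multiplicity. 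Together with the containment of all shadows in $\Pi_{2\tilde r}(go)$ and the ordinary Shadow Lemma, this gives $\mathbf{HSha}+\mathbf{ESha}\prec\mu_1(\Pi_{2\tilde r}(go))\prec\exp(-\g G\, d(o,go))$ with no convergence hypothesis on the parabolic series.
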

\begin{rem}
Each of the above sums   depends on the important parameter $L>0$, which shall be tweaked so that $\mathbf {HSha} $ and $\mathbf {ESha}$ become arbitrarily small in Section \ref{Section6}.  The remainder, subSections \ref{MeasHSha} and \ref{MeasESha}, of this section is to setup necessary estimates on  $\mathbf {HSha} $ and $\mathbf {ESha}$ to implement this goal.
\end{rem}

In the proof, we need the following convexity property for a horoball $U\in \mathbb U$:  
\begin{lem}\cite[Lemma 2.3]{Hru}\label{horoballconvexity}
Let  $\gamma$ be a geodesic with endpoints  $\gamma_-, \gamma_+ \in \partial U$. For any $r>0$, there exists  $K=K(r)$ such that $N_r(\partial U)\cap \gamma\subset N_K(\{\gamma_-, \gamma_+\})$. 
\end{lem}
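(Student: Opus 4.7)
The plan is to use Busemann-function geometry at the parabolic fixed point of $U$, combined with the thinness of ideal triangles in the $\delta$-hyperbolic space $X$. Let $p \in \pX$ denote the parabolic point with $\Lambda U = \{p\}$, and let $\beta : X \to \mathbb{R}$ be a Busemann function based at $p$. Horoballs centered at $p$ are uniformly Hausdorff-close to the sublevel sets of $\beta$, so after adjusting the additive constant of $\beta$ we may assume $\{\beta \le -C_1\} \subset U \subset \{\beta \le C_1\}$ for a constant $C_1 = C_1(\delta)$. Since $\beta$ is $1$-Lipschitz and vanishes within $C_1$ of $\partial U$, we obtain the comparison $d(x, \partial U) \ge \abs{\beta(x)} - C_1$ for every $x \in X$.

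Next I consider the two geodesic rays $\rho^\pm = [\gamma_\pm, p)$. Along each such ray the Busemann function decreases linearly: $\beta(\rho^\pm(s)) = \beta(\gamma_\pm) - s$. The ideal triangle with vertices $\gamma_-, \gamma_+$ and $p$ is uniformly $\delta'$-thin in $X$, a standard fact in proper $\delta$-hyperbolic spaces; hence every point $\gamma(t) \in \gamma$ lies within $\delta'$ of some $y \in \rho^- \cup \rho^+$. Suppose $y \in \rho^-$ (the other case is symmetric). Then $\abs{d(\gamma_-, y) - t} \le \delta'$, so $\beta(y) \le \beta(\gamma_-) - d(\gamma_-, y) \le C_1 - t + \delta'$, and the $1$-Lipschitz property yields $-\beta(\gamma(t)) \ge t - C_1 - 2\delta'$. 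Combined with the comparison above,
\[
d(\gamma(t), \partial U) \ge t - 2C_1 - 2\delta'.
\]
By symmetry, if instead $y \in \rho^+$, then $d(\gamma(t), \partial U) \ge (L - t) - 2C_1 - 2\delta'$, where $L := d(\gamma_-, \gamma_+)$.

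Setting $K := r + 2C_1 + 2\delta'$, the assumption $d(\gamma(t), \partial U) \le r$ forces $\min(t, L - t) \le K$, which is equivalent to $d(\gamma(t), \{\gamma_-, \gamma_+\}) \le K$. The main obstacle is verifying the two background ingredients---the $O(1)$-proximity of horoballs to Busemann sublevel sets, and the uniform thinness of ideal triangles---with constants depending only on the hyperbolicity constant of $X$ rather than on the particular horoball $U$. Both are classical in proper $\delta$-hyperbolic spaces (cf.\ \cite{GH}) but need some care at this level of generality, since $X$ is not assumed to be CAT$(-1)$ or Riemannian; granted these, the remaining argument is a direct linear-growth estimate in the Busemann function.
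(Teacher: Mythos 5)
Your argument is correct. Note that the paper itself offers no proof of this statement --- it is quoted as \cite[Lemma 2.3]{Hru} --- so there is no in-text argument to compare against; your write-up supplies the standard proof one would expect behind that citation. The two reductions you make are the right ones: (i) the lower bound $d(x,\partial U)\ge \abs{\beta(x)}-C_1$ follows correctly from the sandwich $\{\beta\le -C_1\}\subset U\subset\{\beta\le C_1\}$ together with the $1$-Lipschitz property (points of $\partial U$ satisfy $\abs{\beta}\le C_1$ by continuity), and (ii) the thinness of the ideal triangle $(\gamma_-,\gamma_+,p)$ plus the coarse linear decay of $\beta$ along the rays $[\gamma_\pm,p)$ gives $d(\gamma(t),\partial U)\ge \min(t,L-t)-O(1)$, which is exactly the statement. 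The two background facts you flag are genuinely the only delicate points, and both hold in proper $\delta$-hyperbolic spaces with constants depending only on $\delta$: in Bowditch's construction (the source of the horoball system $\mathbb U$ used here) horoballs are essentially defined as coarse sublevel sets of horofunctions, and since $\mathbb U$ is $G$-finite one may in any case take the maximum of the constants over the finitely many orbits. One cosmetic caveat: in a general hyperbolic space $\beta(\rho^\pm(s))=\beta(\gamma_\pm)-s$ holds only up to an additive error of order $\delta$ when $\beta$ is based at $p$ via a different ray, but this is absorbed into $C_1$ and does not affect the conclusion.
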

\begin{proof}[Proof of Proposition \ref{ApproxShadowWithMeasure}]
Since $G$ is of divergent type, we have that $\mu_1$ has no atoms at parabolic points by Lemma \ref{parabnoatom2}. So $\mu_1(\Pi^c_{r, \epsilon, R}(go))=\mu_1(\Pi_{r, \epsilon, R}(go))$.  By the partial shadow Lemmas \ref{PShadowX} and  \ref{OmbreLem}, we have 
$$
\mu_1(\Pi_{r, \epsilon, R}(go))\asymp \exp( -\g G (d(o, go)) \asymp \mu_1(\Pi_{2\tilde r}(go)).
$$
The direction ``$\prec$" follows from Proposition \ref{ApproxShadow}.   For the direction ``$\succ_r$", it sufices  to establish that any $\xi\in \pX$ is  evenly covered by shadows from  $\Pi_1$ and $\Pi_2\cup \Pi_3.$

The evenly covering property of the  collection $\Pi_1=\{\Pi_r(go): g\in A(o, n, \tilde\Delta)\}$ is known  in \cite[Lemme 6.5]{Coor}:
\begin{claim} \label{evenlycovered}
For any $r>0$, there exists $N=N(r, \tilde \Delta)>0$ such that for any $n\ge 0$ a point $\xi\in \pX$ is covered at most $N$ shadows from the collection $\{\Pi_r(go): g\in A(o, n, \tilde\Delta)\}$.
\end{claim}
The remaining of the proof is to prove the above evenly covering property for
$$\Pi_2\cup \Pi_3=\{\Pi_r(h\cdot t_Uo):h \in G_{U, L}, U \in \mathbb U_{r,
n}(go) \}.$$   
Consider one shadow $\Pi_r(h\cdot t_Uo)$  which contains a fixed point $\xi \in \pX$. Recall that $\partial U \subset N_M(G_U\cdot t_Uo)$ by Lemma \ref{horoball}.  Since $\xi \in \Pi_r(h\cdot t_Uo)$, there exists $w \in [1, \xi]$ such that $d(w, h\cdot t_Uo) \le r$ and then $d(w, \partial U) \le r+M$. 

Let $z_-, z_+$ be the corresponding entry and exit points of $[1, \xi]$ in $U$.  By the above convexity of horoballs, we have 
\begin{equation}\label{zpmwEQ}
\min\{d(z_-, w), d(z_+, w)\} \le K
\end{equation}
where $K:=K(r+M)$ is given by Lemma \ref{horoballconvexity}. Observe that $d(z_-, w) > K$. Indeed, if not, we have $d(z_-, w)\le K$ and then $d(z_-, h\cdot t_Uo)\le r+K$. By definition of $G_{U,L}$, we have    $d(h\cdot t_Uo, z_+) \le  M$ in (\ref{htUzplus}) so    $d(z_-, z_+)\le M+K+r$. On the other hand,  by (\ref{zpmdefn}), we have $d(z_-, z_+)>2L$. We got a contradiction by assuming that $$L>M+K+r.$$   Thus, it follows by (\ref{zpmwEQ}) that $d(z_+, w)\le K$, and $d(z_+, h\cdot t_Uo)\le d(z, w)+d(w, h\cdot t_Uo)\le r+K.$ Hence, the proper action implies that there is at most a uniform number of $\Pi_r(h\cdot t_Uo)$ containing $\xi$. The proof is complete.
\end{proof}

\subsection{Measuring horospherical  shadows}\label{MeasHSha}
This subsection aims to estimate the sum  $\mathbf {HSha}(g, n, L)$  in (\ref{F1})  which measures the union of shadows $\Pi_r(h\cdot t_Uo)$ on each   horoball  $U \in \mathbb U_{r, n}(go, i, \Delta)$ where $0\le i \le n-L+\Delta$. We first sum up the ones from one single horoball. 

\begin{lem}[Single horoball in $i$-th annulus]\label{SUexpdecay}
Under the same assumption as Proposition \ref{ApproxShadow}. Given $U \in \mathbb U_{r}(go, i, \Delta)$ for $i\le n-L+\Delta$, we have   
\begin{equation}\label{SUepsilonDOP}
\begin{array}{cc}
   \exp( -\g G (d(o, go) +i)) \cdot \mathcal{A}_{U}(o_U, n-i+2L, \Delta)\\
   \\
 \prec  \sum\limits_{\Pi \in \mathbf{Sha}_L(U)} \mu_1(\Pi) \prec \\
 \\
     \exp( -\g G (d(o, go) +i)) \cdot \mathcal{A}_{U}(o_U, n-i+L, \Delta).
\end{array}
\end{equation}
 
\end{lem}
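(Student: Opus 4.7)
The strategy is a three-step reduction: estimate each individual shadow $\Pi_r(h\cdot t_Uo)$ via the Shadow Lemma; decompose the exponent $\g G d(o, h\cdot t_U o)$ into a ``radial'' piece $\g G d(o, o_U)$ (which yields the prefactor $\exp(-\g G(d(o,go)+i))$) and a ``tangential'' piece $\g G d(o_U, ho_U)$ along the horosphere $\partial U$; and finally invoke Lemma~\ref{GUSeries} to recognize the resulting Poincar\'e-type sum over $G_{U,L}$ as the annular series $\mathcal{A}_U(o_U,\cdot,\Delta)$.

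Since $r>r_0$ from Constants~\ref{Const2}, the Shadow Lemma~\ref{OmbreLem} gives $\mu_1(\Pi_r(h\cdot t_U o))\asymp_r \exp(-\g G d(o, h\cdot t_U o))$ for every $h\in G_{U,L}$. The key geometric input is the horoball distance identity
\[
\bigl|\, d(o, v) - \bigl( d(o, o_U) + d(o_U, v) \bigr)\,\bigr| \le C,\qquad v\in \partial U,
\]
for a uniform constant $C$, which I would deduce from the quasi-convexity of $U$ (Constant~\ref{Const}), horoball convexity (Lemma~\ref{horoballconvexity}), and the contracting property (Lemma~\ref{contracting}) by checking that the concatenation $[o, o_U]\cdot[o_U, v]$ is a uniform quasi-geodesic. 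Applied to $v=h o_U$, together with $d(t_Uo, o_U), d(h\cdot t_Uo, ho_U)\le M$ from (\ref{oUtUo}), this gives $d(o, h\cdot t_U o) = d(o, o_U) + d(o_U, h o_U) + O(1)$.

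It remains to identify $d(o, o_U)$ and $d(o_U, z)$ in terms of $d(o,go)$, $i$, $n$. From the setup in the proof of Lemma~\ref{UnionSets}, the entry point $z_-$ of $\gamma$ in $N_C(U)$ lies on $[x, z]_\gamma$ with $d(z_-, o_U)\le M$, and $d(go, x)\le 2r$ by (\ref{GoX}), so $d(o, o_U) = d(o, go) + d(go, o_U) + O(1)$; combined with $d(go, o_U)\asymp_\Delta i$ from (\ref{anndef}) this gives $d(o, o_U) = d(o, go) + i + O(1)$, and the same analysis applied to $z$ via (\ref{zdefn}) produces $d(o_U, z) = n - i + O(1)$. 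Summing the previous display over $h\in G_{U,L}$, factoring out $\exp(-\g G d(o, o_U))$, and invoking Lemma~\ref{GUSeries} sandwiches $\sum_{\Pi\in\mathbf{Sha}_L(U)}\mu_1(\Pi)$ between $\exp(-\g G(d(o,go)+i))\cdot\mathcal{A}_U(o_U, d(o_U, z)+2L, \Delta)$ and the analogous expression with $L$ in place of $2L$; substituting $d(o_U, z)=n-i+O(1)$ and absorbing the residual $O(1)$ into $L$ (permissible since $L$ will be tuned large in Section~\ref{Section6}) yields (\ref{SUepsilonDOP}). The main obstacle is the horoball distance identity above: immediate from a Busemann function computation in Cartan--Hadamard models, but in the abstract $\delta$-hyperbolic setting it requires a careful quasi-geodesic verification at the foot $o_U$; a secondary bookkeeping issue is that $\mathcal{A}_U(o_U, R, \Delta)$ need not be $\asymp$-invariant under bounded shifts of $R$, which is exactly why the $O(1)$ errors are absorbed into the $L$-gaps of the sandwich rather than into the $\asymp$ symbol.
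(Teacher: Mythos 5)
Your proposal is correct and follows essentially the same route as the paper: Shadow Lemma for each $\Pi_r(h\cdot t_Uo)$, a decomposition $d(o,h\cdot o_U)\approx \big(d(o,go)+i\big)+d(o_U,h\cdot o_U)$, and Lemma \ref{GUSeries} to identify the resulting sum over $G_{U,L}$ with $\mathcal{A}_U(o_U,\cdot,\Delta)$. The only cosmetic difference is that the paper derives the distance decomposition from additivity along the geodesic $\gamma$ through $U$ (using $z_-\sim_M o_U$, $z_+\sim_M h\cdot o_U$ and $d(o,z)=d(o,go)+n$) rather than from a general projection identity at the foot $o_U$, and it likewise absorbs the bounded errors into the $L$-gap exactly as you indicate.
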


\begin{proof}
Recall that the proof of Lemma \ref{UnionSets} (1) shows that $z_- \in [x, z]_\gamma$, and thus  $d(o, z_+)  = d(o, z)-d(z_-, z)+d(z_-, z_+)$. By (\ref{htUzplus}), we have $z_+\sim_M h\cdot o_U$ and $z_-\sim_M o_U$. By (\ref{zdefn}), we obtain  
\begin{equation}\label{EQohtuo}
\begin{array}{rl}
d(o, h\cdot o_U)  &\simeq_{4M} \big(d(o, go) + n\big) + d(o_U, h\cdot o_U)  - d(o_U, z).
\end{array}
\end{equation}

Since $d(x, z)=n$ and $z_- \in [x, z]_\gamma$, we have $n=d(x, z_-)+d(z_-,z)$.  Thus, 
$$
\begin{array}{rl}
 d(o_U, z) +d(o_U, go) \simeq_{4r} n. &\; [(\ref{GoX}): go\sim_{2r} x]
\end{array} 
$$
which in turn implies 
\begin{equation}\label{EQoUz}
d(o_U, z) \simeq_{\Delta+4r} n-i.
\end{equation}
where by (\ref{anndef}), $d(go, o_U)\simeq_\Delta i$ for $U \in \mathbb U_{r, n}(go, i, \Delta)$.
 
By (\ref{EQohtuo}) and (\ref{EQoUz}), we have
$$
\begin{array}{rl}
&\exp(-\g G d(o, h\cdot o_U)) \\
 \asymp_{r, \Delta}& \exp(-\g G (d(o, go)+i) ) \cdot \exp(-\g G d(o_U,
h\cdot o_U)).
\end{array}
$$

As a consequence, 
$$
\begin{array}{lll}
\sum\limits_{h\in G_{U, L}}  \exp(-\g G d(o, h\cdot t_Uo)) &  \asymp_{r, \Delta  }   \exp(-\g G (d(o, go) +i))  \cdot  \\
&\;\;\left(  \sum\limits_{h\in G_{U, L}}  \exp(-\g G d(o_U, h\cdot t_Uo)) \right)
\end{array}
$$
where      $t_Uo \sim_M o_U$ by (\ref{oUtUo}) is used. 

The conclusion follows from  the shadow lemma \ref{OmbreLem} and Lemma \ref{GUSeries}.
\end{proof}

Recall that $\mathbb{\tilde U}:=\{U_k \in \mathbb U: 1\le k\le m\}$ is a choice of representatives in each $G$-orbit.

\begin{lem}[All Horoballs] \label{Uexpdecay} 
 Under the same assumption as Proposition \ref{ApproxShadow},
\begin{enumerate}
\item
The following holds for   any $L\gg L_0$ and $n\gg 0$:
\begin{equation}\label{UepsilonDOP}
\begin{array}{rl}
 \mathbf {HSha}(g, n, L) \prec \exp( -\g G  d(o, go) ) \cdot  \sum\limits_{V\in \mathbb{\tilde U}}  \left(\sum\limits_{j\ge 2L} \mathcal{A}_{V}(o_V, j, \Delta)\right)
\end{array}
\end{equation}

\item 
Assume that $G$ has purely exponential horoball growth.  Then
\begin{equation}\label{LepsilonDOP}
\begin{array}{rl}
\mathbf {HSha}(1, n, L)\succ   \sum\limits_{V\in \mathbb{\tilde U}}  \Big(\sum\limits_{j\ge 2L}^n \mathcal{A}_{V}(o_V, j, \Delta)\Big).
\end{array}
\end{equation}
\end{enumerate}
\end{lem}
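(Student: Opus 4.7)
The plan is to unpack $\mathbf{HSha}$ layer by layer along the horoball annuli from Lemma \ref{UnionSets}, feed each innermost sum into Lemma \ref{SUexpdecay}, and then convert the resulting double sum over $i$ and $U$ into a tail sum in $j$. The key accounting step is that the horoball count $\sharp (\mathbb U_r(go, i, \Delta)\cap G\cdot V)$, bounded above by $\exp(i\g G)$ from Lemma \ref{UnionSets}(2) and --- under the purely exponential growth hypothesis of (2) --- also bounded below by $\exp(i\g G)$ via Corollary \ref{SameHoroballCounting}, cancels exactly against the $\exp(-\g G i)$ factor provided by Lemma \ref{SUexpdecay}. This leaves only the tail factors $\mathcal A_U(o_U, n-i+L, \Delta)$ (respectively $\mathcal A_U(o_U, n-i+2L, \Delta)$) to be summed in $i$.

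For (1), I start from
\[
\mathbf{HSha}(g, n, L) \;=\; \sum_{i=0}^{n-L+\Delta} \sum_{U \in \mathbb U_r(go, i, \Delta)} \sum_{\Pi \in \mathbf{Sha}_L(U)} \mu_1(\Pi),
\]
apply the upper half of Lemma \ref{SUexpdecay} to each inner sum, split $U$ by $G$-orbit type, invoke the counting bound of Lemma \ref{UnionSets}(2) on each orbit type, and pass from $\mathcal A_U$ to $\mathcal A_V$ for the representative $V\in \mathbb{\tilde U}$ via Lemma \ref{biginclusion} (which only costs a fixed shift $K$). This produces
\[
\mathbf{HSha}(g, n, L) \;\prec\; \exp(-\g G d(o,go)) \sum_{V \in \mathbb{\tilde U}} \sum_{i=0}^{n-L+\Delta} \mathcal A_V(o_V, n-i+L-K, \Delta).
\]
The substitution $j = n-i+L-K$ converts the $i$-sum into a sum of $\mathcal A_V(o_V, j, \Delta)$ over a finite range of $j$; taking $L\gg L_0, K, \Delta$ lets the fixed shift be absorbed so that the range lies in the tail starting around $2L$, giving the stated bound.

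For (2), specializing to $g=1$ so $d(o, go)=0$, I invoke the lower half of Lemma \ref{SUexpdecay} along with Corollary \ref{SameHoroballCounting} (which identifies $\mathbb U_r(o, i, \Delta)$ with $H(o, i, \Delta)$ for $i\le n-\Delta$) and the per-type lower bound $\sharp H_V(o, i, \Delta)\succ \exp(i\g G)$ afforded by the purely exponential horoball growth hypothesis. The same layered cancellation produces $\mathcal A_V(o_V, n-i+2L+K, \Delta)$ per layer (the $+K$ coming from using Lemma \ref{biginclusion} in the opposite direction), and summing over $i$ and reindexing $j = n-i+2L+K$ yields the claimed $\sum_{j=2L}^{n} \mathcal A_V(o_V, j, \Delta)$ after absorbing constants.

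I expect the main obstacle to be careful tracking of all the additive shift constants --- $K$ from switching basepoints via Lemma \ref{biginclusion}, $\Delta$ from the layering width, the asymmetric shifts $+L$ versus $+2L$ in the upper and lower halves of Lemma \ref{SUexpdecay}, and the truncation $i\le n-\Delta$ coming from Corollary \ref{SameHoroballCounting} --- so that, after the reindexing, the interval of $j$-values matches the claimed ranges $[2L,\infty)$ in (1) and $[2L, n]$ in (2), up to the multiplicative constants hidden in $\prec$ and $\succ$. A secondary point is that the lower bound in (2) uses one orbit type $V$ per layer $i$; but since the final bound is stated as a sum over all $V \in \mathbb{\tilde U}$, this causes no loss.
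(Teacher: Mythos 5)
Your proposal is correct and follows essentially the same route as the paper: decompose $\mathbf{HSha}$ along the annular layers of Lemma \ref{UnionSets}, feed each layer into the two halves of Lemma \ref{SUexpdecay}, cancel the horoball count $\asymp\exp(i\g G)$ against the factor $\exp(-\g G i)$, pass to orbit representatives via Lemma \ref{biginclusion}, and reindex $j=n-i+\mathrm{const}$. The only imprecision --- that the reindexed range of $j$ starts at $2L-\Delta-K$ (resp.\ $3L+K-\Delta$) rather than exactly $2L$ --- is shared with the paper's own proof and is immaterial for the way the lemma is used.
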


\begin{proof}
\textbf{(1).} By Lemma \ref{UnionSets} (2), we have
\begin{equation}\label{sumhoroball}
\sharp \mathbb U_{r}(go, i, \Delta) \prec \exp(i\g G).
\end{equation}
for any $i\le n-L+\Delta$. Moreover, for any $U=gV \in \mathbb U_{r}(go, i, \Delta)$, we have
\begin{equation}\label{SeriesConjugate}
\mathcal{A}_{V}(o_V, n-i+2L+K, \Delta)\prec_\Delta \mathcal{A}_{U}(o_U, n-i+L, \Delta) \prec_\Delta \mathcal{A}_{V}(o_V, n-i+L-K, \Delta)
\end{equation}
where the  constant $K$ is given by Lemma \ref{biginclusion}.

Set $\Delta>K$. Now taking into account (\ref{sumhoroball}) and (\ref{SeriesConjugate}), we sum up (\ref{SUepsilonDOP}) over $0<i\le n-L+\Delta$ to get  
 
$$
\begin{array}{rl}
 \mathbf {HSha}(g, n, L) \prec  & \sum\limits_{i= 0}^{n-L+\Delta}  \left( \sum\limits_{V\in \mathbb{\tilde U}} \mathcal{A}_{V}(o_V, n-i+L-K, \Delta) \right)\\
\\
\prec  &\sum\limits_{j\ge 2L-\Delta-K}^{n+L-K}\left(\sum\limits_{V\in \mathbb{\tilde U}}   \mathcal{A}_{V}(o_V, j,\Delta) \right) \\
\\
\prec & \sum\limits_{V\in \mathbb{\tilde U}}  \Big(\sum\limits_{j\ge L}  \mathcal{A}_{V}(o_V, j,\Delta)\Big).

\end{array}
$$
The statement (1) is proved.

\textbf{(2).} Assume now that the horoball growth function is purely exponential. By Lemma \ref{orbit=horoball},  
\begin{equation}\label{sumhoroball2}
\exp(i\g G) \prec \sharp H_V(o, i, \Delta).
\end{equation}
for each $V\in \mathbb U$. Thus, by  Lemma \ref{SameHoroballCounting}, $H_V(o, i, \Delta)\asymp \mathbb U_{r}(o, i, \Delta)$ for $i<n-L+\Delta$.

For $L>\Delta,$  the (\ref{SeriesConjugate}) implies
$$
\begin{array}{rl}
 \mathbf {HSha}(1, n, L) \succ  & \sum\limits_{i= 0}^{n-L+\Delta}  \left( \sum\limits_{V\in \mathbb{\tilde U}} \Big(\mathcal{A}_{V}(o_V, n-i+2L+K, \Delta ) \Big) \right)\\
\\
\succ  &\sum\limits_{j\ge 3L-\Delta+K}^{n+2L+K} \left(   \sum\limits_{V\in \mathbb{\tilde U}}\Big(\mathcal{A}_{V}(o_V, j, \Delta ) \Big) \right)\\
\\
\succ & \sum\limits_{V\in \mathbb{\tilde U}}  \Big(\sum\limits_{j\ge 2L}^n \mathcal{A}_{V}(o_V, j, \Delta)\Big),

\end{array}
$$
proving the statement (2).
\end{proof}

\subsection{Exceptional horospherical shadows}\label{MeasESha}
We close this section by estimating the last piece, $\mathbf{ESha}(g, n, L)$, which is the measure of shadows from ``exceptional'' horoballs in $\mathbb X$.

\begin{lem}\label{farhoroball}
 For any $\varepsilon>0$ there exists $L_2=L_2(\epsilon)>0$ such that the
following holds
\begin{equation}\label{F2}
\mathbf{ESha}(g, n, L)\prec  \exp(-\g G
d(o, go))\cdot \varepsilon
\end{equation}
for any $n\gg 0$ and $L>L_2$. 
 
\end{lem}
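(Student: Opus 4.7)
The plan is to use the Shadow Lemma to bound each $\mu_1(\Pi_r(h\cdot o_U))$ by the visual factor $\exp(-\g G d(o, h\cdot o_U))$, split this factor through horoball quasi-convexity, and then exploit (i) the uniform upper bound $|\mathbb X| \le N_1(r)$ coming from local finiteness of $\mathbb U$, and (ii) the tail estimate $\mathcal S_U(o_U,o_U,R) < \epsilon_0$ for $R$ large, furnished by Lemma \ref{convps2}.

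First, applying Lemma \ref{OmbreLem} gives
$$\mathbf{ESha}(g,n,L) \prec \sum_{U \in \mathbb X}\sum_{h \in G_{U,L}} \exp\bigl(-\g G d(o,h\cdot o_U)\bigr).$$
Since $o_U$ is a nearest point of $U$ to $o$ and $h\cdot o_U \in \partial U$, Lemma \ref{contracting}(3) applied to the geodesic $[o,h\cdot o_U]$ yields $d(o_U,[o,h\cdot o_U]) \le M$, whence $d(o,h\cdot o_U) \ge d(o,o_U) + d(o_U,h\cdot o_U) - 2M$. For $U \in \mathbb X$, Lemma \ref{UnionSets}(1) gives $d(go,U) \le 5r$, so $d(o,o_U) \ge d(o,go) - C_1$ for some $C_1 = C_1(r,M)$.

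Next, local finiteness together with the $G$-finiteness of $\mathbb U$ implies that only boundedly many horoballs lie within distance $5r$ of $go$, giving $|\mathbb X| \le N_1(r)$ uniformly in $g,n$. Combining the above,
$$\mathbf{ESha}(g,n,L) \prec N_1 \cdot \exp\bigl(-\g G d(o,go)\bigr) \cdot \max_{U \in \mathbb X}\sum_{h \in G_{U,L}} \exp\bigl(-\g G d(o_U,h\cdot o_U)\bigr).$$
By the argument in the proof of Lemma \ref{GUSeries}, the inner sum is dominated by $\mathcal S_U(o_U,o_U,L-C_2)$ for a constant $C_2 = C_2(r)$. Finally, Lemma \ref{convps2} produces, for any $\epsilon_0 > 0$, a radius $R=R(\epsilon_0)$ such that $\mathcal S_U(o_U,o_U,R) < \epsilon_0$ for all $U \in \mathbb U$; taking $L_2 := R + C_2$ and $\epsilon_0 := \varepsilon/N_1$ yields (\ref{F2}).

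The argument is largely bookkeeping, since the three ingredients above combine essentially formally; the only conceptually delicate step is the horoball-convexity inequality $d(o,h\cdot o_U) \gtrsim d(o,o_U) + d(o_U,h\cdot o_U)$, a standard device already used throughout the paper. The reason the argument for $\mathbb X$ is much simpler than that for $\mathbf{HSha}$ in Section \ref{MeasHSha} is precisely that the exponential factor $\exp(-\g G d(o,o_U))$ does not proliferate across layers: $\mathbb X$ is a single finite set and sits at a uniformly bounded distance from $go$, so only one copy of $\exp(-\g G d(o,go))$ needs to be tracked.
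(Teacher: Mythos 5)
Your argument is correct and follows essentially the same route as the paper: the Shadow Lemma converts $\mathbf{ESha}$ into a sum of exponential weights, $\mathbb X$ is uniformly finite because $d(go,U)\le 5r$, the weight $\exp(-\g G d(o,h\cdot o_U))$ is split into $\exp(-\g G d(o,go))$ times a residual factor, and the residual sum is killed by the uniform tail estimate of Lemma \ref{convps2}. The only (harmless) difference is that you perform the splitting at the foot $o_U$ via the projection Lemma \ref{contracting}(3), whereas the paper splits at the reference point $x$ with $go\sim_{2r}x$ on the geodesic $\gamma$ and obtains the tail threshold $n+L-3r$ instead of your $\approx 2L$; both thresholds are eventually larger than the radius furnished by Lemma \ref{convps2}, so both yield (\ref{F2}).
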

\begin{proof}
By $(\ref{htUzplus}: ho_U\sim_M z_+)$ and $(\ref{oUtUo}: o_U\sim_M t_Uo)$, we have $(ht_Uo\sim_{2M} z_+)$.
Since $d(o, z_+) = d(o, x) + d(x, z_+)$ and $d(o, go)=d(o, x)$, we have 
\begin{equation}\label{ohtuo}
\begin{array}{lr}
d(o, h\cdot t_Uo) \simeq_{6r} d(o, go) + d(go, h\cdot t_Uo). &  (\ref{GoX}: go\sim_{2r} x) 
 
\end{array}
\end{equation}

Since $d(go, U) < 5r$ for $U\in \mathbb X$, we have that $\mathbb X$ is a finite set.   By Lemma \ref{convps2} we have 
\begin{equation}\label{GUsum}
\sum\limits_{h \in G_U} \exp(-\g Gd(v, hw)) <   \infty
\end{equation}
where $v, w\in N_{5r}(U)$. We apply (\ref{GUsum})   for $v:=go, w:=t_Uo$.  

Noting that $$d(go, h\cdot t_Uo)\ge d(x, z_+)-d(x, go)-d(z_+, ht_Uo)\ge n+L-3r,$$ so by (\ref{ohtuo}) we have
$$
\begin{array}{rl}
\mathbf{ESha}(g, n, L)\prec&\sum\limits_{U \in \mathbb X}\left(\sum\limits_{h \in G_U} \exp(-\g G d(o, go)) \cdot \exp(-\g Gd(go, h\cdot t_Uo))\right) \\
\prec &  \exp(-\g G d(o, go))) \cdot \left(\sum\limits_{U \in \mathbb X}\sum\limits_{h \in G_U}^{d(go, h\cdot t_Uo)\ge n+L-3r}\exp(-\g Gd(z, hw))\right).
\end{array}
$$
Hence, the lemma follows from the convergence of  (\ref{GUsum}).  
\end{proof}

\section{Proof of Main Theorem}\label{Section6}
Recall the goal of the paper is to prove
\begin{thm}
Suppose $G$ admits a cusp-uniform action on a proper hyperbolic space $(X,d)$ such that $0<\g G <\infty$ and $G$ is of divergent type. Then the following statements are equivalent:
\begin{enumerate}
\item
$G$ satisfies the DOP condition.  
\item
$G$ has the purely exponential orbit growth.
\item
$G$ has the purely exponential orbit growth in (partial) cones.
\item
$G$ has the purely exponential horoball growth.  
\end{enumerate}
\end{thm}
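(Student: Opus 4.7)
The plan rests on the measure decomposition of Proposition \ref{ApproxShadowWithMeasure}, namely $\exp(-\g G d(o,go)) \asymp \mathbf{ASha}(g,n,\tilde\Delta) + \mathbf{HSha}(g,n,L) + \mathbf{ESha}(g,n,L)$, valid for every $g\in G$, every $n \gg 0$, and every $L > L_0$. Since Section \ref{Section4} already establishes $(2)\Leftrightarrow (4)$, and since $(3)\Rightarrow (2)$ is immediate on specializing to $g = 1$ (where $\Omega_r(o) = G$ and so $\Omega_{r,\epsilon,R}(o,n,\Delta) \subset A(o,n,\Delta)$, while the upper bound $\sharp A(o,n,\Delta) \prec \exp(n\g G)$ is universal by Lemma \ref{ballgrowthII}), the remaining tasks are $(1)\Rightarrow (3)$ and $(4)\Rightarrow (1)$.

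A preliminary reformulation of (1) will be needed: by an Abel-style rearrangement $\sum_{n} n a_n = \sum_j \sum_{n\ge j} a_n$ applied to $a_n = \sharp A_V(o_V, n, \Delta) \exp(-\g G n)$, the DOP condition is equivalent to
\begin{equation*}
\sum_{j \ge 0} \mathcal{A}_V(o_V, j, \Delta) < \infty \quad \text{for each } V \in \mathbb{\tilde U}.
\end{equation*}
In particular, under (1) the tail $\sum_{j \ge 2L} \mathcal{A}_V(o_V, j, \Delta)$ can be made arbitrarily small by choosing $L$ large.

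For $(1) \Rightarrow (3)$: fix $\varepsilon > 0$, then pick $L$ large enough that Lemma \ref{Uexpdecay}(1) yields $\mathbf{HSha}(g,n,L) < \varepsilon \exp(-\g G d(o,go))$ uniformly in $g, n$, and that Lemma \ref{farhoroball} yields the analogous bound for $\mathbf{ESha}$. The measure decomposition then collapses to $\mathbf{ASha}(g,n,\tilde\Delta) \asymp \exp(-\g G d(o,go))$. Each summand $\mu_1(\Pi_r(ho))$ of $\mathbf{ASha}$, for $h \in \Omega_{\tilde r, \epsilon, \tilde R}(go, n, \tilde\Delta)$, is asymptotic to $\exp(-\g G d(o,ho)) \asymp \exp(-\g G(d(o,go)+n))$ by the Shadow Lemma \ref{OmbreLem} combined with the annulus condition $|d(o,ho)-d(o,go)-n|\le\tilde\Delta$; together with the evenly-covering claim inside the proof of Proposition \ref{ApproxShadowWithMeasure}, this forces
\begin{equation*}
\sharp \Omega_{\tilde r, \epsilon, \tilde R}(go, n, \tilde\Delta) \asymp \exp(n \g G),
\end{equation*}
which is (3) in its partial-cone form and hence also implies the cone form and (2).

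For $(4) \Rightarrow (1)$: specialize the decomposition to $g = 1$, so that $\exp(-\g G d(o,o)) = 1$ and $\mathbf{HSha}(1,n,L)$ is uniformly bounded in $n$. Under the assumption (4) of purely exponential horoball growth, Lemma \ref{Uexpdecay}(2) provides the lower bound $\mathbf{HSha}(1,n,L) \succ \sum_{V \in \mathbb{\tilde U}} \sum_{j = 2L}^{n} \mathcal{A}_V(o_V,j,\Delta)$. Letting $n \to \infty$ then forces $\sum_{j \ge 2L} \mathcal{A}_V(o_V, j, \Delta) < \infty$ for each $V$, which by the reformulation above is equivalent to (1). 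The main obstacle I anticipate is not a new geometric ingredient but rather the parameter bookkeeping: one must verify that the choices of $L$, $\tilde\Delta$, and the threshold $n_0$ can be simultaneously realized across Proposition \ref{ApproxShadowWithMeasure} and Lemmas \ref{Uexpdecay}, \ref{farhoroball}, and that the DOP-to-tail reformulation is uniform over the finite family $\mathbb{\tilde U}$ so that the choice of $\varepsilon$ in the forward direction is independent of $V$.
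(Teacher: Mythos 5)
Your proposal is correct and follows essentially the same route as the paper: the same Abel-rearrangement reformulation of the DOP condition (Lemma \ref{DOP}), the same use of the measure decomposition of Proposition \ref{ApproxShadowWithMeasure} with $L$ chosen large via Lemmas \ref{Uexpdecay}(1) and \ref{farhoroball} to force $\mathbf{ASha}\asymp\exp(-\g G d(o,go))$ for $(1)\Rightarrow(3)$, and the same specialization to $g=1$ with Lemma \ref{Uexpdecay}(2) and $n\to\infty$ for $(4)\Rightarrow(1)$. The only cosmetic difference is that you invoke the ordinary Shadow Lemma \ref{OmbreLem} for the summands of $\mathbf{ASha}$ where the paper cites Lemma \ref{PShadowX}; both suffice since the summands are ordinary shadows.
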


The directions $(2)\Leftrightarrow (4)$ are already proved in Section \ref{Section4}, and   it is trivial that $(3)\Rightarrow (2)$. So it remains to show $(1)\Rightarrow (3)$ and $(4)\Rightarrow (1)$. 
Lets first explain a characterization  of the DOP condition.  

\begin{lem}[DOP condition]\label{DOP}
The group $G$ satisfies the DOP condition if and only if each series 
$$
\sum\limits_{p\in G_U} d(v, pv) \cdot \exp(-\g G d(v, pv)) \asymp_\Delta \sum_{j\ge 0} \sum\limits_{m\ge j}  \sharp A_U(v, m, \Delta) \cdot \exp(-\g G m)
$$
is convergent for any $U \in \mathbb U, v \in \partial U$ and $\Delta>1$.
\end{lem}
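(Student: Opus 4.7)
My plan is to split the proof into two independent verifications: first, establish the asymptotic comparison $\asymp_\Delta$ between the two displayed series; second, show that their convergence is equivalent to the DOP condition as originally formulated in $(\ref{BMS})$.

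For the first step, I would swap the order of summation in the double sum on the right. By Fubini,
$$
\sum_{j\ge 0}\sum_{m\ge j}\sharp A_U(v,m,\Delta)\exp(-\g G\, m)
=\sum_{m\ge 0}(m+1)\sharp A_U(v,m,\Delta)\exp(-\g G\, m).
$$
From $A_U(v,m,\Delta)=\{p\in G_U:m-\Delta\le d(v,pv)<m+\Delta\}$, every $p\in G_U$ with $\ell:=d(v,pv)$ lies in $A_U(v,m,\Delta)$ exactly for the (at most $2\Delta$, at least one) integers $m$ with $\ell-\Delta<m\le\ell+\Delta$. For any such $m$ one has $m\asymp_\Delta \ell$ and $\exp(-\g G\, m)\asymp_\Delta\exp(-\g G\,\ell)$. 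Reindexing the double sum as a sum over $p\in G_U$ followed by a sum over admissible $m$ therefore yields
$$
\sum_{m\ge 0}(m+1)\sharp A_U(v,m,\Delta)\exp(-\g G\, m)\;\asymp_\Delta\;\sum_{p\in G_U}(d(v,pv)+1)\exp(-\g G\, d(v,pv)).
$$
The $+1$ on the right produces an additive error $\sum_{p\in G_U}\exp(-\g G\, d(v,pv))$, which is finite by the parabolic convergence property: Lemma \ref{convpara} applies to the subgroup $H=G_U$ because $\Lambda(G_U)=\Lambda U$ is a single bounded parabolic point and hence a proper subset of $\Lambda G$. So the $+1$ contributes a bounded error and the remaining sum is comparable to $\sum_{p\in G_U} d(v,pv)\exp(-\g G\, d(v,pv))$. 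This establishes the claimed $\asymp_\Delta$.

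For the second step, I would change the basepoint from $v\in\partial U$ to any $o\in X$. Since $p\in G_U$ acts isometrically, $d(pv,po)=d(v,o)$, and the triangle inequality gives $|d(o,po)-d(v,pv)|\le 2d(o,v)$. As $d(o,v)$ is independent of $p$, this yields $\exp(-\g G\, d(o,po))\asymp\exp(-\g G\, d(v,pv))$, and the $(\!d(o,po)\!)$ and $(\!d(v,pv)\!)$ factors differ by an additive constant that is absorbed into the comparison. Thus the convergence of $\sum_{p\in G_U} d(v,pv)\exp(-\g G\, d(v,pv))$ is equivalent to that of $\sum_{p\in G_U} d(o,po)\exp(-\g G\, d(o,po))$, which is $(\ref{BMS})$ for $P=G_U$. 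As noted in the remark following the definition of the DOP condition, the latter depends only on conjugacy classes of $P\in\mathcal P$, and every conjugacy class of maximal parabolic subgroups arises as $G_U$ for some $U\in\mathbb U$; moreover $\mathbb U/G$ is finite. So checking $(\ref{BMS})$ for all $P\in\mathcal P$ amounts to checking the stated series for all $U\in\mathbb U$, completing the equivalence.

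The argument is mostly bookkeeping; the only nontrivial input is invoking Lemma \ref{convpara} to absorb the $\sum_{p\in G_U}\exp(-\g G\, d(v,pv))$ error, which itself requires that $\Lambda U$ is a proper subset of $\Lambda G$ (automatic here because $U$ is a horoball centered at a single parabolic point and $G$ has no global fixed point on $\Lambda G$). No partial-shadow machinery from Section \ref{Section3} is needed.
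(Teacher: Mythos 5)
Your proof is correct and follows essentially the same route as the paper: the paper likewise swaps the order of summation (via the identity $\sum_{m} m\,a_m=\sum_{j}\sum_{m\ge j}a_m$) and compares the resulting single sum with the sum over $p\in G_U$ by counting that each $pv$ lies in boundedly many annuli $A_U(v,m,\Delta)$. The only difference is that you explicitly handle the $(m+1)$ versus $m$ discrepancy via Lemma \ref{convpara} and spell out the basepoint change and the $\mathbb U$--$\mathcal P$ correspondence, bookkeeping the paper leaves implicit.
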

\begin{proof}
Observe that
$$
\sum\limits_{p\in G_U} d(v, pv) \cdot \exp(-\g G d(v, pv)) \asymp_\Delta \sum\limits_{m\ge 0} m\cdot \sharp A_U(v, m, \Delta) \cdot \exp(-\g G m),
$$
for any $\Delta \ge 1$. Indeed, the ``$\le$" direction is trivial. For the ``$\succ_\Delta$" direction, it suffices to notice that for $p\in G_U$ the point $pv$ lies in at most $2\Delta+1$ sets from $\{A_U(v, m, \Delta): m\in \mathbb N\}$.   

Denoting $a_m=\sharp A_U(v, m, \Delta) \cdot \exp(-\g G m)$, we have 
$$\sum_{m\ge 0} m\cdot a_m = \sum_{j\ge 0}\sum_{m\ge j} a_m.$$
The conclusion thus follows.
\end{proof}

 In terms of partial sum $\mathcal A_U(o, R, \Delta)$ (\ref{SeriesDelta}), the series in the lemma above can be repharased as 
$$
\sum\limits_{p\in G_U} d(v, pv) \cdot \exp(-\g G d(v, pv)) \asymp_\Delta \sum_{j\ge 0}   \sharp \mathcal A_U(v, j, \Delta),
$$
for each $U\in \mathbb U$.

\subsection{\textbf{Direction $(1)\Rightarrow (3)$}:} 
Assuming the DOP condition, we prove the purely exponential orbit growth in partial cones. Let $\mathbb U\subset \mathbb Y$ be any system of quasiconvex subsets in Convention \ref{ConvContracting}. For any $g \in G$, we shall prove that for sufficiently large  $\tilde \Delta$, the partial cone $\Omega_{\tilde r, \epsilon,\tilde R}(go, n, \tilde\Delta)$ is purely expeonential, where $\tilde r, \tilde R$ are given by Proposition \ref{ApproxShadow}. 
  
For any $L>L_0$,   there exist constants $C_1>0$ and $\tilde \Delta=\tilde \Delta(L) >0$ by Proposition \ref{ApproxShadowWithMeasure} such that the following holds
$$
\begin{array}{ll}
C_1\exp( -\g G d(o, go)) \le 
 \mathbf {ASha}(g, n, \tilde \Delta) + \mathbf {HSha}(g, n, L)+\mathbf {ESha}(g, n, L),
\end{array}
$$
where $$\mathbf {ASha}(g,n,\tilde \Delta)=\sum \mu_1(\Pi_r(go))$$ for  $g\in \Omega_{\tilde r, \epsilon, \tilde R}(go, n, \Delta)$

For $L\gg 0$, the DOP condition implies $$\left(\sum\limits_{j\ge 2L} \mathcal{A}_{V}(o_V, j, \Delta)\right) \to 0,$$ by Lemma \ref{DOP}.
So there exists $L_1=L_1(C_1)>0$ by Lemma
\ref{Uexpdecay} (1) such that
$$
 \mathbf {HSha}(g, n, L)    < C_1/3 \cdot \exp(-\g G
d(o, go)),
$$
for $L>L_1$. Moreover, by Lemma \ref{farhoroball}, there exists $L_2=L_2(C_1)>0$    such that
$$
 \mathbf {ESha}(g, n, L)    < C_1/3 \cdot \exp(-\g G
d(o, go)),
$$
for $L>L_2$. 
Hence,
\begin{equation}\label{F6}
\begin{array}{rl}
\mathbf {ASha}(g,n, \tilde \Delta)  \ge C_1/3 \cdot \exp(-\g G d(o, go)) 
\end{array}
\end{equation}
for $g \in G$ and $n >>0$.

By Lemma \ref{PShadowX},  $
\mu_1(\Pi(ho)) \asymp \exp(-\g G d(o, ho))$ for any $h\in G$. Since $d(o, ho) > d(o, go) + n -\Delta$ for any $h \in
\Omega_{\tilde r,\epsilon, \tilde R}(go, n, \tilde \Delta)$, we obtain
$$
\begin{array}{rl}
\mathbf {ASha}(g, n, \tilde \Delta)  \asymp_{\tilde \Delta}  \sharp \Omega_{\tilde r, \epsilon, \tilde R}(go, n, \tilde\Delta)  \cdot
\exp(-\g G n),
\end{array}
$$
yielding by      (\ref{F6}),  
$$\sharp \Omega_{\tilde r, \epsilon,\tilde R}(go, n, \Delta)  \succ_{\tilde \Delta}  \exp(-\g G n).$$
Thus, $G$ has purely exponential growth in partial cones.

\subsection{\textbf{Direction $(4)\Rightarrow (1)$}:} By Lemma \ref{Uexpdecay} (2), the DOP condition follows from  purely exponential growth for horoballs. 
We consider $g=1$ and so the (\ref{LepsilonDOP}) implies
$$
\begin{array}{rl}
\mathbf {HSha}(1,n, L)\succ      \sum\limits_{j\ge 2L}^n \mathcal{A}_{V}(o_V, j, \Delta)
\end{array}
$$ for any $L>L_0$ and $V\in \mathbb U$. By Lemma \ref{DOP}, the DOP condition is verified, as $n\to \infty$. 

The proof of the theorem is completed.



\bibliographystyle{amsplain}
 \bibliography{bibliography}

\end{document}